\newcommand{\ignore}[1]{}
\newcommand{\bs}[1]{\boldsymbol{#1}}
\newcommand{\R}{{\mathbb R}}
\newcommand{\Q}{{\mathbb P}}
\newcommand{\Z}{{\mathbf Z}}
\newcommand{\ES}{{\mathbf S}}
\newcommand{\CC}{\mathcal C}
\newcommand{\Fix}{\mathrm{Fix}}
\newcommand{\ONE}{{\mathbf 1}}
\newcommand{\Na}{{N_{\rm a}}}
\newcommand{\No}{{N_{\rm o}}}
\newcommand{\Zz}{\bs{Z}}
\newcommand{\Gg}{\bs{G}}
\newcommand{\Vc}{V_{\rm c}}
\newcommand{\Vd}{V_{\rm d}}
\newcommand{\Wdd}{V_{\rm dl}}
\newcommand{\Wd}{V_{\rm s}}
\newcommand{\cc}{c_{\rm c}}
\newcommand{\cd}{c_{\rm d}}
\newcommand{\cdd}{c_{\rm dl}}
\newcommand{\cdl}{c_{\rm s}}
\newcommand{\Vdl}{V_{\rm i}}
\newcommand{\Vp}{V_{\rm p}}
\newcommand{\talpha}{\tilde\alpha}
\newcommand{\tgamma}{\tilde\gamma}
\newcommand{\tbeta}{\tilde\beta}
\newcommand{\tdelta}{\tilde\delta}
\newcommand{\tcc}{\tilde c_{\rm c}}
\newcommand{\tcd}{\tilde c_{\rm d}}
\newcommand{\tcdd}{\tilde c_{\rm dl}}
\newcommand{\tcdl}{\tilde c_{\rm s}}
\newcommand{\beqn}{\begin{eqnarray*}}
\newcommand{\eeqn}{\end{eqnarray*}}
\crefname{hypothesis}{Hypothesis}{Hypotheses}
\newcommand{\Matrix}[1]{\ensuremath{\left[\begin{array}{ccccccccccccccccccccccccr} #1 \end{array}\right]}}
\newcommand{\Matrixr}[1]{\ensuremath{\left[\begin{array}{rrrrrrrrrrrrrrrrrrrrrrrrrrrrrrrrrrr} #1 \end{array}\right]}}
\newcommand{\GG}{{\mathcal G}}
 \newcommand{\DD}{\mathrm{D}}
 \newcommand{\KK}{{\mathcal K}}
 \newcommand{\id}{\mathrm{id}\,}
\title{Breaking indecision \\ in multi-agent multi-option dynamics\thanks{Submitted \today.
\funding{This research has been supported in part by NSF grant CMMI-1635056, ONR grant N00014-19-1-2556, ARO grant W911NF-18-1-0325, DGAPA-UNAM PAPIIT grant IN102420, and Conacyt grant A1-S-10610. This material is also based upon work supported by the National Science Foundation Graduate Research Fellowship under Grant No. (NSF grant number).}}}
\author{Alessio Franci\thanks{Department of Mathematics, National Autonomous University of Mexico, 04510 Mexico City, Mexico. (\email{afranci@ciencias.unam.mx}, \url{https://sites.google.com/site/francialessioac/})}
\and Martin Golubitsky\thanks{Department of Mathematics, Ohio State University, Columbus, OH, 43210-1174 USA.
  (\email{golubitsky.4@osu.edu}).}
 \and Ian Stewart\thanks{Mathematics Institute, University of Warwick, Coventry, CV4 7AL, UK.\\
  (\email{i.n.stewart@warwick.ac.uk}).} 
\and Anastasia~Bizyaeva\thanks{Department 	of Mechanical and Aerospace Engineering, Princeton University, Princeton,
	NJ, 08544 USA. (\email{bizyaeva@princeton.edu}).}
\and Naomi Ehrich Leonard\thanks{Department
	of Mechanical and Aerospace Engineering, Princeton University, Princeton,
	NJ, 08544 USA.(\email{naomi@princeton.edu}).}
}
\date{\today}
\begin{document}

\maketitle

\begin{abstract}

How does a group of agents break indecision when deciding about options with qualities that are hard to distinguish? Biological and artificial multi-agent systems, from honeybees and bird flocks to bacteria, robots, and humans, often need to overcome indecision when choosing among options in situations in which the performance or even the survival of the group are at stake. Breaking indecision is also important because in a fully indecisive state agents are not biased toward any specific option and therefore the agent group is maximally sensitive and prone to adapt to inputs and changes in its environment. Here, we develop a mathematical theory to study how decisions arise from the breaking of indecision. Our approach is grounded in both equivariant and network bifurcation theory. We model decision from indecision as synchrony-breaking in influence networks in which each node is the value assigned by an agent to an option. First, we show that three universal decision behaviors, namely, deadlock, consensus, and dissensus, are the generic outcomes of synchrony-breaking bifurcations from a fully synchronous state of indecision in influence networks.
Second, we show that all deadlock and consensus value patterns and some dissensus value patterns are predicted by the symmetry of the influence networks. Third, we show that there are also many `exotic'  dissensus value patterns. These patterns are predicted by network architecture, but not by network symmetries, through a new {\it synchrony-breaking branching lemma}. This is the first example of exotic solutions in an application.
Numerical simulations of a novel influence network model illustrate our theoretical results.

\end{abstract}

\begin{keywords}
decision making, symmetry-breaking, synchrony-breaking, opinion dynamics
\end{keywords}

\begin{AMS}
  91D30, 37G40, 37Nxx
\end{AMS}

\renewcommand{\Na}{m}
\renewcommand{\No}{n}

\section{Motivation}

Many multi-agent biological and artificial systems routinely make collective decisions. That is, they make decisions about a set of possible options through group interactions and without a central ruler or a predetermined hierarchy between the agents. Often they do so without clear evidence about which options are better. In other words, they make decisions from indecision. Failure to do so can have detrimental consequences for the group performance, fitness, or even survival.

Honeybee swarms recurrently look for a new home and make a selection when a quorum is reached~\cite[Seeley]{S97},\cite[Visscher]{K07}. Scouting bees evaluate the quality of the candidate nest site and recruit other bees at the swarm through the `waggle dance'~\cite[Seeley {\it et al.}]{SCS91}. It was shown in~\cite[Seeley {\it et al.}]{SKSHFM12} that honeybees use a cross-inhibitory signal and this allows them to break indecision when a pair of nest sites have near-equal quality (see also  \cite[Gray {\it et al.}]{GFSL18}, \cite[Pais {\it et al.}]{PHSFLM13}).

Animal groups on the move, like fish 
schools~\cite[Couzin {\it et al.}]{CIDGTHCLL11} or bird flocks ~\cite[Biro {\it et al.}]{BSMG06}, face similar conundrums when making navigational decisions, for instance, when there are navigational options with similar qualities or when the group is divided in their information about the options. However, when the navigational options appear sufficiently well separated in space, the group breaks indecision through social influence and makes a consensus 
navigational decision \cite[Couzin {\it et al.}]{CKFL05}, 
\cite[Leonard {\it et al.}]{LSNSCL12}, \cite[Sridhar {\it et al.}]{SLGNSBSGC21},\cite[Nabet {\it et al.}]{NLCL09}.

In the two biological examples above, the multi-agent system needs to make a {\it consensus} decision to adapt to the environment. But there are situations in which different members of the same biological swarm choose different options to increase the group fitness; that is, the group makes a {\it dissensus} decision.

This is the case of phenotypic differentiation in communities of isogenic social unicellular organisms, like 
bacteria \cite[Miller and Bassler]{MB01}, \cite[Waters and Bassler]{WB05}. In response to environmental cues, like starvation, temperature changes, or the presence of molecules in the environment, bacterial communities are able to differentiate into multiple cell types through molecular quorum sensing mechanisms mediated by autoinducers, as in {\it Bacillus subtilis} \cite[Shank and Kolter]{SK11} and {\it Myxococcus xanthus} \cite[Dworkin and Kaiser]{DK85} communities. In all 
those cases, the indecision about which cells should differentiate into which functional phenotype is broken through 
molecular social influence. In some models of sympatric speciation, which can be considered
as special cases of the networks studied here, coarse-grained sets of organisms
differentiate phenotypically by evolving different strategies \cite[Cohen and Stewart]{CS00}, 
\cite[Stewart {\it et al.}]{SEC03}. See Section~\ref{S:SCB}.

Artificial multi-agent systems, such as robot swarms, must make the same type of consensus and dissensus decisions as those faced by their biological counterparts. Two fundamental collective behaviors of a robot swarm are indeed collective decision making, in the form of either consensus achievement or task allocation, and coordinated navigation \cite[Brambilla {\it et al.}]{BFBD13}. Task allocation in a robot swarm is a form of dissensus decision making \cite[Franci {\it et al.}]{FBPL21} akin to phenotypic differentiation in bacterial communities. The idea of taking inspiration from biology to design robot swarm behaviors has a long history \cite[Kriger {\it et al.}]{KBK00},  \cite[Labella {\it et al.}]{LDD06}.

Human groups are also often faced with decision making among near equal-quality alternatives and need to avoid indecision, from deciding what to eat to deciding on policies that affect climate change~\cite{UN21}.

The mathematical modeling of opinion formation through social influence in non-hierarchical groups dates back to 
the De\-Groot linear averaging model \cite[DeGroot]{D74} and has since then evolved in a myriad of different models to reproduce different types of opinion formation behaviors (see for example \cite[Friedkin and Johnsen]{FJ99}, 
\cite[Deffuant {\it et al.}]{DNAW00}, and \cite[Hegselmann and Krause]{HK02}), including polarization (see for example 
\cite[Cisneros {\it et al.}]{CCB19}, \cite[Dandekar {\it et al.}]{DGL13}), and formation of beliefs on logically interdependent topics \cite[Friedkin {\it et al.}]{FPTP16},\cite[Parsegov {\it et al.}]{PPTF16},\cite[Ye {\it et al.}]{YTLAA20}. Inspired by the model-independent theory 
in~\cite[Franci {\it et al.}]{FGBL20}
we recently introduced a general model of nonlinear opinion dynamics to understand the emergence of, and the flexible transitions between, different types of agreement and disagreement opinion formation behaviors 
\cite[Bizyaeva {\it et al.}]{BFL22}.

In this paper we ask: Is there a general mathematical framework, underlying many specific models,
 for analyzing decision from indecision? We propose one answer based on differential 
 equations with network structure, and apply methods from symmetric dynamics, network dynamics,
 and bifurcation theory to deduce some general principles and facilitate the analysis
 of specific models.

The state space for such models is a rectangular array of agent-assigned option values, a `decision state', introduced in Section~\ref{S:influ_net}. Three important types of decision states are introduced: consensus, deadlock, dissensus.
Section~\ref{S:SBB} describes the bifurcations that can lead to the three types of decision states through indecision (synchrony) breaking from fully synchronous equilibria.
The decision-theoretic notion of an influence network, and of value dynamics over an influence network, together with the associated admissible maps  defining differential equations
that respect the network structure, are described in Section~\ref{S:network}, as are their symmetry groups and irreducible representations of those groups.  Section~\ref{S:PVD} briefly describes the interpretation of a distinguished parameter in the value dynamics equations from a decision-theoretic perspective. 
The trivial fully synchronous solution and the linearized admissible maps are discussed in Section~\ref{S:Deadlock_linearization}. 
It is noteworthy that the four distinct eigenvalues of the linearization are determined by symmetry.  This leads 
 to the three different kinds of synchrony-breaking steady-state bifurcation in the influence network. Consensus and deadlock synchrony-breaking bifurcation are studied in Section~\ref{S:DLB}.  Both of these bifurcations are 
 $\ES_N$ bifurcations that have been well studied in the symmetry-breaking context and are identical to the 
corresponding synchrony-breaking bifurcations. Section~\ref{S:axial} proves the Synchrony-Breaking Branching Lemma 
(Theorem~\ref{T:simp_eigen_reg}) showing the generic existence of solution branches for all `axial' balanced colorings, a key concept in homogeneous networks that is defined later.
This theorem is used in Section~\ref{S:diss_bif} to study dissensus synchrony-breaking bifurcations and prove the generic existence of `exotic' solutions that can be found using synchrony-breaking techniques but not using symmetry-breaking techniques. Section~\ref{S:SBSB} provides further discussion of exotic states. 
Simulations showing that axial solutions can be stable (though not necessarily near bifurcation) are
presented in Sections~\ref{S:simulation} and \ref{S:simulation_dissensus}. Additional discussion
of stability and instability of states is given in Sections~\ref{SS:stability_dissensus} and \ref{SS:stability_balanced}.

\section{Decision making through influence networks and value pattern formation}
\label{S:influ_net}

We consider a set of $\Na\geq 2$ identical agents who form valued preferences  
about a set of $\No\geq 2$ identical options. By `identical' we mean that
all agents process in the same way the valuations made by other agents,
and all options are treated in the same manner by all agents. These conditions are
formalized in Section~\ref{S:network} in terms of the network structure and symmetries of model equations.

When the option values are {\it a priori} 
unclear, ambiguous, or simply unknown, agents must compare and evaluate the 
various alternatives, both on their own and jointly with the other agents. They do 
so through an {\em influence network}, whose connections describe
mutual influences between valuations made by all agents.
We assume that
the value assigned by a given agent to a given option evolves over time, depending
on how all agents value all options. This dynamic is modeled as
a system of ordinary differential equations (ODE) whose formulas reflect
the network structure. Technically, this ODE is `admissible' \cite[Stewart {\it et al.}]{SGP03} for the network. 
See \eqref{EQ: generic value evol} and \eqref{EQ: admissible vf}.

In this paper we analyze the structure (and stability, when possible) of model ODE steady states.
Although agents and options are identical,
options need not be valued equally by all agents at steady state.
The uniform synchronous state, where all options are equally valued, can lose stability. This leads to spontaneous synchrony-breaking and the creation of patterns of values. 
Our aim is to exhibit a class of patterns, which we call `axial,' that can be proved to occur 
via bifurcation from a fully synchronous state for typical models.

\subsection{Synchrony-breaking versus symmetry-breaking}
\label{S:SBSB}

Synchrony-breaking is an approach to classify the bifurcating steady-state solutions of dynamical systems on homogeneous networks. It is analogous to the successful approach of spontaneous symmetry-break\-ing \cite[Golubitsky and Stewart]{GS02}, \cite[Golubitsky {\it et al.}]{GSS88}.  In its 
simplest form, symmetry-breaking addresses the following question. Given a symmetry group $\Gamma$ acting 
on $\R^n$, a stable $\Gamma$-symmetric equilibrium $x_0$, and a parameter $\lambda$, what are the possible symmetries 
of steady states that bifurcate from $x_0$ when $x_0(\lambda)$ loses stability at $\lambda=\lambda_0$? A technique that 
partially answers this question is the Equivariant Branching Lemma (see~\cite[XIII, Theorem~3.3]{GSS88}).  This 
result was first observed by \cite[Vanderbauwhede]{V82} and \cite[Cicogna]{Ci81}. The Equivariant Branching Lemma proves the existence of a 
branch of equilibria for each axial subgroup $\Sigma\subseteq\Gamma$. It applies whenever an axial subgroup exists.

Here we prove an analogous theorem for synchrony-breaking in the network context. Given a homogeneous 
network $\mathcal N$ with a fully synchronous stable equilibrium $x_0$ and a parameter $\lambda$, 
we ask what kinds of patterns of 
synchrony bifurcate from $x_0(\lambda)$ when the equilibrium loses stability at $\lambda_0$.  A technique that 
partially answers this question is the Synchrony-Breaking Branching Lemma  \cite[Golubitsky and Stewart]{GS22} (reproved here 
in Section~\ref{S:axial}). This theorem
proves the existence of a branch of equilibria for each `axial' subspace 
$\Delta_{\bowtie}$.
It applies whenever an axial subspace exists.

The decision theory models considered in this paper are both homogeneous (all nodes are of the same kind) and symmetric (nodes are interchangeable).  Moreover, the 
maximally symmetric states (states fixed by all symmetries) are the same as the fully synchronous states and 
it can be shown that for influence networks the same critical eigenspaces (where bifurcations occur)
are generic in either context; namely, the absolutely irreducible representations. 
However, the generic branching behavior on the critical eigenspaces
can be different for network-admissible ODEs compared to equivariant ODEs. This occurs because network structure imposes extra constraints compared to symmetry alone and therefore network-admissible ODEs are in general a subset of equivariant ODEs. Here 
it turns out that this difference occurs for dissensus bifurcations but not for deadlock or consensus bifurcations.
Every axial subgroup $\Sigma$ leads to an axial subspace $\Delta_{\bowtie}$, but the converse need not be 
true.  We call axial states that can be obtained by symmetry-breaking {\em orbital states} and axial states that 
can be obtained by the Synchrony-Breaking Branching Lemma, but not by the Equivariant Branching Lemma, 
{\em exotic states}.  We show that dissensus axial exotic states often exist when $m,n\geq 4$. These states 
are the first examples of this phenomenon that are known to appear in an application.

\subsection{Terminology}

The {\em value} that agent $i$ assigns to option $j$ is represented by a real number $z_{ij}$
which may be positive, negative or zero. 
A {\em state} of the influence network is a rectangular array of values $Z=(z_{ij})$
where $1 \leq i \leq \Na$ and $1 \leq j \leq \No$. 
The assumptions that agents/options
are identical, as explained at the start of this section, implies
that any pair of entries $z_{ij}$ and $z_{kl}$ can be compared and that this comparison is meaningful.

{\bf Decision states}: A state where each row consists of equal values is a {\em deadlock} state (Fig~\ref{fig:consensus illust}, right). Agent $i$ is 
{\em deadlocked} if row $i$ consists of equal values. A state where each column 
consists of equal values is a {\em consensus} state (Fig~\ref{fig:consensus illust}, left).  There is {\em consensus about option} $j$ if column $j$ 
consists of equal values. A state that is neither deadlock nor consensus is {\em dissensus} (Fig.~\ref{fig:dissensus illust ab}). A state that is both deadlock and consensus is a state of {\it indecision}. An indecision state is {\it fully synchronous}, that is, in such a state all values are equal.

{\bf Value patterns}: We discuss value patterns that are likely to form by synchrony-breaking 
bifurcation from a fully synchronous steady state.   Each bifurcating state forms a pattern of value 
assignments. It is convenient to visualize this pattern by assigning a color
to each numerical value that occurs. Nodes $z_{ij}$ and $z_{kl}$ of the array $Z$
have the same color if and only if $z_{ij} = z_{kl}$.  Nodes with the same value, hence same color, are {\em synchronous}.  Examples of value patterns are shown in Figures~\ref{fig:consensus illust}-\ref{fig:dissensus illust c}. Agent and option numbering are omitted when not expressly needed.

%

\subsection{Sample bifurcation results}

Because agents and options are independently interchangeable,
 the influence network has 
symmetry group $\Gamma = \ES_m\times\ES_n$ and admissible systems of ODEs are  
$\Gamma$-equivariant.  Equivariant bifurcation theory shows that admissible systems can exhibit 
three kinds of symmetry-breaking or synchrony-breaking bifurcation from a fully synchronous equilibrium;
see Section~\ref{SS:3symm_breaking}.  Moreover, axial patterns associated with these bifurcation types lead 
respectively to consensus (\S\ref{case1}, Theorem~\ref{T:consensus_bifur}), 
deadlock (\S\ref{case2}, Theorem~\ref{T:divided_deadlock_bifur}),
or dissensus (\S\ref{case3}, Theorem~\ref{T:diss}) states. 
Representative examples (up to renumbering agents and options) are shown
 in Figures~\ref{fig:consensus illust} and \ref{fig:dissensus illust ab}.

\begin{figure}[htb]
	\centering
	\includegraphics[width=0.95\textwidth]{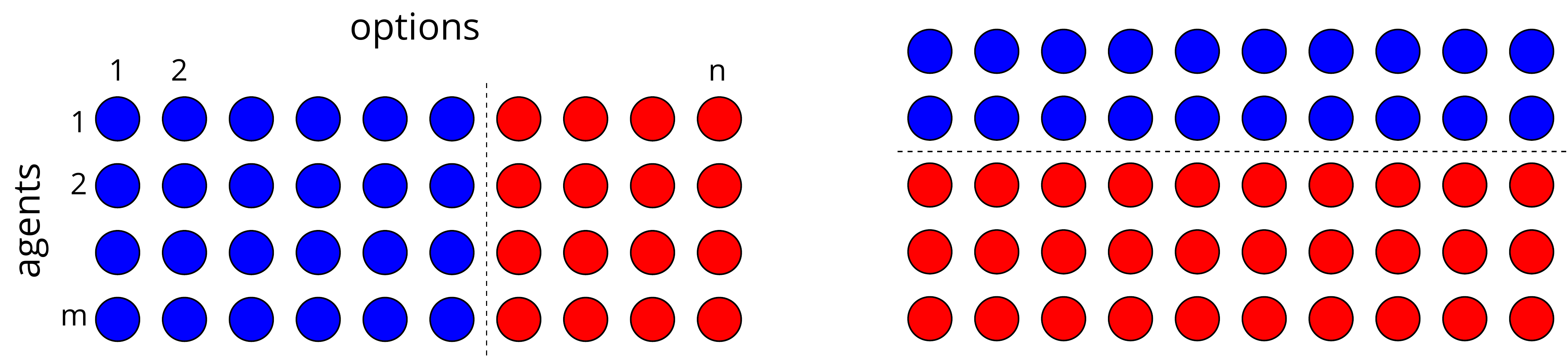}
	\caption{(Left) Consensus value pattern. (Right) Deadlock value pattern.}\label{fig:consensus illust}
\end{figure}

\begin{figure}[htb]
	\centering
	\includegraphics[width=0.9\textwidth]{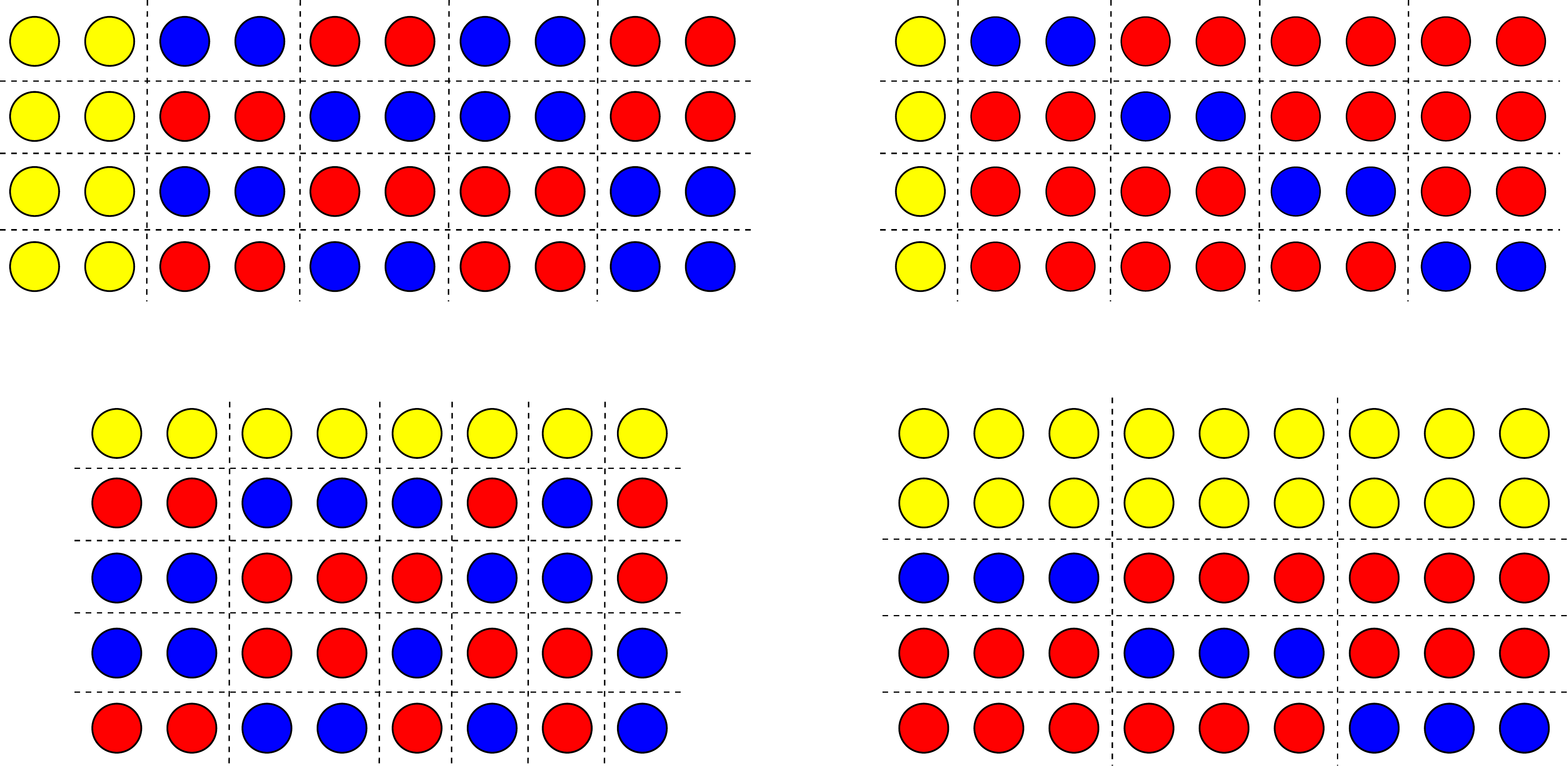}
	\caption{Dissensus value patterns corresponding to \S\ref{case3a} (top) and \S\ref{case3b} (bottom). }\label{fig:dissensus illust ab}
\end{figure}

The different types of pattern are characterized by the following properties: zero row-sums, zero column-sums, equality of rows, and equality of columns.
	
A bifurcation branch has {\em zero row-sum} (ZRS) if along the branch each row sum is constant to linear order 
in the bifurcation parameter. 
A bifurcation branch has {\em zero column-sum} (ZCS) if along the branch each column sum is constant to linear order 
in the bifurcation parameter. 

Thus, along a ZRS branch, an agent places higher values on some options and lower values on other options, but in such a way that the average value remains constant. In particular, along a ZRS branch, an agent forms preferences for the various options, e.g., favoring some and disfavoring others. Similarly, along a ZCS branch, an option is valued higher by some agents and lower by some others, but in such a way that the average value remains approximately constant. In particular, 
along a ZCS branch, an option is favored by some agents and disfavored by others.

Consensus  bifurcation branches are ZRS with all rows equal, so all agents favor/disfavor the same options. Deadlock bifurcation branches are ZCS with all columns equal, so each agent equally favors or disfavors every option. Dissensus bifurcation branches are ZRS and ZCS with some unequal rows and some unequal columns, so there is no consensus on favored/disfavored options.

ZRS and ZCS conditions  are discussed in more detail
in Section~\ref{SSEC: irreps} using the irreducible representations of $\Gamma$.

\subsection{Value-assignment clusters}

To describe these patterns informally in decision theoretic language, we first define:

\begin{definition} \rm
	Two agents are in the same {\em agent-cluster} if their rows are equal. Two options are in the same {\em option-cluster} if their columns are equal.
\end{definition}

In other words, two agents in the same agent-cluster agree on each option value and two options 
are in the same option-cluster if they are equally valued by each agent.
An example is given in Figure~\ref{F:color_example_rect}.

\begin{figure}[htb]
	\centerline{
		\includegraphics[width=.27\textwidth]{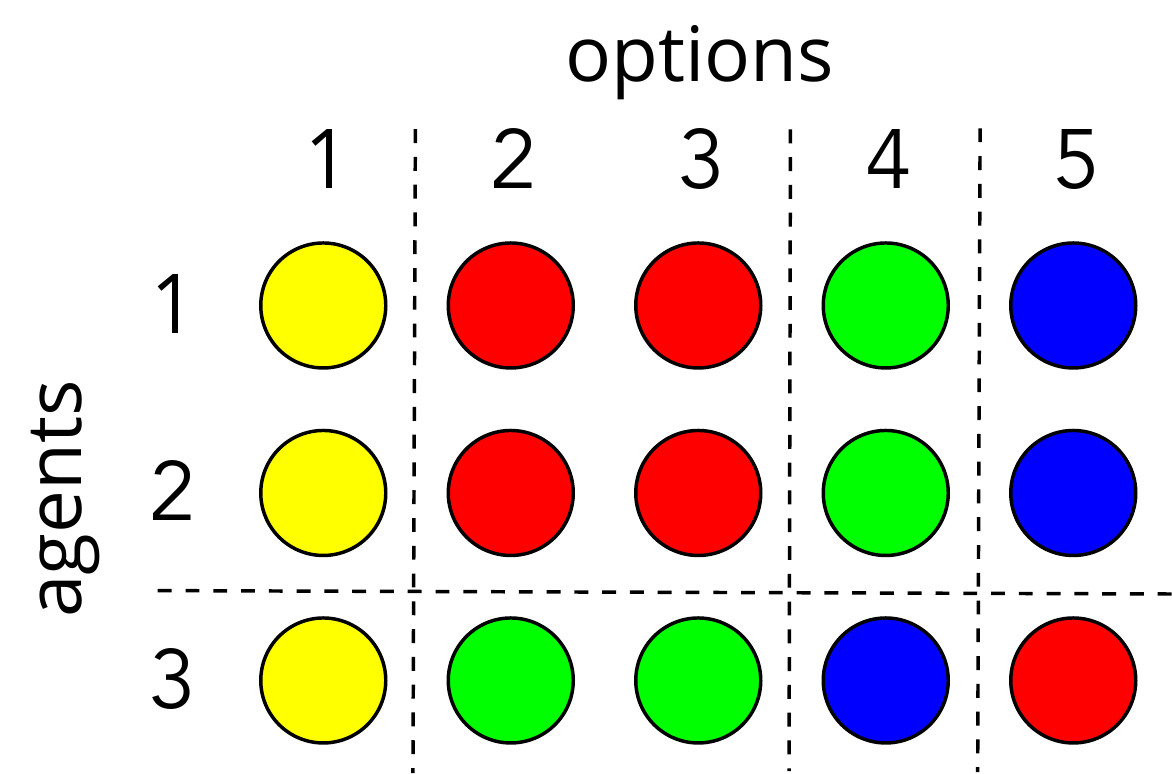}}
\caption{The agent-clusters in this value pattern are $\{1,2\}$, $\{3\}$ and the 
option-clusters are $\{1\}, \{2,3\}, \{4\}, \{5\}$.}
	\label{F:color_example_rect}
\end{figure}

\subsection{Color-isomorphism and color-complementarity}

The following definitions help characterize value patterns. Definition~\ref{D: color isom} (color-isomorphism) relates two agents by an option permutation and two options by an agent permutation, thus preserving the number of nodes of a given color in the associated rows/columns. Definition~\ref{D: color comp} (color-complementarity) relates two agents/options by a color permutation, therefore not necessarily preserving the number of nodes of a given color in the associated 
rows/columns.

\begin{definition}\label{D: color isom} \rm
	Two agents (options) are {\em color-isomorphic} if the row (column) pattern of one can be transformed into the row (column) pattern of the other by option (agent) permutation.
\end{definition}

\begin{figure}
	\centering
	\includegraphics[width=0.6\textwidth]{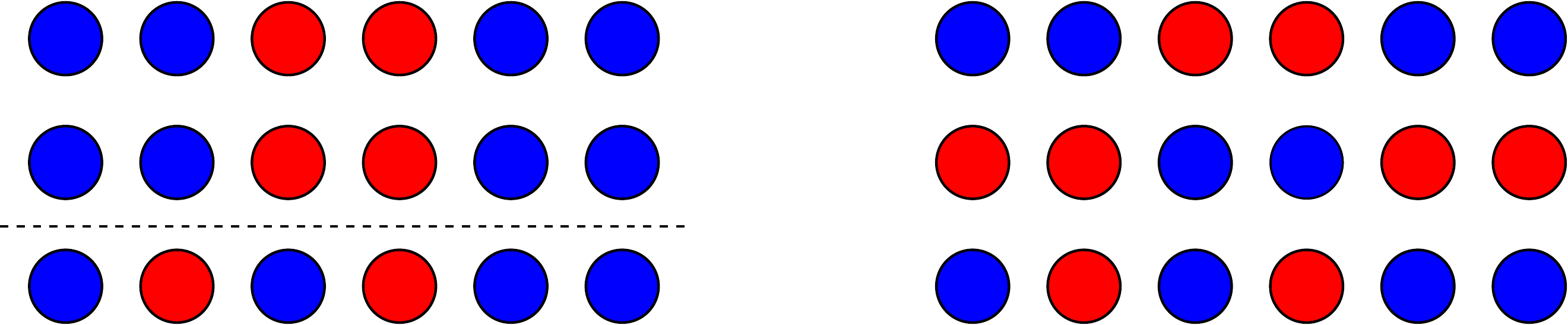}
	\caption{	(Left) \{1, 2\}, \{3\} are agent clusters; agents 1, 2, 3 are color-isomorphic;  none of the agents are color complementary. (Right) \{1\}, \{2\},\{3\} are agent clusters; agents 1 and 3  are color isomorphic; agents 1 and 2 are color complementary.}
	\label{fig:cluster vs isom vs compl}
\end{figure}

\begin{definition}\label{D: color comp}\rm 
 Two agents (options) are {\em color-complementary} if the row (column) pattern of one can be transformed into the row (column) pattern of the other by a color permutation.
\end{definition}

Examples of agent-clusters, color-isomorphism, and color-complementarity are given 
in Figure~\ref{fig:cluster vs isom vs compl}.

Non-trivial color-isomorphism and color-complementarity characterize {\it disagreement} value patterns. Two color-isomorphic agents not belonging to the same agent-cluster disagree on the value of the permuted options. Two color-isomorphic options not belonging to the same option-cluster are such that the permuted agents disagree on their value. Similarly, two color-complementary agents not belonging to the same agent-cluster disagree on options whose colors are permuted. Two color-complementary options not belonging to the same option-cluster are such that the agents whose colors were permuted disagree on those option values.

\section{Indecision-breaking as synchrony breaking from full synchrony}
\label{S:SBB}

We study patterns of values in the context of
{\em bifurcations}, in which the solutions of a parametrized family of ODEs change qualitatively as a parameter $\lambda$ varies. Specifically, we consider
{\em synchrony-breaking} bifurcations, in which a branch of fully synchronous
steady states ($z_{ij} = z_{kl}$ for all $i,j,k,l$) becomes dynamically unstable,
leading to a branch (or branches) of states with a more complex value pattern. The fully 
synchronous state is one in which the group of agents expresses 
no preferences about the options. In such a state it is impossible for the single agent and for the group as a whole to coherently decide which options are best. The fully synchronous state therefore correspond to indecision; an agent group in such a state is called {\it undecided}.
	
Characterizing indecision is important because any possible value pattern configuration can be realized as an infinitesimal perturbation of the fully synchronous state. In other words, any decision can be made from indecision. Here, we show how exchanging opinions about option values through an influence network leads to generic paths from indecision to decision through synchrony-breaking bifurcations.


The condition for steady-state bifurcation is that the linearization $J$ 
of the ODE at the bifurcation
point has at least one eigenvalue equal to $0$. The corresponding eigenspace is
called the {\em critical eigenspace}.
In~\eqref{Irrep_decomp} we show that in the present
context there are three possible critical eigenspaces for generic synchrony-breaking.
The bifurcating steady states are described in Theorems~\ref{T:consensus_bifur}, \ref{T:divided_deadlock_bifur}, \ref{T:diss}. 
For each of these cases, bifurcation theory for networks leads to a list of `axial'
patterns --  for each we give a decision-making theoretic interpretation in Sections~\ref{case1}--\ref{case3}.
These informal descriptions are made precise by the mathematical statements
of the main results in Sections~\ref{S:DLB} and \ref{S:diss_bif}.

In the value patterns in Figures~\ref{fig:consensus illust},~\ref{fig:dissensus illust ab},~\ref{fig:dissensus illust c}, we use the convention that the value of red nodes is small, the value of blue nodes is high, and the value of yellow nodes is between that of red and blue nodes. Blue nodes correspond to favored options, red nodes to disfavored options, and yellow nodes to options about which an agent remains neutral. That is, to 
linear order yellow nodes assign to that option the same value as before synchrony is broken.
Color saturation of red and blue encodes for deviation from neutrality, where appropriate.

\subsection{Consensus bifurcation}
\label{case1}

All agents assign the same value to any given option. There is one
agent-cluster and two option-clusters. All values for a given option-cluster
are equal and different from the values of the other cluster. Options from different option-clusters are not color-isomorphic but are color-complementary. One option cluster is made of favored options and the other option cluster of disfavored options. In other words, there is a group consensus about which options are favored. See Figure~\ref{fig:consensus illust}, left, for a typical consensus value pattern.  The sum of the $z_{ij}$ along each row is zero to leading order in $\lambda$.

\subsection{Deadlock bifurcation}
\label{case2}

All options are assigned the same value by any given agent. There is one
option-cluster and two agent-clusters. All values for a given agent-cluster
are equal and different from the values of the other cluster. Agents from different agent-clusters are not color-isomorphic but are color-complementary. One agent cluster favors all the options and the other agent cluster disfavor all options. Each agent is deadlocked about the options and the group is divided about the overall options' value. See Figure~\ref{fig:consensus illust}, right, for a typical deadlock value pattern.  The sum of the $z_{ij}$ along each column is zero to leading order in $\lambda$.

\subsection{Dissensus bifurcation}
\label{case3}

This case is more complex, and splits into three subcases.

\subsubsection{Dissensus with color-isomorphic agents}
\label{case3a}

Each agent assigns the same high value to a subset of favored options, the same low value to a subset of disfavored options, and, possibly, remains neutral about a third subset of options.  All agents are color-isomorphic but there are at least two distinct agent-clusters. Therefore, there is no consensus about which options are (equally) favored.

There are at least two distinct option-clusters with color-isomorphic elements. Each agent expresses preferences about the elements of each option cluster by assigning the same high value to some of them and the same low value to some others. There might be a further option-cluster made of options about which all agents remain neutral.

There might be pairs of color-complementary agent and option clusters.  See Figure~\ref{fig:dissensus illust ab}, top for typical dissensus value patterns of this kind with (left) and without (right) color-complementary pairs.

\subsubsection{Dissensus with color-isomorphic options}
\label{case3b}

This situation is similar to \S\ref{case3a}, but now all options are color-isomorphic and there are no options about which all agents remain neutral. There might be an agent-cluster of neutral agents, who assign the same mid-range value to all options. See Figure~\ref{fig:dissensus illust ab}, bottom for typical dissensus value patterns of this kind with (left) and without (right) color-complementary pairs.

\subsubsection{Polarized dissensus}
\label{case3c}

There are two option-clusters and two agent-clus\-ters. For each combination of one option-cluster and one agent-cluster, all values are identical. Members of distinct agent-clusters are color-isomorphic and color-comple\-ment\-ary if and only if  members of distinct option-clusters are color-isomorphic and color-complementary. Equivalently, if and only if the number of elements in each agent-cluster is the same and the number of elements in each option-cluster is the same.

In other words, agents are polarized into two clusters. Each cluster has complete consensus on the option values, and places the same value on all options within a given option-cluster. The second agent-cluster does the same, but disagrees with the first agent-cluster on all values. See Figure~\ref{fig:dissensus illust c} for typical polarized dissensus value patterns without (left) and with (right) color-isomorphism and color-complementarity.

\begin{figure}
	\centering
	\includegraphics[width=0.9\textwidth]{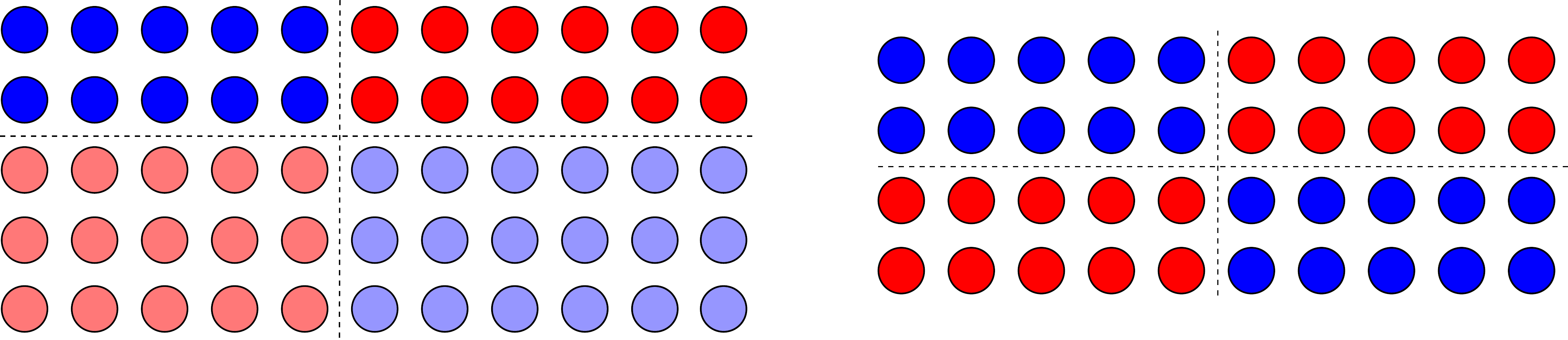}
	\caption{Examples of polarized dissensus value patterns (\S\ref{case3c}).}\label{fig:dissensus illust c}
\end{figure}

\section{Influence Networks}
\label{S:network}

We now describe the mathematical context in more detail and make the previous
informal descriptions precise.

The main modeling assumption is that
 the value $z_{ij}\in\R$ assigned by agent $i$ to option $j$ evolves continuously in time. 
We focus on dynamics that evolve towards equilibrium, and describe generic
 equilibrium synchrony patterns.
  Each variable $z_{ij}$ is associated to a {node $(i,j)$} in an {\it influence network}. 
 The set of nodes of the influence network is the rectangular array 
\[
\CC=\{(i,j):1\leq i \leq \Na,1\leq  j \leq \No\}.
\]
{We assume that all} nodes are {identical;} they are represented by the same symbol {in} Figure~\ref{FIG: influence network} (left). {Arrows} between the nodes {indicate influence; that is,} an arrow from node $(k,l)$ to node $(i,j)$ means that the value assigned by agent $i$ to {option} $j$ is influenced by the value assigned by agent $k$ to option $l$.

\subsection{Arrow types}

We assume that there are three different types of interaction between nodes,
determined by different types of arrow in the network, represented graphically
by different styles of arrow, Figure~\ref{FIG: influence network} (left). 
The arrow types are distinguished by their tails:
\begin{itemize}
	
	\item (Row arrows) The {\it intra-agent, inter-option influence neighbors} of node $(i,j)$ are the $\No - 1$ 
	nodes in the subset 
	\[
	{\sf A}_{i,j} = \{(i,1),\ldots, (i,\No)\} \setminus \{(i,j)\}.
	\]
	Arrows whose head is $(i,j)$ and whose tail is in ${\sf A}_{i,j}$ have the same arrow type represented by 
	a gray dashed arrow. In Figure~\ref{FIG: influence network} (right) these arrows connect all distinct nodes in the same 
	row in an all-to-all manner.
	
	\item (Column arrows) The {\it inter-agent, intra-option influence} neighbors of node $(i,j)$ are the $\Na - 1$ nodes in the subset 
	\[
	{\sf O}_{ij} = \{(1,j),\ldots, (\Na, j)\} \setminus \{(i,j)\}.
	\]
	Arrows whose head is $(i,j)$ and whose tail is in ${\sf O}_{ij}$ have the same arrow type represented 
	by a solid black arrow. In Figure~\ref{FIG: influence network} (right) these arrows connect all distinct nodes in the same 
	column in an all-to-all manner.

	\item (Diagonal arrows) The {\it inter-agent, inter-option influence} neighbors of node $(i,j)$ are 
	the $\Na\No - \Na -\No + 1$ nodes in the subset 
	\[
	{\sf E}_{ij} = \{(k,l): k\neq i, l\neq j \}
	\]
	Arrows whose head is $(i,j)$ and whose tail is in ${\sf E}_{ij}$ have the same arrow type represented by  a 
	black dashed arrow.
	In Figure~\ref{FIG: influence network} (right) these arrows connect all distinct nodes that do not lie in the same row or in the same column in an all-to-all manner.
 \end{itemize}
 
 We denote an $m \times n$ influence network by $\mathcal{N}_{mn}$.
The network $\mathcal{N}_{mn}$ is {\em homogeneous} (all nodes have isomorphic
sets of input arrows) and {\em all-to-all coupled} or {\em all-to-all connected} (any two distinct nodes
appear as the head and tail, respectively, of some arrow).

Other assumptions on arrow or node types are possible, 
to reflect different modeling assumptions, but are not discussed in this paper.

\begin{figure}
	\centering
	\includegraphics[width=0.65\textwidth]{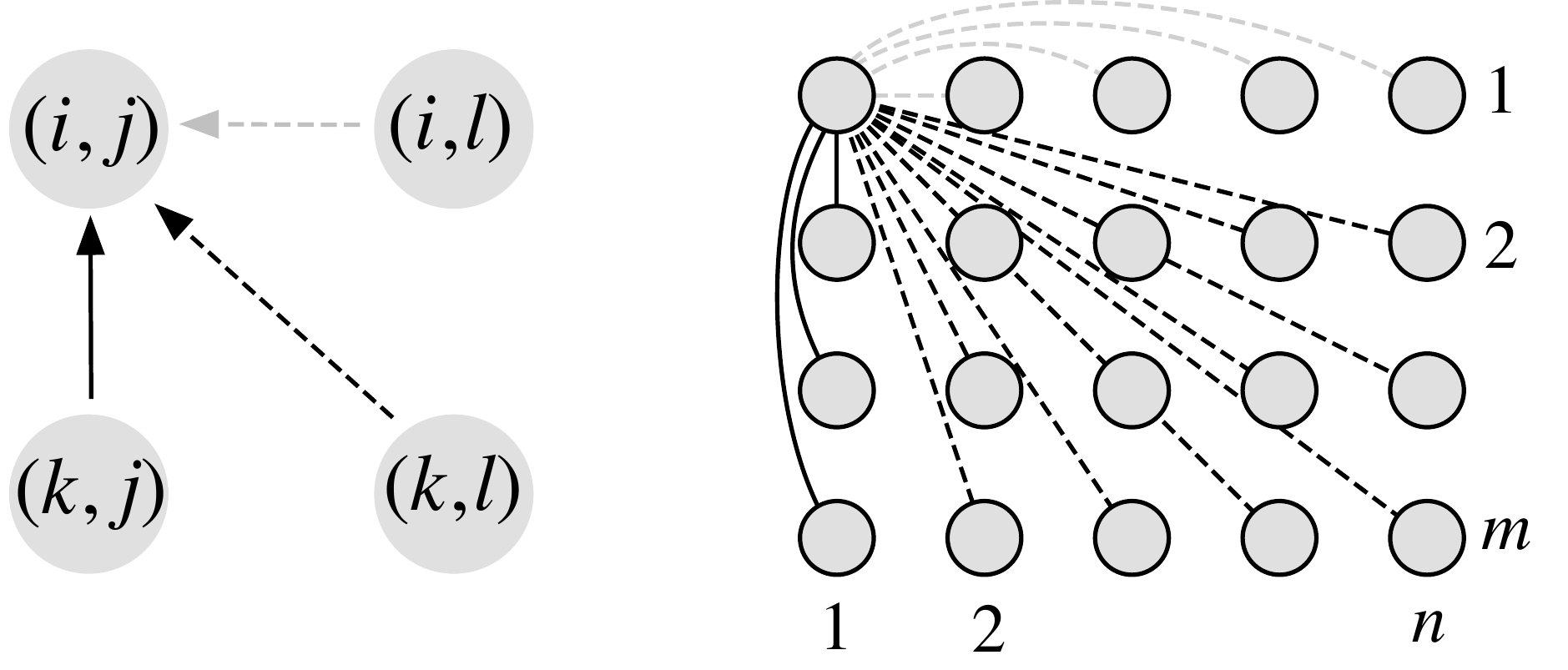}
	\caption{An $m \times n$ influence network $\mathcal{N}_{mn}$ has three distinct arrow types: gray dashed row arrow, solid black column arrow, and black dashed diagonal arrow. (Left) Inputs to
	node $(i,j)$. (Right) Arrows inputting to node $(1,1)$ (arrowheads omitted). There is a similar set of arrows inputting to every other node.}
	\label{FIG: influence network}
\end{figure}

\subsection{Symmetries of the influence network}
\label{S:SIN}

The influence network $\mathcal{N}_{mn}$ is symmetric: its {\em automorphism group}, the set of
 permutations of the set of nodes $\CC$ that preserve node and arrow types and incidence relations
 between nodes and arrows, is non-trivial. In particular, swapping any two rows
 or any two columns
  in Figure~\ref{FIG: influence network} (right) leaves the network structure unchanged. 
It is straightforward to prove that $\mathcal{N}_{mn}$ has symmetry group 
\[
\Gamma = \ES_\Na\times\ES_\No
\]
where $\ES_\Na$ swaps rows (agents) and $\ES_\No$ swaps columns (options). More precisely, 
$\sigma\in\ES_{\Na}$ and $\tau\in\ES_{\No}$ act on $\CC$ by
\[
(\sigma,\tau) (i,j) = (\sigma(i), \tau(j)).
\]

The symmetries of the influence network can be interpreted 
as follows. In the decision making process, agents and options are {\it a priori} 
indistinguishable. In other words, all the agents count the same and have the 
same influence on the decision making process, and all the options are initially equally 
valuable. This is the type of decision-making situation we wish to model and 
whose dynamics we wish to understand.

\subsection{Admissible maps}

A point in the {\em state space} (or {\em phase space}) $\Q$ is an $\Na\times\No$ rectangular array of real numbers $z = (z_{ij})$. 
The action of $\Gamma$ on points $z = (z_{ij})\in\Q$ is defined by
\[
(\sigma,\tau)z = ( z_{ \sigma^{-1}(i)\tau^{-1}(j)}). 
\]

Let 
\[
\Gg:\Q\to\Q
\]
be a smooth map $\Gg =(G_{ij})$ on $\Q$, so that each component
 $G_{ij}$ is smooth and real-valued.
We assume that the value $z_{ij}$ assigned by agent $i$  to option $j$ evolves 
according to the {\it value dynamics}
\begin{equation}\label{EQ: generic value evol}
\dot z_{ij} = G_{ij}(z).
\end{equation}
In other word, we model the evolution of the values assigned by the agents to the different options as a system of ordinary differential equations (ODE) on the state space $\Q$.

The map $\Gg$ for the influence network is assumed to be 
{\it admissible}~\cite[Definition~4.1]{SGP03}. 
Roughly speaking, admissibility means that the functional dependence of the map $\Gg$ on the network variable $z$ respects the network structure. For instance, if two nodes $(k_1,l_1),(k_2,l_2)$ input a third one $(i,j)$ through the same arrow type, then $\Gg_{ij}(z)$ depends identically on $z_{k_1l_1}$ and $z_{k_2l_2}$, in the sense that the value of $\Gg_{ij}(z)$ does not change if $z_{k_1l_1}$ and $z_{k_2l_2}$ are swapped in the vector $z$.
Our assumptions about
 the influence network select a family of admissible maps that can be analyzed from a network perspective to predict generic model-independent decision-making behaviors.

Because the influence network has symmetry $\ES_m\times\ES_n$ and there are three arrow types, by~\cite[Remark~4.2]{SGP03}, the components of the admissible
map $\Gg$ in~\eqref{EQ: generic value evol} satisfy
\begin{equation}\label{EQ: admissible vf}
G_{ij}(z)=G(z_{ij}, \overline{z_{\sf{A_{ij}}}}, \overline{z_{\sf{O_{ij}}}}, \overline{z_{\sf{E_{ij}}}})
\end{equation}
where the function $G$ is independent of $i,j$ and the notation $\overline{z_{\sf{A_{ij}}}}$, $\overline{z_{\sf{O_{ij}}}}$, and $\overline{z_{\sf{E_{ij}}}}$ means 
that $G$ is invariant under all permutations of the arguments appearing under each overbar.
That is, each arrow type leads to identical interactions.

It is well known~\cite{AS07,GS22} that the symmetry of $\mathcal{N}_{mn}$ implies that
any admissible map $\Gg$ is $\Gamma$-equivariant, that is,
\[
\gamma\Gg(z)=\Gg(\gamma z)
\]
for all $\gamma\in\Gamma$. It is straightforward to verify directly that equations 
 \eqref{EQ: admissible vf} are $\Gamma$-equivariant. 
 Given $\gamma=(\sigma,\tau)\in\Gamma$, with
$\sigma\in\ES_\Na$ and $\tau\in\ES_\No$,
\[
\begin{array}{rcl}
	((\sigma,\tau)\Gg)_{ij}(z)& = & \!G_{\sigma^{-1}(i)\, \tau^{-1}\!(j)}(z) \\ & &  \\
	 & = &  G(z_{\sigma^{-1}(i)\, \tau^{-1}(j)}, \overline{z_{\sf{A_{\sigma^{-1}(i)\,\tau^{-1}\!(j)}}}},
	 \overline{z_{\sf{O_{\sigma^{-1}(i)\,\tau^{-1}(j)}}}}, \overline{z_{\sf{E_{\sigma^{-1}(i)\,\tau^{-1}(j)}}}})\\
	 & & \\&= & G_{ij}((\sigma,\tau)z).
\end{array}
\]

\subsection{Solutions arise as group orbits}
\label{S:sol_orbits}
An important consequence of group equivariance is that solutions
of any $\Gamma$-equivariant ODE arise in group orbits. That is,
if $z(t)$ is a solution and $\gamma \in \Gamma$ then $\gamma z(t)$
is also a solution. Moreover, the type of solution (steady, periodic, chaotic)
is the same for both, and they have the same stabilities. Such solutions
are often said to be {\em conjugate} under $\Gamma$.

In an influence network, conjugacy has a simple interpretation. 
If some synchrony pattern can occur as an equilibrium, then so can all related patterns
in which agents and options are permuted in any manner. 
Effectively,
we can permute the numbers $1, \ldots, m$ of agents and $1, \ldots, n$ of
options without affecting the possible dynamics. This is reasonable
because the system's behavior ought not to depend on the choice of numbering.

Moreover, if one
of these states is stable, so are all the others. This phenomenon is a direct
consequence of the initial assumption that all agents are identical and all options 
are identical. Which state among the set of conjugate ones occurs, in
any specific case, depends on the initial conditions for the equation.
In particular, whenever we assert the existence of a pattern as an equilibrium, 
we are also asserting the existence of all conjugate patterns as equilibria.

\subsection{Equivariance versus admissibility}
 
If a network has symmetry group $\Gamma$
then every admissible map is $\Gamma$-equivariant, but the converse is
 generally false. In other words, admissible maps for the influence network are more constrained than $\Gamma$-equivariant maps. As a consequence, bifurcation phenomena that are not generic in equivariant maps (because not sufficiently constrained) become generic in admissible maps (because the extra constraints provide extra structure).\footnote{As an elementary analogue of this phenomenon, consider the class of real-valued maps $f(x,\lambda)$ and the class of odd-symmetric real-valued maps $g(x,\lambda)=-g(-x,\lambda)$. The first class clearly contains the second, which is more constrained (by symmetry). Whereas the pitchfork bifurcation is non-generic in the class of real valued maps, it becomes generic in the odd-symmetric class.}
 In practice, this means that equivariant bifurcation theory applied to influence networks might miss some of the bifurcating solutions, i.e., those that are generic in admissible maps but not in equivariant maps.
 Section~\ref{S:EP} illustrates one way in which this can happen. Other differences between bifurcations in equivariant and admissible maps are discussed in detail in~\cite[Golubitsky and Stewart]{GS22}.
 
 For influence networks, we have the following results: 
  
 \begin{theorem}
\label{T:equiv_not_admiss}
The equivariant maps for $\ES_m\times\ES_n$ are the same as the admissible maps
for ${\mathcal N}_{mn}$ if and only if $(m,n) = (m,1), (1,n), (2,2), (2,3), (3,2)$.
\end{theorem}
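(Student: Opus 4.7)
The proof plan is to recast the two classes of maps in invariant-ring language and then compare them case by case. Any $\ES_m \times \ES_n$-equivariant map $\Gg$ is determined by its first coordinate $G_{11}$, which must be invariant under the stabilizer subgroup $H = \ES_{m-1} \times \ES_{n-1}$ of the node $(1,1)$, acting by permuting rows $\{2,\ldots,m\}$ and columns $\{2,\ldots,n\}$ with paired action on the A-, O-, and E-input sets. Admissibility, by contrast, requires $G_{11}$ to be invariant under the larger product group $K = \ES_{n-1} \times \ES_{m-1} \times \ES_{(m-1)(n-1)}$, which permutes each input set \emph{independently}. Because $H \subseteq K$, every admissible map is $\ES_m \times \ES_n$-equivariant, and the content of the theorem reduces to determining when the invariant rings satisfy $R^H = R^K$.

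For the ``if'' direction I would verify equality in each listed case. The degenerate cases $(m,1)$ and $(1,n)$ are immediate: two of the three input sets are empty, and both $H$ and $K$ collapse to a single symmetric group on the remaining input set. For $(2,2)$ each input set is a singleton and both $H$ and $K$ are trivial. The two remaining listed cases $(2,3)$ and $(3,2)$ are more delicate: here $|A| = |E| = 2$ and $H$ is a single $\ES_2$ acting \emph{diagonally} on A and E, whereas $K = \ES_2 \times \ES_2$ acts on them independently. For these I would verify equality of invariant rings by an explicit computation on generators, combining the elementary symmetric polynomials in A and in E separately with the mixed generators, and showing that the very low cardinality of the ``extra'' orbit forces every $H$-invariant polynomial to admit a rewriting in terms of $K$-invariants.

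For the ``only if'' direction, I would exhibit a concrete equivariant-but-not-admissible polynomial whenever $(m,n)$ lies outside the listed set. In the bulk regime $m, n \geq 3$, the polynomial
\[
\phi(z) \;=\; \sum_{i=2}^m \Bigl(\sum_{j=2}^n z_{ij}\Bigr)^{\!2}
\]
is evidently $H$-invariant, since row and column permutations of the E-block preserve the sum of squares of row-sums, whereas the admissibility group $\ES_{(m-1)(n-1)}$ on the E-set contains, e.g., the transposition exchanging $z_{22}$ and $z_{33}$, which changes $\phi$. In the boundary regime where one dimension equals $2$ and the other is at least $4$, say $(2,n)$ with $n\geq 4$, the polynomial $\psi(z) = \sum_{j=2}^n z_{1j}\, z_{2j}$ is $H$-invariant (the coupled column-swap preserves it) but fails the independent $\ES_{n-1}$-swap on the A-set demanded by admissibility; the symmetric construction handles $(m,2)$ with $m\geq 4$. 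Defining the remaining components $G_{ij}$ by propagating along the group orbit yields a fully $\Gamma$-equivariant vector field that is not admissible.

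The step most deserving care --- and the main obstacle --- is the invariant-ring computation for the exceptional listed cases $(2,3)$ and $(3,2)$. The natural diagonal invariant $a_1 e_1 + a_2 e_2$ does not obviously lie in $R^K$, so the delicate point is to identify the specific algebraic identities, presumably forced by the minimal cardinality $(m-1)(n-1)=2$ together with the concurrent equality $|A|=|E|$, that allow such mixed invariants to be realized through $K$-invariant combinations of network-admissible generators and thereby close the argument.
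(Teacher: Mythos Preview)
The paper does not prove this theorem internally; it simply defers to Theorem~3 of \cite{S21}, so there is no in-paper argument against which to compare your approach. Your reduction to the comparison of invariant rings $R^H$ versus $R^K$, with $H=\ES_{m-1}\times\ES_{n-1}$ the point stabilizer and $K=\ES_{n-1}\times\ES_{m-1}\times\ES_{(m-1)(n-1)}$ the vertex group acting independently on the three input sets, is the standard and correct framing, and your explicit counterexamples for the ``only if'' direction are valid.

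However, the obstacle you flag at $(2,3)$ and $(3,2)$ is not a delicate computation awaiting the right identity: it is a genuine obstruction within your framework. The diagonal polynomial $a_1e_1+a_2e_2$ is visibly not invariant under the independent transposition $a_1\leftrightarrow a_2$ in $K$, and since every element of $R^K$ \emph{is} invariant under that transposition, no algebraic rewriting can place it there. A dimension count confirms this: for $(m,n)=(2,3)$ the quadratic $H$-invariants form a $13$-dimensional space while the quadratic $K$-invariants span only $12$ dimensions. Equivalently, your own boundary counterexample $\psi=\sum_{j\ge 2} z_{1j}z_{2j}$ already separates $R^H$ from $R^K$ at $n=3$, not merely for $n\ge 4$. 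Hence, with $\mathcal{N}_{mn}$ and admissibility exactly as defined in this paper, equivariant and admissible maps are \emph{not} equal at $(2,3)$. Either the cited reference works with a different arrow-type convention or an augmented admissibility notion that collapses $H$ and $K$ in those small cases, or the list as transcribed here contains an error; you should consult \cite{S21} directly rather than search for identities that cannot exist.
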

\begin{proof}
See~\cite[Theorem 3]{S21}.
\end{proof}

In contrast, the linear case is much better behaved:

\begin{theorem}
\label{T:ln_equiv_admiss}
The linear equivariant maps for $\ES_m\times\ES_n$ are the same as the linear
admissible maps
for ${\mathcal N}_{mn}$ for all $m,n$.
\end{theorem}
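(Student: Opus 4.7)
The strategy is a dimension count on both sides. The inclusion of linear admissible maps into linear $\Gamma$-equivariant maps is automatic by the general argument of \S\ref{S:network}, so it suffices to show that both spaces have the same real dimension.

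First I would describe the space of linear admissible maps explicitly. By the admissibility formula \eqref{EQ: admissible vf}, linearization of $G$ at any fully synchronous point shows that every linear admissible map $\Gg: \Q \to \Q$ for $\mathcal{N}_{mn}$ has components
\[
G_{ij}(z) \;=\; \alpha\, z_{ij} \;+\; \beta \sum_{l\ne j} z_{il} \;+\; \gamma \sum_{k\ne i} z_{kj} \;+\; \delta \sum_{\substack{k\ne i\\ l\ne j}} z_{kl},
\]
with four real parameters $\alpha,\beta,\gamma,\delta$ corresponding to the internal weight and the three arrow types. These four elementary maps are visibly linearly independent (evaluate on $z = E_{11}, E_{12}, E_{21}, E_{22}$), so the space of linear admissible maps has dimension exactly $4$.

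Second, I would compute the dimension of the space of linear $\Gamma$-equivariant maps via Schur's lemma. Identify $\Q \cong \R^m \otimes \R^n$ and split $\R^m = \ONE_m \oplus W_m$ and $\R^n = \ONE_n \oplus W_n$, where $\ONE_k$ is the trivial $\ES_k$-representation and $W_k$ is the standard $(k-1)$-dimensional representation. This gives the decomposition
\[
\Q \;=\; (\ONE_m \otimes \ONE_n) \,\oplus\, (\ONE_m \otimes W_n) \,\oplus\, (W_m \otimes \ONE_n) \,\oplus\, (W_m \otimes W_n),
\]
which matches the four irreducible components implicit in \eqref{Irrep_decomp}. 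Each $W_k$ is absolutely irreducible over $\R$, the trivial rep is absolutely irreducible, and tensor products of absolutely irreducible representations of the two factors in the direct product $\ES_m \times \ES_n$ remain absolutely irreducible and pairwise non-isomorphic. By Schur's lemma, $\mathrm{End}_\Gamma$ of each of these four isotypic components equals $\R$, and there are no equivariant maps between distinct components, so the space of linear $\Gamma$-equivariant maps on $\Q$ has dimension exactly $4$.

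Third, since the space of linear admissible maps is contained in the space of linear equivariant maps and both have dimension $4$, the inclusion is an equality, proving the theorem. The only real obstacle is the Schur-lemma step: one must verify the absolute irreducibility over $\R$ of each tensor factor, but this is standard for symmetric groups (the standard representation $W_k$ is absolutely irreducible for all $k \geq 2$, and the case $k=1$ is trivial with $W_k = 0$, which simply collapses the corresponding summands and the admissible parameter count in parallel). This matches the boundary cases in Theorem~\ref{T:equiv_not_admiss} where the nonlinear equivariant and admissible classes already coincide, and extends the agreement to all $(m,n)$ in the linear setting precisely because the four-parameter family exhausts both sides.
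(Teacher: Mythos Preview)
Your proposal is correct and is essentially the same argument the paper gives (via Remark~\ref{r:admiss=equi} and \eqref{distinct_irreps}): both spaces are $4$-dimensional --- one side by counting the node type plus three arrow types, the other by decomposing $\Q$ into four non-isomorphic absolutely irreducible summands and applying Schur's lemma --- and the automatic inclusion then forces equality. You supply more detail (the explicit tensor decomposition and the degenerate cases $m=1$ or $n=1$), but the strategy is identical.
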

\begin{proof}
See~\eqref{distinct_irreps} and Remark~\ref{r:admiss=equi}.
\end{proof}

\subsection{Irreducible representations of $\Gamma$ acting on $\Q$}
\label{SSEC: irreps}

In equivariant bifurcation theory, a key role is played by the irreducible
representations of the symmetry group $\Gamma$ acting on $\Q$. 
Such a representation is {\em absolutely irreducible} if all commuting
linear maps are scalar multiples of the identity. See~\cite[Chapter XII Sections 2--3]{GSS88}.

The state space $\Q = \R^{\Na\No}$ admits a direct sum decomposition into four $\Gamma$-invariant subspaces  with a clear decision-making interpretation for each.  
These subspaces of $\Q$ are
\begin{equation} \label{distinct_irreps}
\begin{array}{ccl}
\Wd & = & \mbox{all entries equal} \quad  (\mbox{{\it fully synchronous subspace}})  \\
\Vc & = & \mbox{all rows identical with sum $0$} \quad (\mbox{{\it consensus subspace}}) \\
\Wdd & = & \mbox{all columns identical with sum $0$} \quad (\mbox{{\it deadlock subspace}}) \\
\Vd &=& \mbox{all rows and all columns have sum 0} \quad  (\mbox{{\it dissensus subspace}})\\
\end{array}
\end{equation}
whose dimensions are respectively $ 1, (\No -1)(\Na -1), \No-1, \Na -1$.  

Each subspace $ \Vd$, $\Vc$, $\Wdd$, $\Wd$ is an absolutely
irreducible representation of $\Gamma$ acting on $\Q$.  
The kernels of these representations are ${\bf 1}, \ES_\Na \times {\bf 1}, {\bf 1}
\times\ES_\No, \Gamma$, respectively.  Since the kernels are unequal 
the representations are non-isomorphic.  A dimension count shows that 
\begin{equation} \label{Irrep_decomp}
\R^{\Na\No} = \Wd \oplus \Vc \oplus \Wdd \oplus \Vd. 
\end{equation}

Two related group-theoretic decom\-po\-si\-ti\-ons are also used later. 
See Theorems~\ref{T:consensus_bifur} and \ref{T:divided_deadlock_bifur}.

\begin{equation} \label{fix_decomp}
	\begin{array}{rcl}
		\Fix(\ES_\Na \times {\bf 1}) & = & \Vc\oplus\Wd \\ 
		\Fix( {\bf 1} \times \ES_\No ) & = & \Wdd\oplus\Wd,
	\end{array}
\end{equation}
where the {\em fixed-point subspace} $\Fix(H)$ of a subgroup $H \subseteq \Gamma$ 
is the set of all $z \in \Q$ such that $\alpha z = z$ for all $\alpha \in H$.

\subsection{Value-assignment interpretation of irreducible representations}

$ $
The subspaces in decomposition~\eqref{Irrep_decomp} admit the following interpretations:
\begin{itemize}
	\item[$\Wd$] is the {\it fully synchronous} subspace, where all agents assign the same value to every option. This is the subspace of {\em undecided} states that we will assume lose stability in synchrony-breaking bifurcations.
		
	\item[$\Vc$] is the {\it consensus} subspace, where all agents express the same preferences about the options. This subspace is the critical subspace associated to synchrony-breaking bifurcations leading to consensus value patterns described in \S\ref{case1}. 
		
	\item[$\Wdd$] is the  {\it deadlock} subspace, where each agent is deadlocked and agents are divided about the option values. This subspace is the critical subspace associated to synchrony-breaking bifurcations leading to deadlock value patterns described in \S\ref{case2}. 
	
	\item[$\Vd$] is the {\it dissensus} subspace, where agents express different preferences about the options. This subspace is the critical subspace associated to synchrony-breaking bifurcations leading to dissensus value patterns 
	described in \S\ref{case3a}, \S\ref{case3b}, \S\ref{case3c}. 
\end{itemize}

\subsection{Value assignment decompositions}
\label{SSEC: preliminary decom}

We also define two additional important value-assignment subspaces:
\begin{equation} \label{preference_deadlock}
\begin{array}{cclcl}
 \Vp & = & \Vc \oplus \Vd   \quad (\mbox{\em preference subspace})  \\
\Vdl & = & \Wd \oplus \Wdd  \quad (\mbox{\em indecision subspace}) 
\end{array}
\end{equation}
By~\eqref{Irrep_decomp} and \eqref{preference_deadlock}:
\[
\R^{\Na\No} = \Vp \oplus \Vdl
\]
and each $V_*$ is $\Gamma$-invariant.

Each introduced subspace and its 
decomposition admits a value-assignment interpretation.
By \eqref{distinct_irreps} and \eqref{preference_deadlock}, points in $\Vp$ have row sums equal to $0$ and generically have entries in a row that are not all equal for at least one agent. Hence, generically, for at least one agent some row elements are maximum values over the row and the corresponding options are favored by the given agent. That is, the subspace $\Vp$ consists of points where at least some agents express preferences among the various options.
In contrast, points in $\Vdl$ have entries in a row that are all equal, so all agents are deadlocked and the group in a state of indecision. This decomposition distinguishes rows and columns,  reflecting the asymmetry between agents and options from the point of view of value assignment.

\section{Parametrized value dynamics}
\label{S:PVD}

To understand how a network of indistinguishable agents values and forms preferences among a set of {\it a priori} equally valuable options, 
we model the transition between different value states of the influence network --- for example from fully synchronous  to consensus --- as a bifurcation. 
To do so, we introduce a {\it bifurcation parameter} $\lambda\in\R$ into
 model~\eqref{EQ: generic value evol},\eqref{EQ: admissible vf}, which leads to the parametrized dynamics
\begin{subequations}\label{EQ: general param decision dynamics}
	\begin{align}
		\dot\Zz&=\Gg(\Zz,\lambda)\\
		G_{ij}(\Zz,\lambda)&=G(z_{ij}, \overline{z_{\sf{A_{ij}}}}, \overline{z_{\sf{O_{ij}}}}, \overline{z_{\sf{E_{ij}}}},\lambda)\,.
	\end{align}	
\end{subequations}
We assume that the parametrized ODE~\eqref{EQ: general param decision dynamics} is also admissible and hence $\Gamma$-equivari\-ant. That is,
\[
\gamma\Gg(\Zz,\lambda)=\Gg(\gamma\Zz,\lambda)\,.
\]

Depending on context, the bifurcation parameter $\lambda$ can model a variety of environmental or endogenous parameters affecting the valuation process. In honeybee nest site selection, $\lambda$ is related to the rate of stop-signaling, a cross-inhibitory signal that enables value-sensitive decision making~\cite[Seeley {\it et al.}]{SKSHFM12}, \cite[Pais {\it et al.}]{PHSFLM13}. In animal groups on the move, $\lambda$ is related to the geometric separation between the navigational options~\cite[Biro {\it et al.}]{BSMG06}, \cite[Couzin {\it et al.}]{CIDGTHCLL11}, \cite[Leonard {\it et al.}]{LSNSCL12}, \cite[Nabet {\it et al.}]{NLCL09}. In phenotypic differentiation, $\lambda$ is related to the concentration of quorum-sensing autoinducer molecules~\cite[Miller and Bassler]{MB01}, \cite[Waters and Bassler]{WB05}. 
In speciation models, $\lambda$ can be an environmental parameter such as food availability, \cite[Cohen and Stewart]{CS00},
\cite[Stewart {\it et al.}]{SEC03}.
In robot swarms, $\lambda$ is related to the communication strength between the robots \cite[Franci {\it et al.}]{FBPL21}. In human groups, $\lambda$ is related to the attention paid to others~\cite[Bizyaeva {\it et al.}]{BFL22}.

\section{The undecided solution and its linearization}
\label{S:Deadlock_linearization}

It is well known that equivariant maps leave fixed-point subspaces invariant \cite[Golubitsky {\it et al.}]{GSS88}.  Since admissible 
maps $\Gg$ are $\Gamma$-equivariant, it follows that the $1$-dimensional undecided (fully synchronous) subspace 
$\Wd=\Fix(\Gamma)$ is flow-invariant; that is, invariant under the flow of {\em any} admissible ODE.   Let $v\in \Wd$ be nonzero.  Then for $x\in\R$ we can write 
\[
\Gg(xv,\lambda) = g(x,\lambda)v
\]
where $g:\R\times\R\to\R$. Undecided solutions are found by solving $g(x,\lambda) = 0$. Suppose that $g(x_0,\lambda_0) = 0$. We analyze synchrony-breaking bifurcations
from a path of solutions to $g(x,\lambda) = 0$ bifurcating at $(x_0,\lambda_0)$.

\begin{remark}
\label{r:admiss=equi}
In general, any linear admissible map is linear equivariant. In influence networks $\mathcal{N}_{mn}$,
all linear equivariant maps are admissible. More precisely,
 the space of linear admissible maps 
is $4$-dimensional (there are three arrow types and one node type). The space of
linear equivariants is also $4$-dimensional (there are four distinct absolutely irreducible representations \eqref{Irrep_decomp}). Therefore linear equivariant maps are the same
as linear admissible maps. This justifies applying equivariant methods in
the network context to find the generic critical eigenspaces.
\end{remark}

We now consider syn\-chro\-ny-breaking bifurcations.  
Since the four irreducible subspaces in \eqref{Irrep_decomp} are non-isomorphic
and absolutely irreducible, Remark~\ref{r:admiss=equi} implies that the 
Jacobian $J$ of \eqref{EQ: general param decision dynamics} at a fully synchronous equilibrium 
$(x_0v,\lambda_0)$ has up to four distinct real eigenvalues. Moreover, it is possible to find 
admissible ODEs so that any of these eigenvalues is $0$ while the others are negative.  
In short, there are four types of steady-state bifurcation from a fully synchronous equilibrium and each can be the first bifurcation.  When the critical eigenvector is in $\Wd$, the generic bifurcation is a saddle-node of fully synchronous states. The other three bifurcations correspond to the other three irreducible representations in \eqref{Irrep_decomp} and are synchrony-breaking.

\subsection{The four eigenvalues and their value-assignment interpretation}
\label{SS:3symm_breaking}

A simple calculation reveals that 
\begin{equation}
	\begin{array}{ccl}
		J|_{\Vd} & = & \cd I_{(\Na-1)(\No-1)}\\
		J|_{\Vc} & = & \cc  I_{\No-1}\\
		J|_{\Wdd} & = & \cdd  I_{\Na-1}\\
		J|_{\Wd} & = & \cdl \\
	\end{array}
\end{equation}
where
\begin{equation}\label{EQ: deadlocked jac eigvals}
	\begin{array}{ccl}
		\cd & = & \alpha-\beta-\gamma+\delta\\
		\cc & = & \alpha-\beta+(\Na-1)(\gamma-\delta)\\
		\cdd & = & \alpha-\gamma+(\No-1)(\beta-\delta)\\
		\cdl & = & \alpha+(\No-1)\beta+(\Na-1)\gamma+(\Na-1)(\No-1)\delta
	\end{array}
\end{equation}
and 
\[
\alpha=\frac{\partial G_{ij}}{\partial z_{ij}},\quad \beta=\frac{\partial G_{ij}}{\partial z_{il}},\quad
\gamma=\frac{\partial G_{ij}}{\partial z_{kj}},\quad \delta=\frac{\partial G_{ij}}{\partial z_{kl}}.
\]
Here $1\leq i,k \leq\Na$ (with $k\neq i$); $1 \leq  j,l \leq \No$ (with $l\neq j$); and all partial derivatives are evaluated at the fully synchronous equilibrium $(x_0v,\lambda_0)$.  The parameters
 $\beta,\gamma,\delta$ are the linearized weights of row, column, and diagonal arrows, respectively,
 and $\alpha$ is the linearized internal dynamics.

We can write \eqref{EQ: deadlocked jac eigvals} in matrix form as
\[
\Matrix{ \cd \\ \cc \\ \cdd \\ \cdl} = 
\Matrix{ 1 & -1 & -1 & 1 \\ 
1 & -1 & \Na - 1 & 1 - \Na \\ 
1 & \No - 1 &  - 1 & 1 - \No \\
1 & \No - 1 & \Na - 1 & (\Na - 1)( \No - 1)}
\Matrix{ \alpha \\ \beta \\ \gamma \\ \delta} \equiv L\Matrix{ \alpha \\ \beta \\ \gamma \\ \delta}
\]
Since $\det(L) = -\Na^2\No^2 \neq 0$ the matrix $L$ is invertible. Therefore
any of the three synchrony-breaking bifurcations can occur as the first bifurcation 
by making an appropriate choice of the partial derivatives $\alpha,\beta,\gamma,\delta$ of $\Gg$.

For example, synchrony-breaking dissensus bifurcation ($\cd = 0$) can occur from a stable deadlocked equilibrium ($\cc,\cdd,\cdl < 0$) if we choose $\alpha,\beta,\gamma,\delta$ by 
\[
\Matrix{ \alpha \\ \beta \\ \gamma \\ \delta} = L^{-1} \Matrixr{ 0 \\ -1 \\ -1 \\ -1} 
\]

\section{$\ES_n$ consensus and $\ES_m$ deadlock bifurcations}
\label{S:DLB}

We summarized synchrony-breaking consensus and deadlock bifurcations in Sections~\ref{case1} and~\ref{case2}.
We now revisit these bifurcations with more mathematical detail.
Both types of bifurcation reduce mathematically to equivariant bifurcation for the natural
permutation action of $\ES_m$ or $\ES_n$. This is
not immediately clear because of potential network constraints, but
in these cases it is easy to prove~\cite{AS07} that the
admissible maps (nonlinear as well as linear)
are the same as the equivariant maps. In Section~\ref{S:diss_bif} 
we see that the same comment does not apply to dissensus bifurcations.

The Equivariant Branching Lemma~\cite[XIII, Theorem~3.3]{GSS88}
states that generically, for every axial subgroup $\Sigma$, there exists a branch of steady-state 
solutions with symmetry $\Sigma$. The branching occurs from a trivial (group invariant) equilibrium.  
More precisely, let $\Gamma$ be a group acting on $\R^n$ and let 
\begin{equation} \label{bifurcation_family}
\dot{x} = f(x,\lambda)
\end{equation} 
where $f$ is $\Gamma$-equivariant, $f(x_0,\lambda_0) = 0$ (so $x_0$ is a trivial 
solution; that is, $\Gamma$ fixes $x_0$), and $x_0$ is a point of steady-state bifurcation.  
That is, if $J$ is the Jacobian of $f$ at $x_0$, then $K=\ker(J)$ is nonzero.  Generically, $K$ is an 
absolutely irreducible representation of $\Gamma$.  A subgroup $\Sigma\subset\Gamma$ 
is {\em axial relative to} a subspace $K\subset\Q$ if $\Sigma$ is an isotropy subgroup of $\Gamma$ acting 
on $K$ such that 
\[
\dim\Fix_K(\Sigma) = 1,
\] 
where the fixed-point subspace $\Fix_K(H)$ of a subgroup $H\subset\Gamma$ {\em relative to} $K$ is the set of all $z\in K$ such that $\alpha z=z$ for all $\alpha\in H$.
Suppose that a $\Gamma$-equivariant map has a $\Gamma$-invariant equilibrium at $x_0$ and that the kernel of the Jacobian at $x_0$ is an absolutely irreducible subspace $V$. Then generically, for each axial subgroup $\Sigma$ of $\Gamma$ acting on $V$, a branch of equilibria 
with symmetry $\Sigma$ bifurcates from $x_0$. Therefore all conjugate branches also occur,
as discussed in Section~\ref{S:sol_orbits}.

In principle there could be other branches of equilibria~\cite[Field and Richardson]{FR89} and other 
interesting dynamics~\cite[Guckenheimer and Holmes]{GH88}. For example,  \cite[Dias and Stewart]{DS03} consider
secondary bifurcations to $\ONE \times(\ES_p\times\ES_q\times\ES_{n-p-q})$, 
which are not axial.
We focus on the equilibria
given by the Equivariant Branching Lemma because these are known
to exist generically.

\subsection{Balanced colorings, quotient networks, and ODE-equivalence}
The ideas of `balanced' coloring and `quotient network' were introduced in \cite[Golubitsky {\it et al.}]{GST05}, \cite[Stewart {\it et al.}]{SGP03}.  
See also \cite[Golubitsky and Stewart]{GS06,GS22}.

Let $z$ be a network node and let $I(z)$ be the {\em input} set of $z$ consisting of all arrows whose head node is $z$.
Suppose that the nodes are colored. The coloring is {\em balanced} if any two nodes $z_1, z_2$ 
with the same color have {\em color isomorphic} input sets.  That is, there is a bijection $\sigma:I(z_1)\to I(z_2)$ 
such that the tail nodes of $a\in I(z_1)$ and $\sigma(a)\in I(z_2)$ have the same color.  

It is shown in the above references that when the coloring is balanced the subspace $Q$, where two node values are equal whenever the nodes
have the same color, is a flow-invariant subspace for every admissible vector field. The subspaces $Q$ are
the network analogs of fixed-point subspaces of equivariant vector fields.  Finally, it is also shown that $Q$  
is the phase space for a network whose nodes are the colors in the balanced coloring.  This
network is the {\em quotient network} (Figure~\ref{FIG: influence network_N23}, right). Through identification of same-color nodes in Figure~\ref{FIG: influence network_N23}, center, the quotient network exhibits self-coupling and two different arrows (dashed gray and dashed black) between pairs of nodes in such a way that the number of arrows between colors is preserved. For example, each red node of the original network receives a solid arrow from a red node, a dashed gray arrow and a dashed black arrow from a white node, and a dashed gray arrow and a dashed black arrow from a blue node, both in Figure~\ref{FIG: influence network_N23}, center, and in Figure~\ref{FIG: influence network_N23}, right. Similarly, for the other colors.

Two networks with the same number of nodes are {\em ODE-equivalent} if they have the same spaces of 
admissible vector fields.  \cite[Dias and Stewart]{DS05} show that two networks are ODE-equivalent if and 
only if they have the same spaces of linear admissible maps.  It is straightforward to check that the
networks in Figures~\ref{FIG: influence network_N23} (right) and \ref{FIG: influence network_N23_quot} 
have the same spaces of linear admissible maps, hence are ODE-equivalent. Therefore the bifurcation 
types from fully synchronous equilibria are identical in these networks.  These two ODE-equivalent 
networks, based on the influence network $\mathcal N_{23}$, can help to illustrate the bifurcation result in 
Theorem~\ref{T:consensus_bifur}.

\begin{figure}
	\centerline{
	\includegraphics[width=0.22\textwidth]{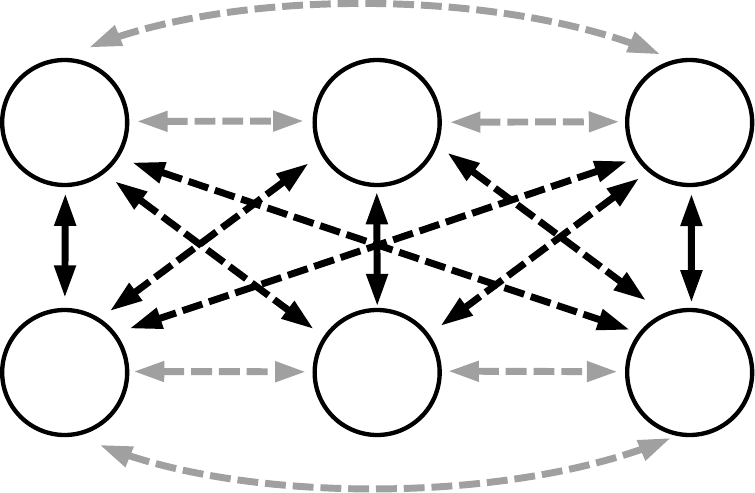}\qquad
	\includegraphics[width=0.22\textwidth]{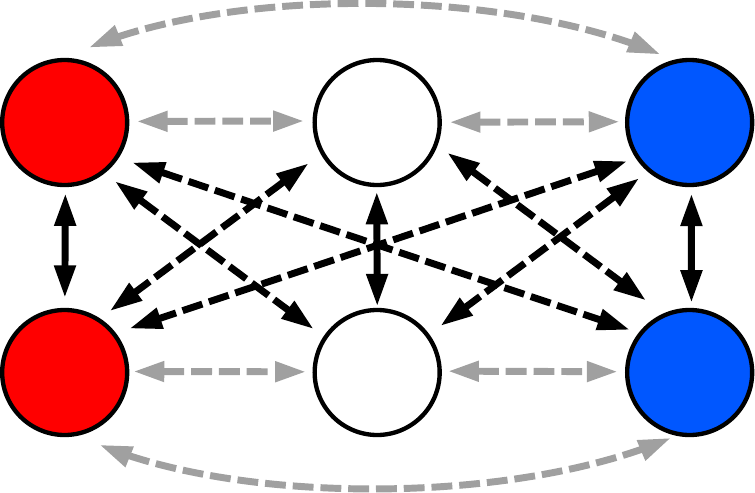}\qquad
	\includegraphics[width=0.22\textwidth]{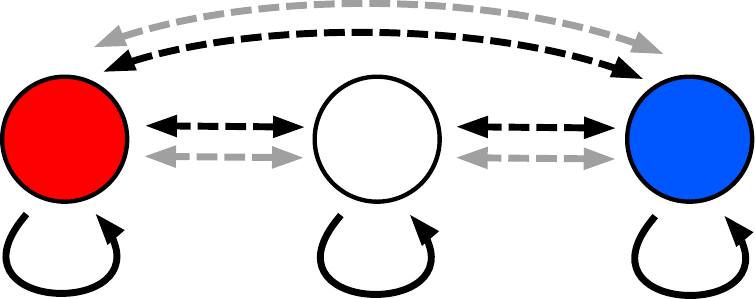}}
	\caption{(Left) The influence network $\mathcal{N}_{23}$ has three distinct arrow types: gray dashed row arrow, solid black column arrow, and black dashed diagonal arrow.
		(Middle) Balanced three coloring given by $\Fix(1\times \ES_2)$. (Right) Three-node quotient network $G_{\rm c}$ of (middle).
	 }
	\label{FIG: influence network_N23}
\end{figure}

\begin{figure}
	\centerline{
	\includegraphics[width=0.22\textwidth]{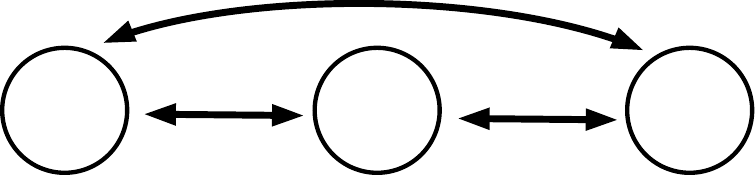}}
	\caption{Three-node quotient network with $\ES_3$-symmetry that is ODE-equivalent to $G_{\rm c}$ 
	illustrated in Figure~\ref{FIG: influence network_N23} (right).}
	\label{FIG: influence network_N23_quot}
\end{figure}

\subsection{Consensus bifurcation ($\cc = 0$)}
\label{SS:consensus}

Branches of equilibria stemming from consensus bifurcation can be proved to exist 
using the Equivariant Branching Lemma applied to a suitable quotient network.
The branches can be summarized as follows:

\begin{theorem} \label{T:consensus_bifur}
Generically, there is a branch of equilibria corresponding to the axial subgroup 
${\bf 1} \times (\ES_k\times \ES_{\No - k})$ for all $1 \leq k\leq \No-1$.  
These solution branches are tangent to $\Vc$, lie in the subspace 
$\Fix(\ES_{\Na}\times\ONE)=\Vc\oplus\Wd$, and are consensus solutions.  
\end{theorem}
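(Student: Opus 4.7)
The strategy is to reduce consensus bifurcation to a classical $\ES_{\No}$-equivariant bifurcation problem via restriction to the fixed-point subspace $\Fix(\ES_{\Na}\times\ONE)=\Vc\oplus\Wd$, and then apply the Equivariant Branching Lemma. When $\cc=0$ with $\cd,\cdd,\cdl$ nonzero, the critical eigenspace is $\Vc$, which by~\eqref{distinct_irreps} is absolutely irreducible under $\Gamma$ with kernel $\ES_{\Na}\times\ONE$. Consequently the effective action on $\Vc$ is that of $\ES_{\No}$ on its standard $(\No-1)$-dimensional permutation irreducible representation $\{r\in\R^{\No}:\sum_j r_j=0\}$.

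Next I would exploit the fact that $\Vc\oplus\Wd=\Fix(\ES_{\Na}\times\ONE)$ is flow-invariant for every admissible ODE and restrict the parametrized family~\eqref{EQ: general param decision dynamics} to it. The restricted family consists precisely of the admissible maps for the quotient network $G_{\rm c}$ obtained by identifying agents (illustrated for $\mathcal{N}_{23}$ in Figure~\ref{FIG: influence network_N23}). Since $G_{\rm c}$ is ODE-equivalent to a network with $\ES_{\No}$ symmetry (Figure~\ref{FIG: influence network_N23_quot}) in which admissibility and equivariance coincide, the space of restricted admissible maps equals the space of $\ES_{\No}$-equivariant smooth maps on $\Vc\oplus\Wd$.

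I would then apply the Equivariant Branching Lemma to the restricted $\ES_{\No}$-equivariant problem. The axial subgroups of $\ES_{\No}$ acting on its standard irreducible representation are classical: for each $1\le k\le \No-1$, the subgroup $\ES_k\times\ES_{\No-k}$ fixes the one-dimensional subspace spanned by a vector with $k$ equal entries of one value and $\No-k$ equal entries of another, subject to the zero-sum constraint. Generically, the lemma produces a branch of equilibria for each such axial subgroup, tangent to $\Vc$ at the bifurcation point. Lifting back to $\Q$, each branch lies in $\Fix(\ES_{\Na}\times\ONE)=\Vc\oplus\Wd$ and has isotropy containing $\ES_{\Na}\times(\ES_k\times\ES_{\No-k})$; the $\ES_{\Na}$ factor forces all rows to be equal, so the branches are consensus states.

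The main subtlety is the second step: a priori, admissible maps on $\Q$ form a strictly smaller class than $\Gamma$-equivariant maps (Theorem~\ref{T:equiv_not_admiss}), so without further argument a generic admissible family need not correspond to a generic equivariant family, and the Equivariant Branching Lemma would not immediately apply in the network context. The ODE-equivalence of $G_{\rm c}$ with an $\ES_{\No}$-symmetric network, together with the result of~\cite{AS07}, supplies the needed equality of function classes on the flow-invariant subspace and removes this concern.
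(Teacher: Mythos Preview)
Your proposal is correct and follows essentially the same route as the paper: restrict to the flow-invariant subspace $\Fix(\ES_{\Na}\times\ONE)=\Vc\oplus\Wd$, identify this with the quotient network $G_{\rm c}$, use ODE-equivalence with the $\ES_{\No}$-symmetric simplex $\GG_{\No}$ to conclude that restricted admissible maps coincide with $\ES_{\No}$-equivariant maps, and then apply the Equivariant Branching Lemma with the classical axial subgroups $\ES_k\times\ES_{\No-k}$. Your final paragraph correctly isolates and resolves the only genuine subtlety---that network admissibility is in general stricter than $\Gamma$-equivariance---in exactly the way the paper does.
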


\begin{proof} 
We ask: What are the steady-state branches that bifurcate from the fully synchronous state when $\cc=0$?  Using network theory we show in four steps that the answer reduces to $\ES_{\No}\cong {\bf 1} \times \ES_\No$ equivariant theory.  Figures~\ref{FIG: influence network_N23} and \ref{FIG: influence network_N23_quot} illustrate the method
when $(m,n) = (2,3)$.

\begin{enumerate} 
\item Let $G_{\rm c}$ be  the quotient network
 determined by the balanced coloring where all agents for a given option have the same color and different options are assigned different colors.
 The quotient $G_{\rm c}$ is a homogeneous all-to-all coupled $\No$-node network with three different arrow-types and multiarrows between some nodes.
 The $\Vc$ bifurcation occurs in this quotient network. 

\item The network $G_{\rm c}$ is ODE-equivalent to the standard all-to-all coupled $n$ simplex $\GG_n$ with no multi-arrows (Figure~\ref{FIG: influence network_N23_quot}). Hence the steady-state bifurcation theories for networks $G_{\rm c}$ and $\GG_n$ are identical.  

\item The admissible maps for the standard $n$-node simplex network $\GG_n$ are identical to the $\ES_n$-equivariant maps acting on $\R^{n-1}$.  Using the Equivariant Branching Lemma it is known that, generically, branches of equilibria bifurcate from the trivial (synchronous) equilibrium with isotropy subgroup $\ES_k\times \ES_{n - k}$ symmetry, where $1 \leq k \leq \No-1$;
see \cite[Cohen and Stewart]{CS00}.  Consequently, generically, consensus deadlock-breaking bifurcations lead to steady-state branches of solutions with $ {\bf 1}\times (\ES_k\times \ES_{\No - k})$ symmetry.  

\item The bifurcating branches of equilibria are tangent to the critical eigenspace $\Vc$.  Additionally \eqref{fix_decomp} implies that the admissible map leaves the subspace $\Fix(\ES_\Na \times {\bf 1}) = \Vc \oplus\Wd$ invariant.  Hence, the solution branches lie in the subspace $\Vc\oplus\Wd$ and consists of arrays with all rows identical. See \eqref{distinct_irreps}. 
\end{enumerate}
\end{proof}

\begin{remark} \rm
As a function of $\lambda$ each bifurcating consensus branch is tangent to $\Vc$. Hence, consensus bifurcation branches are ZRS and each agent values $k$ options higher and the remaining lower in such a way that the average value remains constant to linear order in $\lambda$ along the branch.
Since the bifurcating solution branch is in  $\Vc\oplus\Wd$, the bifurcating states have all rows equal; that is, all agents 
value the options in the same way. In particular, all agents favor the same $k$ options, disfavor the remaining $n-k$ ones, and the agents are in a consensus state.
Symmetry therefore predicts that in breaking indecision toward consensus, the influence network transitions from an undecided one option-cluster state to a two option-cluster consensus state made of favored and disfavored options. Intuitively, this happens because the symmetry of the fully synchronous undecided state is lost gradually, in the sense that the bifurcating solutions still have large (axial) symmetry group. Secondary bifurcations can subsequently lead to state with smaller and smaller symmetry in which fewer and fewer options are favored.
\end{remark}

\subsection{Deadlock bifurcation ($\cdd = 0$)}
\label{SS:deadlock}

The existence of branches of equilibria stemming from deadlock bifurcation can be summarized as follows:

\begin{theorem} \label{T:divided_deadlock_bifur}
Generically, there is a branch of equilibria corresponding to the axial subgroup 
$(\ES_k\times \ES_{\Na - k}) \times {\bf 1}$ for $1 \leq k\leq \Na-1$.  These solution branches are 
tangent to $\Wdd$, lie in the subspace  $\Fix(\ONE\times\ES_n)=\Wdd\oplus\Wd$, and are deadlock solutions.  
\end{theorem}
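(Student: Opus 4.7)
The plan is to mirror the proof of Theorem~\ref{T:consensus_bifur} exactly, interchanging the roles of agents and options throughout. Specifically, I would work with the coloring of $\mathcal{N}_{mn}$ in which each agent (row) receives its own distinct color, and within a row all $n$ nodes share that color. This coloring is balanced: by the homogeneity of $\mathcal{N}_{mn}$, any two nodes in a common row have color-isomorphic input sets, since each receives $n-1$ row arrows from its own color and, from every other color $c'$, exactly one column arrow and $n-1$ diagonal arrows. The associated polydiagonal is precisely $\Fix(\ONE\times\ES_n) = \Wdd\oplus\Wd$ from~\eqref{fix_decomp}, which is therefore flow-invariant under every admissible vector field.

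Next I would form the quotient network $G_{\rm d}$ determined by this coloring. It is a homogeneous, all-to-all coupled network on $m$ nodes (one per agent), with three arrow types and multiarrows inherited from the quotient construction, entirely analogous to the network $G_{\rm c}$ depicted in Figure~\ref{FIG: influence network_N23} (right) but with $m$ and $n$ swapped. The $\Wdd$ synchrony-breaking bifurcation of $\mathcal{N}_{mn}$ occurs in this quotient. By the criterion of \cite[Dias and Stewart]{DS05}, proving ODE-equivalence of $G_{\rm d}$ with the standard $m$-node simplex $\GG_m$ reduces to showing that both networks admit the same space of linear admissible maps. This is a two-parameter check (self-coupling and cross-coupling), combinatorially identical to the one implicit in Step~2 of the consensus proof, and it yields the desired ODE-equivalence.

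With ODE-equivalence established, the admissible dynamics on $G_{\rm d}$ coincide with the $\ES_m$-equivariant dynamics on $\R^m$, restricted to the standard absolutely irreducible representation on $\R^{m-1}$. The Equivariant Branching Lemma then produces, for each $1\leq k\leq m-1$, a generic branch of bifurcating equilibria with isotropy $\ES_k\times\ES_{m-k}$; see \cite[Cohen and Stewart]{CS00}. Lifting through the quotient identifies these with axial subgroups $(\ES_k\times\ES_{m-k})\times\ONE$ of $\Gamma$ acting on $\Q$. Each branch is tangent to the critical eigenspace $\Wdd$ by construction, and because the admissible map preserves $\Fix(\ONE\times\ES_n) = \Wdd\oplus\Wd$ by~\eqref{fix_decomp}, the entire branch lies in this subspace. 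Consequently every state along it has all columns equal, so each agent assigns a common value to every option and the bifurcating states are deadlock states in the sense of Section~\ref{S:influ_net}.

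The only genuine obstacle is verifying the ODE-equivalence of $G_{\rm d}$ with $\GG_m$, i.e., checking that the multiarrow/self-loop structure on the $m$-node quotient does not enlarge the space of linear admissible maps beyond the $\ES_m$-equivariant ones. This is a finite computation of the same shape as for $G_{\rm c}$ in the consensus case and introduces no new difficulty; everything else (the balanced coloring, the Equivariant Branching Lemma application, and the fixed-point containment) is routine given the framework already developed.
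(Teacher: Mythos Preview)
Your proposal is correct and follows precisely the route the paper intends: the paper's own proof consists of the single sentence ``The proof is analogous to that of consensus bifurcation in Theorem~\ref{T:consensus_bifur},'' and you have faithfully carried out that analogy by swapping rows and columns, forming the $m$-node quotient $G_{\rm d}$, checking ODE-equivalence with the simplex $\GG_m$, and applying the Equivariant Branching Lemma. Your identification of the ODE-equivalence verification as the only nontrivial step is apt, and the two-parameter reduction you describe is exactly what makes it go through.
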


The proof is analogous to that of consensus bifurcation in Theorem~\ref{T:consensus_bifur}.

\begin{remark} \rm
As a function of $\lambda$ each bifurcating solution branch is tangent to $\Wdd$. Hence, the column sums vanish to first order in $\lambda$. It follows that generically at first order in $\lambda$ each column has some unequal entries 
and therefore agents assign two different values to any given option.
This means that agents are divided about option values.  Since the bifurcating solution branch is in $\Wdd\oplus\Wd$, the bifurcating states have all columns equal; that is,
each agent assigns the same value to all the options, so each agent is deadlocked.  In particular, $k$ agents favor 
all options and $m-k$ agents disfavor all options.
Symmetry therefore predicts that in breaking indecision toward deadlock, the influence network transitions from an undecided one agent-cluster state with full symmetry to a two agent-cluster deadlock state with axial symmetry made of agents that favor all options and agents that disfavor all options. Secondary bifurcations can subsequently lead to state with smaller and smaller symmetry in which fewer and fewer agents favor all options.
\end{remark}

\subsection[Stability of consensus and deadlock bifurcation branches]{Stability of consensus  and deadlock bifurcation branches}
\label{S:SCB}

In consensus bifurcation, all rows of the influence matrix $(z_{ij})$ are identical and have sum zero. 
In deadlock bifurcation, all columns of the influence matrix $(z_{ij})$ are identical and have sum zero.  
As shown in Sections~\ref{SS:consensus} and \ref{SS:deadlock} these problems abstractly reduce to 
$\ES_N$-equivariant bifurcation on the nontrivial irreducible subspace
\[
\{x \in \R^N : x_1 + \cdots + x_N = 0 \} 
\]
where $N = m$ for deadlock bifurcation and $N = n$ for consensus bifurcation.

This bifurcation problem has been studied extensively as a model for sympatric speciation in
evolutionary biology~\cite[Cohen and Stewart]{CS00}, \cite[Stewart {\it et al.}]{SEC03}. Indeed, 
this model can be viewed as a decision
problem in which the agents are coarse-grained tokens for organisms, initially of the same 
species (such as birds), which assign preference values to some phenotypic variable (such as beak length)
in response to an environmental parameter $\lambda$ such as food availability.
It is therefore a special case of the models considered here, with $m$ agents and one option.
The primary branches of bifurcating states correspond to the axial subgroups,
which are (conjugates of) $\ES_p \times \ES_q$ where $p+q = N$.

Ihrig's Theorem \cite[Theorem 4.2]{IG84} or \cite[Chapter XIII Theorem 4.4]{GSS88}
shows that in many cases transcritical branches of solutions that are obtained 
using the Equivariant Branching Lemma are unstable at bifurcation.  See Figure~\ref{fig:Ihrig}.
Indeed, this is the case for axial branches for the $\ES_N$ bifurcations, $N>2$, and at first sight this theorem 
would seem to affect their relevance. However, simulations show that equilibria with these
synchrony patterns can exist and be stable. They arise by jump bifurcation, and the axial branch 
to which they jump has regained stability by a combination of two methods: 

(a) The branch `turns round' at a saddle-node.

(b) The stability changes when the axial branch meets a secondary branch. Secondary branches 
correspond to isotropy subgroups of the form $\ES_a \times \ES_b \times \ES_c$ where $a+b+c = N$.

\begin{figure}
	\centering
	\includegraphics[width=0.6\textwidth]{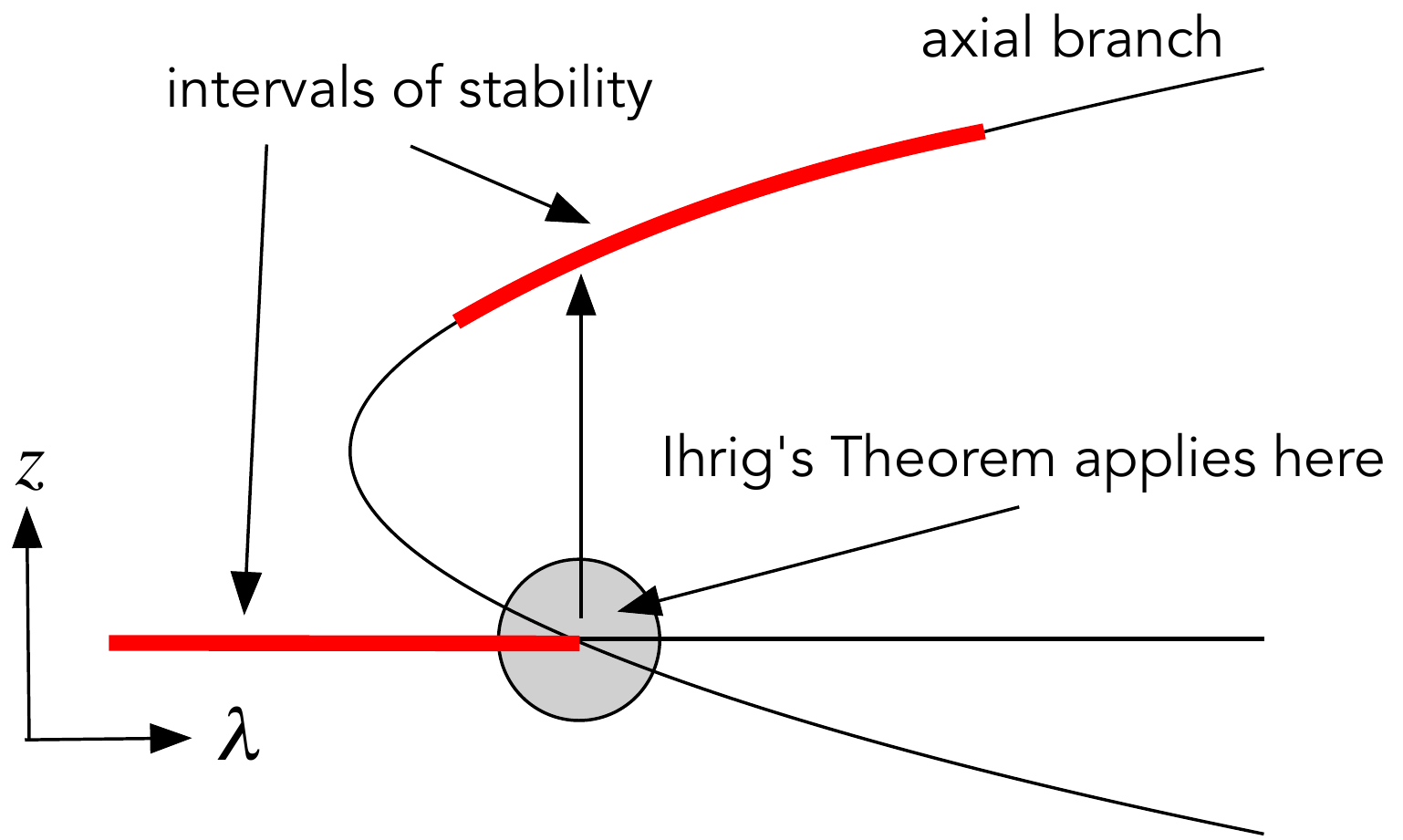}
	\caption{Sketch illustrating solution stability near, but not at, bifurcation. Such bifurcations lead 
	to jump transitions rather than smooth transitions. Vertical $z$ coordinate is multidimensional.}
	\label{fig:Ihrig}
\end{figure}

The fact that these axial solutions can both exist and be stable in model equations is shown 
in~\cite[Elmhirst]{E02} and \cite[Stewart et al.]{SEC03}.  Simulations of consensus and 
deadlock bifurcations are discussed in Section~\ref{S:simulation} and 
simulations of dissensus bifurcations are discussed in Section~\ref{S:simulation_dissensus}.
The main prediction is that despite Ihrig's Theorem, axial states can and do occur stably.
However, they do so through a jump bifurcation to an axial branch that has regained stability, 
not by local movement along an axial branch.  This stability issue appears again in dissensus 
bifurcation and simulations in Section~\ref{SS:stability_dissensus} show that axial solutions 
can be stable even though they are unstable at bifurcation.

\section{Axial Balanced Colorings for Homogeneous Networks}
\label{S:axial}

The analysis of branches of bifurcating solutions in the dissensus subspace $\Vd$ 
requires a natural network analog of the Equivariant Branching Lemma. This new version applies to exotic colorings (not determined 
by a subgroup $\Sigma$) as well as orbit colorings (determined by a subgroup $\Sigma$).
See Section~\ref{S:SBSB}. We deal with the generalization here, and apply it to $\Vd$ in Section~\ref{S:diss_bif}.

For simplicity, assume that each node space of $\GG$ is $1$-dimensional, and 
that $f$ in \eqref{bifurcation_family} is a $1$-parameter family of admissible 
maps. Let $\Delta$ be the diagonal space of fully synchronous states.  By 
admissibility, $f:\Delta\times\R\to\Delta$. 
Hence we can assume generically 
that $f$ has a trivial steady-state bifurcation point at $x_0\in\Delta$.
Given a coloring $\bowtie$, its {\em synchrony subspace} $\Delta_{\bowtie}$ is the subspace where all nodes with the same color are synchronous.

\begin{definition} \em \label{D:axial_netwk_reg}
Let $K$ be the kernel (critical eigenspace for steady-state
bifurcation) of the Jacobian $J = \DD f|_{x_0,\lambda_0}$. 
Then a balanced coloring\index{balanced!coloring} $\bowtie$ with 
synchrony subspace $\Delta_{\bowtie}$ is {\em axial relative to $K$} if 
\begin{equation}
\label{D:axial_coloring}
\begin{array}{rcl}
K \cap \Delta & = & \{0\} \\
\dim(K \cap \Delta_{\bowtie}) & = & 1
\end{array}
\end{equation}
\end{definition}

\subsection{The synchrony-breaking branching lemma}

We now state and prove the key bifurcation theorem for this paper.
The proof uses the method of Liapunov--Schmidt reduction and various
standard properties \cite[Chapter VII Section 3]{GSS88}.

\begin{theorem} \label{T:simp_eigen_reg}
With the above assumptions and notation, let $\bowtie$ be an axial 
balanced coloring. Then generically a unique branch of equilibria with synchrony 
pattern\index{synchrony!pattern} $\bowtie$ bifurcates from $x_0$ at $\lambda_0$. 
\end{theorem}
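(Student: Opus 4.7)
The plan is to apply Liapunov--Schmidt reduction to the restriction of the admissible family $f$ to the synchrony subspace $\Delta_{\bowtie}$. Because $\bowtie$ is balanced, $\Delta_{\bowtie}$ is flow-invariant under every admissible vector field, so $f$ restricts to a smooth family $\tilde f:\Delta_{\bowtie}\times\R\to\Delta_{\bowtie}$. Since $\Delta\subseteq\Delta_{\bowtie}$, the assumed trivial path of fully synchronous equilibria through $(x_0,\lambda_0)$ sits inside $\Delta_{\bowtie}$; after translating coordinates we may assume this path is $\{0\}\times\R$, so that $\tilde f(0,\lambda)\equiv 0$ near $\lambda_0$ and the bifurcation problem for $\tilde f$ is one of bifurcation from a trivial solution.

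Next I would analyze the reduced linearization $\tilde J=J|_{\Delta_{\bowtie}}$, whose kernel is exactly $K\cap\Delta_{\bowtie}$. By axiality this kernel is one-dimensional; fix $v_0$ spanning it. Under the standard genericity assumption that the zero eigenvalue of $\tilde J$ is simple and crosses transversely as $\lambda$ varies, the cokernel of $\tilde J$ is also one-dimensional and Liapunov--Schmidt reduction (as in \cite[Chapter VII Section 3]{GSS88}) produces a scalar bifurcation equation
\[
g(u,\lambda)=0,
\]
in which the full equilibrium is reconstructed as $x=uv_0+\Psi(u,\lambda)$, with $\Psi$ smooth and taking values in a chosen complement of $\ker\tilde J$ inside $\Delta_{\bowtie}$. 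The trivial branch forces $g(0,\lambda)\equiv 0$, so $g(u,\lambda)=u\,h(u,\lambda)$ for a smooth $h$, and nontrivial solutions are the zero set of $h$. The transversality hypothesis translates into $h_\lambda(0,\lambda_0)=g_{u\lambda}(0,\lambda_0)\neq 0$, so the implicit function theorem yields a unique smooth curve $\lambda=\lambda(u)$ of zeros of $h$ in a neighborhood of the origin. Pulling back through the Liapunov--Schmidt parametrization gives a unique bifurcating branch of equilibria inside $\Delta_{\bowtie}$, tangent to $v_0$ at $(x_0,\lambda_0)$.

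The main obstacle is verifying that this branch is genuinely distinct from the trivial path along $\Delta$, since {\it a priori} the reduced equation could simply re-encode the fully synchronous branch. This is exactly where the first axial condition $K\cap\Delta=\{0\}$ is essential: the tangent direction $v_0\in K\cap\Delta_{\bowtie}$ lies outside $\Delta$, so for $u\neq 0$ the equilibrium $uv_0+\Psi(u,\lambda(u))=uv_0+O(u^2)$ leaves $\Delta$ immediately and acquires the finer synchrony pattern $\bowtie$. A subsidiary technical point is that the Liapunov--Schmidt complement of $\ker\tilde J$ can be taken inside the invariant subspace $\Delta_{\bowtie}$, which is immediate from finite-dimensional linear algebra and guarantees that both the reduction and the resulting branch remain in $\Delta_{\bowtie}$, completing the proof.
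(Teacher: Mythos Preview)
Your proposal is correct and follows essentially the same route as the paper: restrict to the flow-invariant subspace $\Delta_{\bowtie}$, observe that the restricted Jacobian has one-dimensional kernel $K\cap\Delta_{\bowtie}$, perform Liapunov--Schmidt reduction to a scalar equation, factor out the trivial solution, and apply the implicit function theorem under the generic transversality condition $h_\lambda(0,\lambda_0)\neq 0$; the condition $K\cap\Delta=\{0\}$ then guarantees the new branch is not the trivial one. One minor difference: the paper uses $K\cap\Delta=\{0\}$ at the outset to \emph{derive} the existence of the trivial branch $X(\lambda)\subset\Delta$ via the implicit function theorem (since $J|_\Delta$ is then nonsingular), whereas you assume this path as given; and the paper notes explicitly that uniqueness forces the synchrony pattern to be exactly $\bowtie$ rather than any coarser refinement, a point you leave implicit.
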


\begin{proof}
The first condition in \eqref{D:axial_coloring} implies that the restriction $f:\Delta\times\R\to\Delta$ is 
nonsingular at $(x_0,\lambda_0)$. Therefore by the Implicit Function Theorem there is a 
branch of trivial equilibria $X(\lambda)$ in $\Delta$ for $\lambda$ near $\lambda_0$,  
so $f(X(\lambda),\lambda) \equiv 0$ where $x_0 = X(\lambda_0)$. We can translate the bifurcation 
point to the origin so that without loss of generality we may assume $f(0,\lambda) = 0$ for
all $\lambda$ near $0$.

The second condition in \eqref{D:axial_coloring} implies that $0$ is a simple eigenvalue of 
$J' = J|_{\Delta_{\bowtie}}$.  Therefore Liapunov--Schmidt reduction of the restriction
$f:\Delta_{\bowtie}\times\R\to\Delta_{\bowtie}$ leads to a reduced map
$g:\R\{v\}\times\R \to \R\{v^*\}$
where $\Delta_{\bowtie} \cap K = \R(v)$ and $v^*$ is the left eigenvector of $J'$ for eigenvalue $0$. The zeros 
of $g$ near the origin are in 1:1 correspondence with the zeros of $f|_{\Delta_{\bowtie}\times\R}$ 
near $x_0 = 0$.  We can write $g(sv,\lambda) = h(s,\lambda)v^*$.
By~\cite[Stewart and Golubitsky]{SG11},  Liapunov--Schmidt reduction can be chosen to preserve the existence of a trivial solution, 
so we can assume $h(0,\lambda) = 0$.

General properties of Liapunov--Schmidt reduction \cite[Golubitsky and Stewart]{GS85} and the fact that $x\mapsto \lambda x$
is admissible for each $\lambda$, imply that $h_\lambda(0,0)$ is generically nonzero. The Implicit Function 
Theorem\index{Implicit Function Theorem} now implies the existence of a unique branch 
of solutions $h(\Lambda(\lambda),\lambda) \equiv 0$ in $\Delta_{\bowtie}$; that is, with 
$\bowtie$ synchrony. Since $K \cap \Delta = 0$ this branch is not a continuation of the trivial branch.

The uniqueness statement in the theorem shows that the synchrony pattern
on the branch concerned is precisely $\bowtie$ and not some coarser pattern.
\end{proof}

Let the network symmetry group be $\Gamma$ and let $\Sigma$ be a subgroup of $\Gamma$. 
Since $K$ is $\Gamma$-invariant,  
$\Fix_K(\Sigma) = \Fix_{\Q}(\Sigma)\cap K$.  
Therefore, if $\Fix_{\Q}(\Sigma)\cap K$ is $1$-dimensional then $\Sigma$ is axial on $K$ and 
Theorem~\ref{T:simp_eigen_reg} reduces to the Equivariant Branching Lemma.

\begin{remark}
	\label{r:subtlety}
	Figure~\ref{F:Qpattern} shows two balanced patterns on a $2 \times 4$ array.
	On the whole space $\R^2\otimes\R^4$ these are distinct, corresponding to
	different synchrony subspaces $\Delta_1, \Delta_2$, which are flow-invariant.
	When intersected with $\Vd$, both patterns give the same
	1-dimensional space, spanned by
	\[
	\Matrixr{1 & -1 & 0 & 0 \\ -1 & 1 & 0 & 0}
	\]
	By Theorem~\ref{T:simp_eigen_reg} applied to $\Vd$,
	there is generically a branch
	that is {\em tangent} to the kernel $\Vd$, and lies in $\Delta_1$,
	and a branch that lies in $\Delta_2$.
	However, $\Delta_1 \subseteq \Delta_2$.
	Since the bifurcating branch is locally unique, so 
	it must lie in the intersection of those spaces; that is, it corresponds
	to the synchrony pattern with fewest colors, Figure~\ref{F:Qpattern} (left).
		\begin{figure}[htb]
		\centerline{
			\includegraphics[width=.4\textwidth]{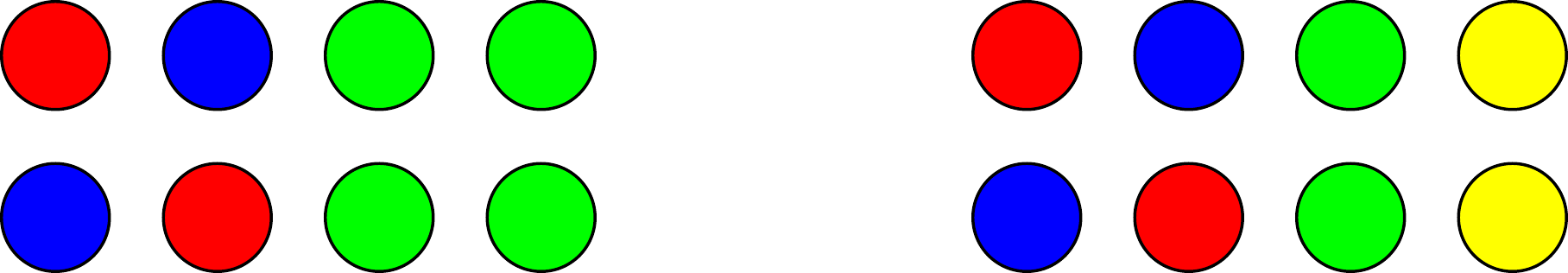}}
		\caption{Two axial patterns (for $\Vd$) with the same linearization on $\Vd$.}
		\label{F:Qpattern}
	\end{figure}
\end{remark}

 \section{Dissensus bifurcations ($\cd = 0$)}
 \label{S:diss_bif}
 
 Our results for axial dissensus bifurcations, with critical eigenspace 
 $\Vd$, have been summarized and interpreted in~\S\ref{case3}
 We now state the results in more mathematical language and  use Theorem~\ref{T:simp_eigen_reg} to determine solutions 
 that are generically spawned at a 
 deadlocked dissensus bifurcation: see Theorem~\ref{T:diss}. In this section we
 verify that Cases \S\ref{case3a}, \S\ref{case3b}, \S\ref{case3c} are all the possible dissensus axials. 
 
The analysis leads to a combinatorial structure that arises naturally
in this problem. It is a generalization of the concept of a Latin square,
and it is a consequence of the balance conditions for colorings.
 
 \begin{definition}\em
\label{D:LatinRec}
A {\em Latin rectangle} is an $a \times b$ array of colored nodes,
such that:

(a) Each color appears the same number of times in every row.

(b) Each color appears the same number of times in every column.
\end{definition}

Condition (a) is equivalent to the row couplings being balanced.
Condition (b) is equivalent to the column couplings being balanced.

Definition~\ref{D:LatinRec} is not the usual definition of `Latin rectangle', which does not
permit multiple entries in a row or column and imposes other conditions~\cite[wikipedia]{wikib}.

Henceforth we abbreviate colors by $R$ (red), $B$ (blue), $G$ (green), and $Y$ (yellow).
The conditions in Definition~\ref{D:LatinRec} are independent. 
Figure~\ref{F:LatinRec} (left) shows a $3 \times 6$ Latin rectangle with $(R,B,G)$ columns and 
$(R,R, B,B,G,G)$ rows.  Counting colors shows that 
Figure~\ref{F:LatinRec} (right) satisfies (b) but not (a).  

In terms of balance: in Figure~\ref{F:LatinRec} (left), each $R$ node
has one $R$, two $B$, and two $G$ row-arrow inputs; one $B$ and one
$G$ column-arrow input; and four $R$, three $B$, and three $G$
diagonal-arrow inputs. Similar remarks apply to the other colors.

In contrast, in Figure~\ref{F:LatinRec} (right), the $R$ node in the first
row has one $B$ and two $G$ row-arrow inputs, whereas the
$R$ node in the second
row has two $B$ row-arrow inputs and one $G$ row-arrow input.
Therefore the middle pattern is not balanced for row-arrows,
which in particular implies that it is not balanced.

\begin{figure}[htb]
\centerline{
\includegraphics[width = .25\textwidth]{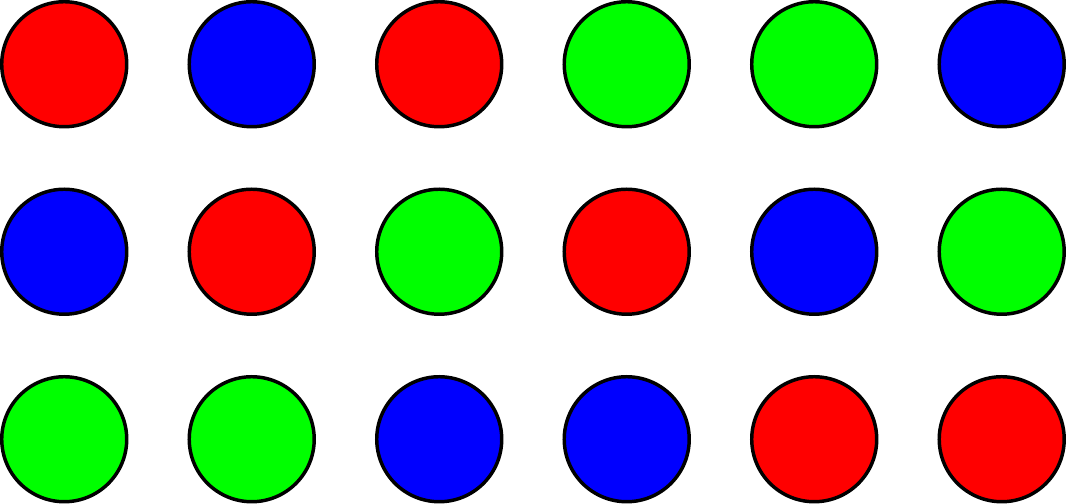} \qquad\qquad
\includegraphics[width = .19\textwidth]{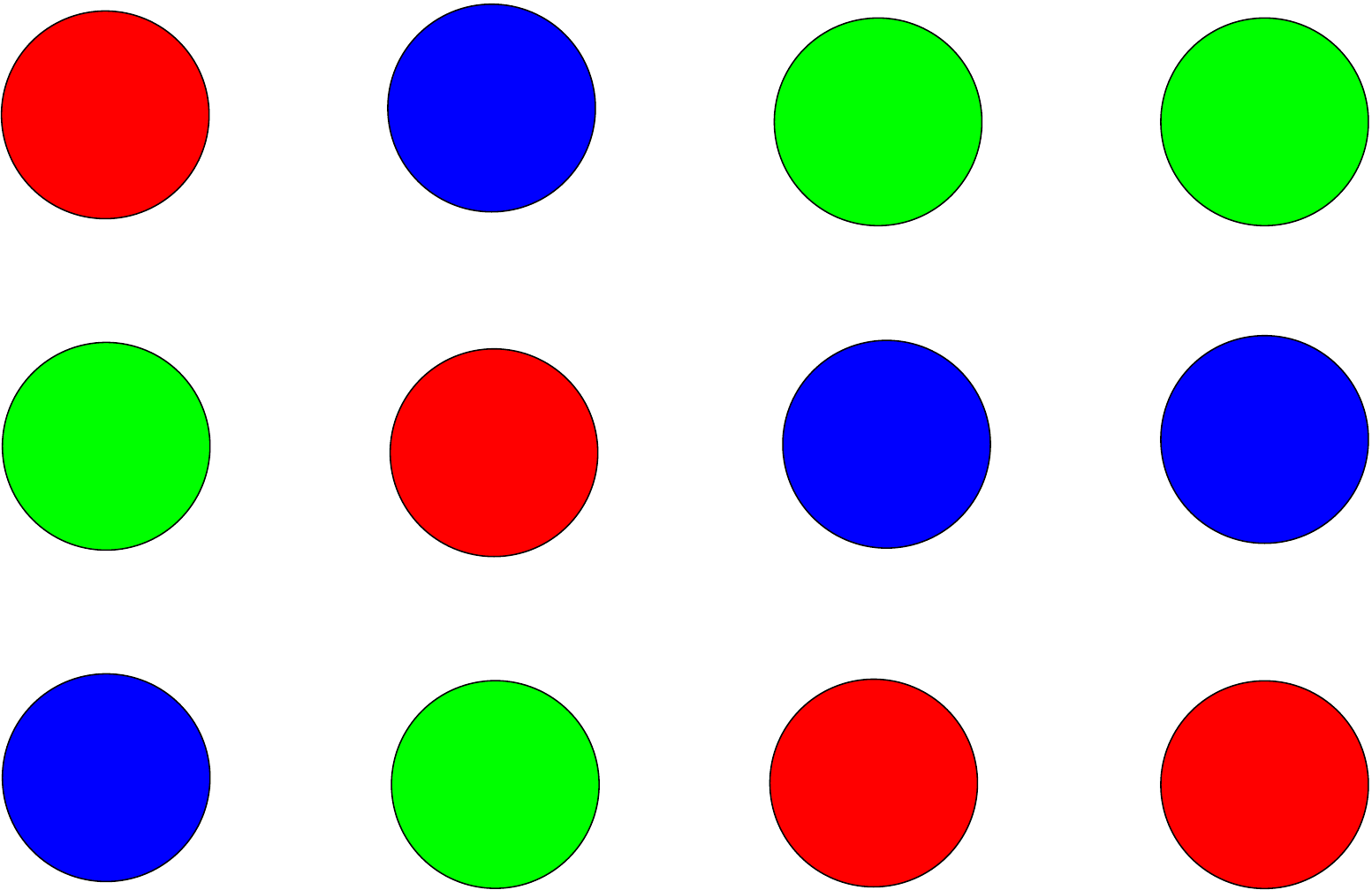}}
\caption{(Left) $3\times 6$ Latin rectangle with 3 colors. (Right) $3\times 4$  rectangle that satisfies (b) but not (a).} 
\label{F:LatinRec}
\end{figure}

The classification of axial colorings on influence networks is:

\begin{theorem}
\label{T:diss}
The axial colorings relative to the dissensus space $\Vd$, up to reordering rows and columns, are
\begin{itemize}
\item[\rm (a)]
$\bowtie\ =\Matrix{B_0 & B_1}$ where $B_0$ is a rectangle with one color ($Y$)
and $B_1$ is a Latin rectangle with two colors ($R,B$). The fraction $0<\rho<1$ of red nodes in every row of $B_1$ is the same as the fraction of red nodes in every column of $B_1$. Similarly for blue nodes. If in $\Delta_{\bowtie}\cap\Vd$ the value of yellow nodes is $z_Y$, the value of red nodes is $z_R$, and the value of blue nodes is $z_B$, then $z_Y=0$ and
\begin{equation}
	\label{E: case a zero sum}
	z_B=-\frac{\rho}{1-\rho}z_R.
\end{equation}
Possibly $B_0$ is empty.
\item[\rm (b)]
$\bowtie\ =\Matrix{B_{0}  \\ B_1}$ where $B_0$ is a rectangle with one color ($Y$)
and $B_1$ is a Latin rectangle with two colors  ($R,B$). The fraction $0<\rho<1$ of red nodes in every row of $B_1$ is the same as the fraction of red nodes in every column of $B_1$. Similarly for blue nodes. If in $\Delta_{\bowtie}\cap\Vd$ the value of yellow nodes is $z_Y$, the value of red nodes is $z_R$, and the value of blue nodes is $z_B$, then $z_Y=0$ and
	\begin{equation}
		\label{E: case b zero sum}
		z_B=-\frac{\rho}{1-\rho}z_R.
\end{equation}
Possibly $B_0$ is empty; if so, this pattern is the same as {\rm (a)} with empty $B_0$.

\item[\rm (c)]
$\bowtie\ =\Matrix{B_{11} & B_{12} \\ B_{21} & B_{22}}$ where the $B_{ij}$ are non-empty rectangles 
with only one color. Let $z_{ij}$ be the value associated to the color of $B_{ij}$ in $\Delta_{\bowtie}\cap\Vd$ and let $B_{11}$ be $r\times s$, $B_{12}$ be $r\times (n-s)$, $B_{21}$ be $(m-r)\times s$ and $B_{22}$ be $(m-r)\times (n-s)$. Then
\begin{subequations}
	\label{E:case c zero sum}
	\begin{align}
		z_{12}&=-\frac{s}{n-s} z_{11}\\
		z_{21}&=-\frac{r}{m-r} z_{11}\\
		z_{22}&=\frac{s}{n-s}\frac{r}{m-r}z_{11}
	\end{align}
\end{subequations}

\end{itemize}
\end{theorem}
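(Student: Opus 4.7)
The plan is to classify all balanced colorings $\bowtie$ with $\dim(\Vd\cap\Delta_{\bowtie})=1$ by combining a structural characterization of balance on $\mathcal{N}_{mn}$ with a dimension count against the zero-row-sum and zero-column-sum constraints defining $\Vd$.

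First, I would prove a structural lemma: any balanced coloring of $\mathcal{N}_{mn}$ is determined by a partition of rows into classes $R_1,\ldots,R_r$ and of columns into classes $C_1,\ldots,C_c$ such that (i) rows (columns) in the same class carry identical color multisets, (ii) each color is confined to a single region $R_a\times C_b$, and (iii) the restriction of the coloring to each region is a Latin rectangle in the sense of Definition~\ref{D:LatinRec}. Item (i) follows by applying the row-arrow balance condition to two same-color nodes in distinct rows and propagating transitively; (ii) is then immediate since rows of different classes contain disjoint color sets; and (iii) holds because once the colors contributed by all other regions in a row are fixed, the within-region row-counts of each color must be constant across $R_a$, and symmetrically for columns.

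Second, I would carry out the dimension count on $\Vd\cap\Delta_{\bowtie}$. Writing $k$ for the number of colors, we have $\dim\Delta_{\bowtie}=k$. By (i)--(ii), the $m$ row-sum and $n$ column-sum equations collapse to $r+c$ distinct linear equations on the color values, and (iii) ensures that the only dependency among them is the global-sum relation. Hence
\begin{equation*}
\dim(\Vd\cap\Delta_{\bowtie})=k-(r+c-1).
\end{equation*}
The axial condition $\dim=1$ is therefore equivalent to $k=r+c$. Since every one of the $rc$ regions must carry at least one color, we also have $k\geq rc$. Combining these inequalities yields $(r-1)(c-1)\leq 1$, leaving only the possibilities $(r,c)\in\{(1,1),(1,2),(2,1),(2,2)\}$.

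Third, I would read off the three cases of the theorem from these four possibilities. The pair $(1,1)$ with $k=2$ gives a single two-color Latin rectangle over the whole grid, which is the ``empty $B_0$'' instance common to cases (a) and (b). The pair $(1,2)$ with $k=3$ forces one region to hold a single color and the other to be a two-color Latin rectangle, which is case (a); the transposed $(2,1)$ is case (b); and $(2,2)$ with $k=4$ forces exactly one color per region, which is case (c). In each case, substituting the region sizes into the row- and column-sum equations produces the scalar relations~\eqref{E: case a zero sum}, \eqref{E: case b zero sum}, and~\eqref{E:case c zero sum}; the identity $p/b=q/a$ that holds for the row- and column-counts $p,q$ of any color in an $a\times b$ Latin rectangle is what makes the consistency work out and the solution space exactly one-dimensional.

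The main obstacle is the rigorous justification that the $r+c$ distinct constraints have rank exactly $r+c-1$ on $\Delta_{\bowtie}$. Concretely, this reduces to two things: showing that the bipartite graph whose vertices are the row and column classes and whose edges are the nonempty regions is connected, and showing that within each multi-color region the two induced constraints are proportional thanks to the Latin-rectangle identity $p/b=q/a$. Both checks can be handled uniformly for the four enumerated $(r,c)$ pairs, and the resulting unique dependency is precisely the global-sum relation used above.
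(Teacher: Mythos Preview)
Your approach is essentially the same as the paper's: both invoke the structural decomposition of balanced colorings into a grid of Latin rectangles with disjoint color sets (the paper cites this as Theorem~\ref{T:main_rect}), and both carry out the identical dimension count $\dim(\Delta_{\bowtie}\cap\Vd)=k-(r+c-1)$ to reduce the axial condition to $k=r+c$ together with $k\geq rc$.

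However, there is a genuine gap in your enumeration. The inequality $(r-1)(c-1)\leq 1$ does \emph{not} force $(r,c)\in\{(1,1),(1,2),(2,1),(2,2)\}$: it also allows $(1,c)$ for every $c\geq 3$ and $(r,1)$ for every $r\geq 3$. In those cases $k=r+c$ means exactly one region carries two colors and each of the remaining $c-1$ (respectively $r-1$) regions carries a single color. These are bona fide axial colorings satisfying your dimension count, yet they are not literally of the form (a), (b), or (c). The paper handles this explicitly: when $(s,t)=(1,t)$ it observes that each one-color block has column sum zero, hence its color value is zero in $\Vd$, and then invokes Remark~\ref{r:subtlety} to amalgamate all such blocks into a single zero block $B_0$, reducing to case~(a). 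You need the same amalgamation step (or an equivalent argument that the bifurcating branch lies in the coarsest synchrony subspace among those sharing the same $1$-dimensional intersection with $\Vd$); without it your case analysis is incomplete.

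A minor further point: your justification that rows in different classes carry \emph{disjoint} color sets is too quick. Having different color multisets does not by itself preclude shared colors; the disjointness really comes from the full balance condition on diagonal arrows, which is what Theorem~\ref{T:main_rect} packages. Your rank argument for the $r+c$ constraints, on the other hand, is more careful than the paper's one-line assertion and is a reasonable place to spend effort.
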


\begin{remark}
The theorem implies that axial patterns of cases (a) and (b) either have two colors, one
corresponding to a negative value of $z_{ij}$ and the other to a positive value;
or, when the block $B_0$ occurs, there can also be a third value that is
zero to leading order in $\lambda$. Axial patterns of case (c) have four colors, two corresponding to positive values and two corresponding to negative values, when $r\neq\frac{m}{2}$ or $s\neq\frac{n}{2}$, or two colors, one corresponding to a positive value and the other to a negative value, when $r=\frac{m}{2}$ and $s=\frac{n}{2}$.
\end{remark}

\subsection{Proof of Theorem~\ref{T:diss}}
\label{S:PSTdiss}

We need the following theorem, whose proof follows directly from imposing the balanced coloring condition and can be found in~\cite{GS22}.

\begin{theorem}
	\label{T:main_rect}
	A coloring of $\mathcal{N}_{mn}$ is balanced if and only if it is conjugate under
	$\ES_m \times \ES_n$ to a tiling by rectangles, meeting along edges, such that
	
	{\rm (a)} Each rectangle is a Latin rectangle.
	
	{\rm (b)} Distinct rectangles have disjoint sets of colors.
\end{theorem}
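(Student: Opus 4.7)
My plan is to prove the two implications separately, with the reverse direction (tiling $\Rightarrow$ balanced) a direct verification and the forward direction (balanced $\Rightarrow$ tiling) carrying the combinatorial content. For the reverse direction, properties (a) and (b) force any two same-colored nodes to lie in a common rectangle of the tiling, and I would check balance arrow type by arrow type: row-arrow inputs within one row-strip have identical color multisets (restricting rectangle by rectangle, each Latin with disjoint colors); column-arrow inputs are symmetric; and diagonal-arrow inputs, being a total-minus-row-minus-column count by inclusion-exclusion, match automatically once the row and column comparisons are in hand.

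For the forward direction, I would first unpack the three arrow-type balance conditions for two same-colored nodes $(i,j)$ and $(k,l)$. Row-arrow balance equates the color multisets of row $i$ minus $(i,j)$ and row $k$ minus $(k,l)$; restoring the shared color gives rows $i$ and $k$ identical color multisets. Symmetrically, column-arrow balance gives columns $j$ and $l$ identical color multisets, and diagonal-arrow balance is then automatic by inclusion-exclusion. I would then define \emph{row-blocks} as the equivalence classes of rows under ``share at least one color,'' which coincides with ``have the same color multiset'' since distinct colors make distinct multiset contributions; this reformulation makes transitivity immediate. \emph{Column-blocks} are defined analogously.

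The key lemma is that every color $c$ has its row-set $R_c$ equal to exactly one row-block and its column-set $C_c$ equal to exactly one column-block: if $i, i' \in R_c$ then rows $i, i'$ share $c$ and lie in the same row-block, while if $i \in R_c$ and $i'$ is in the same row-block as $i$ then rows $i, i'$ have identical multisets, so $i'$ contains $c$. Consequently color $c$ is confined to a unique combinatorial rectangle $R_\alpha \times C_\beta$ with $R_\alpha = R_c$ and $C_\beta = C_c$, and conversely every node in $R_\alpha \times C_\beta$ takes a color lying in that same rectangle. The rectangles $R_\alpha \times C_\beta$ therefore tile the grid, and permuting rows and columns so that each block is contiguous realizes this as a geometric tiling by axis-aligned rectangles meeting along edges. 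Property (b) is immediate from the color-to-rectangle assignment, and property (a) follows because the row-multiset equality within a row-block, restricted to the columns of a single column-block where only that rectangle's colors appear, yields equal per-color counts in every row of the rectangle (and symmetrically for columns). I expect the main obstacle to be reconciling the two independent row-share and column-share equivalences into a single product-partition structure; this is exactly what the key lemma achieves, and it is also what makes the $\ES_m \times \ES_n$ conjugation effective in realizing the rectangles as an edge-aligned geometric tiling.
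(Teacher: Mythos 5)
Your proof is correct and complete. Note, however, that the paper itself does not prove Theorem~\ref{T:main_rect}: it states that the proof ``follows directly from imposing the balanced coloring condition'' and defers to the book \cite{GS22}, so there is no in-paper argument to compare against. Your route is the natural one and fills that gap cleanly: you reduce the three arrow-type balance conditions to the single statement that any two same-colored nodes have rows with equal color multisets and columns with equal color multisets (the diagonal condition being redundant by inclusion--exclusion, using that the removed node colors agree), then define row-blocks and column-blocks, and your key lemma --- that the row-set $R_c$ of each color is exactly one row-block and its column-set $C_c$ exactly one column-block --- is precisely what welds the two independent equivalences into the product partition $\{R_\alpha \times C_\beta\}$, after which the Latin property and color-disjointness of each block follow by restricting the multiset equalities. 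One small caution on the converse: your phrase ``restricting rectangle by rectangle'' along a row-strip tacitly uses that the tiling is a \emph{product} of a row partition and a column partition, so that two rows of a given rectangle traverse the same rectangles everywhere else. A tiling by Latin rectangles with disjoint colors that merely ``meets along edges'' without this grid structure need not be balanced (e.g., a $1\times 2$ monochrome block atop two differently colored $1\times 1$ blocks in a $2\times 2$ array fails column balance). Since your forward direction produces exactly a product tiling, and that is plainly the intended reading of the theorem and of Figure~\ref{F:RectDecomp}, this is a matter of making the hypothesis explicit rather than a flaw in the argument.
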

See Figure~\ref{F:RectDecomp} for an illustration.

\begin{figure}[htb]
	\centerline{
		\includegraphics[width = .45\textwidth]{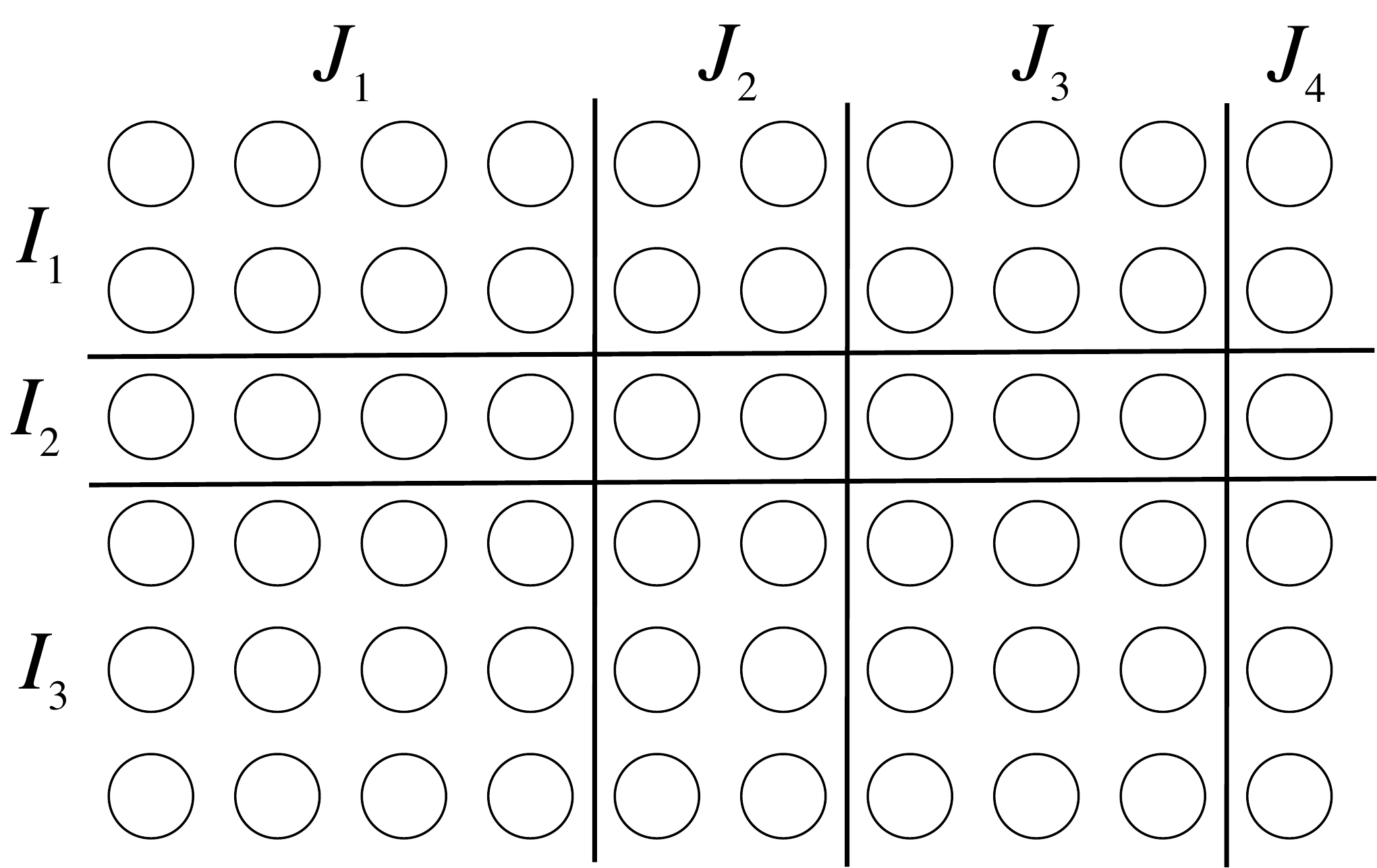}}
	\caption{Rectangular decomposition. Each sub-array must be a Latin rectangle,
		and distinct sub-arrays have disjoint color-sets.}
	\label{F:RectDecomp}
\end{figure}

\begin{proof}[Proof of Theorem~\ref{T:diss}] $ $
	Using Theorem~\ref{T:main_rect}, decompose the coloring  array $\bowtie$ into
	its component Latin rectangles $B_{ij}$. Here $1 \leq i \leq s$ and $1 \leq j \leq t$.
	Suppose that $B_{ij}$ contains $n_{ij}$ distinct colors.
	Let $z \in \Delta_{\bowtie} \cap \Vd$. This happens if and only if
	$z \in \Delta_{\bowtie}$ and all row- and column-sums of $z$ are zero.
	
	We count equations and unknowns to find the dimension of $\Delta_{\bowtie} \cap \Vd$. There
	are $s$ distinct row-sum equations and $t$ distinct column-sum equations.
	These are independent except that the sum of entries over all rows is the same as the sum
	over all columns, giving one linear relation. Therefore the number of independent
	equations for $z$ is $s+t-1$. The number of independent variables is $\sum_{ij} n_{ij}$.
	Therefore
	\[
	\dim (\Delta_{\bowtie} \cap \Vd) = \sum_{ij} n_{ij} - s - t + 1
	\]
	and coloring $\bowtie$ is axial if and only if
	\begin{equation}
		\label{E:axial_condition}
		\sum_{ij} n_{ij} = s + t.
	\end{equation}
	Since $n_{ij} \geq 1$ we must have $st \leq s+t$. It is easy to prove that
	this condition holds if and only if
	\[
	(s,t) = (1,t) \qquad (s,t) = (s,1) \qquad (s,t) = (2,2).
	\]
	
	We now show that these correspond respectively to cases (a), (b), (c)
	of the theorem.
	
	If $(s,t) = (2,2)$ then \eqref{E:axial_condition} implies that each $n_{ij} = 1$.
	This is case (c).
	
	If $(s,t) = (1,t)$ then $s+t-st = t+1-t = 1$. Now \eqref{E:axial_condition}
	requires exactly one $n_{1j}$ to equal 2 and the rest to equal 1.
	Now Remark~\ref{r:subtlety} comes into play. The
	columns with $n_{1j}= 1$ satisfy the column-sum condition, hence
	those $z_{1j} = 0$, and we can amalgamate those columns into a single
	zero block $B_0$. This is case (a).
	
	Case (b) is dual to case (a) and a similar proof applies.
	
	Finally, the ZRS and ZCS conditions give equations~\eqref{E: case a zero sum},
	\eqref{E: case b zero sum}, and \eqref{E:case c zero sum}.
\end{proof}

 \subsection{Orbit axial versus exotic axial}
 
 Axial matrices in Theorem~\ref{T:diss}(c) are orbital.  Let $B_{ij}$ be a $p_i\times q_j$ 
 matrix.  Let $\Sigma$ be the group generated
 by elements that permute rows $\{1,\ldots, p_1\}$, permute rows $\{p_1+1,\ldots,p_1+p_2\}$, permute columns $\{1,\ldots, q_1\}$, 
 and permute columns $\{q_1+1,\ldots, q_1+q_2\}$. Then the axial matrices in Theorem~\ref{T:diss}(c) are those in $\Fix(\Sigma)$.
 
 However, individual rectangles being orbital need
 not imply that the entire pattern is orbital.
  
 Axial matrices in Theorem~\ref{T:diss}(a) are orbital if and only if the $p\times q$ Latin rectangle $B_1$ is orbital.  Suppose $B_1$ is orbital, being fixed by the subgroup $T$ of $\ES_p\times\ES_q$.  Then $[B_0 \; B_1]$ is the fixed-point space of the subgroup $\Sigma$    
generated by $T$ and permutations $P$ of the columns in $B_0$. Hence $[B_0 \; B_1]$ is orbital.  Conversely, 
if $[B_0 \; B_1]$ is orbital, then the group $\tilde{T}$ fixes $[B_0 \; B_1]$.  There is a subgroup $T$ of $\tilde{T}$ that fixes the columns of $B_0$ and $\Fix(T)$ consists of multiples of $B_1$.  Thus $B_1$ is orbital.

\subsection{Two or three agents}
\label{S:2or3a}

We specialize to $2 \times n$ and $3 \times n$ arrays. Here it turns out that
all axial patterns are orbital. Exotic patterns appear when $m \geq 4$.
\subsubsection{$2 \times n$ array}

The classification can be read off directly from Theorem~\ref{T:diss},
observing that every 2-color $2 \times k$ Latin rectangle must
be conjugate to one of the form
\begin{equation}
\label{E:2xk_pattern}
\Matrix{R & R & \cdots & R & B & B & \cdots & B \\
	B & B & \cdots & B & R & R & \cdots & R}
\end{equation}
with $k$ even and $k/2$ nodes of each color in each row.

This gives the form of $B_1$ in Theorem~\ref{T:diss} (a). 
Here $\rho = 1/3$ and $q$ must be divisible by $3$. 
There can also be a zero block $B_0$
except when $n$ is even and $k = n/2$.
Theorem~\ref{T:diss} (b) does not occur.

For Theorem~\ref{T:diss} (c), the $B_{1j}$ must be $1 \times k$, and the $B_{2j}$ must be $1 \times (n-k)$.

Both types are easily seen to be orbit colorings. 

\subsubsection{$3 \times n$ array}
The classification can be read off directly from Theorem~\ref{T:diss},
observing that every 2-color $3 \times k$ Latin rectangle must (permuting colors if necessary)
be conjugate to one of the form
\begin{equation}
\label{E:3xk_pattern}
\Matrix{R & R & \cdots & R & B & B & \cdots & B & B & B & \cdots & B \\
	B & B & \cdots & B & R & R & \cdots & R & B & B & \cdots & B \\
	B & B & \cdots & B & B & B & \cdots & B & R & R & \cdots & R 
	}
\end{equation}
with $k = 3l$ and $k/3$ $R$ nodes and $2k/3$ $B$ nodes in each row.
Here $\rho = 1/3$ and $q$ must be divisible by $3$. 

Theorem~\ref{T:diss} (a): Border this with a zero block $B_0$ when $n \neq 3l$.

Theorem~\ref{T:diss} (b): $B_0$ must be $1 \times n$. Then $B_1$ must be
a $2 \times n$ Latin rectangle with 2 colors, already classified.
This requires $n = 2k$ with $k$ $R$ nodes and $k$ $B$ nodes in each row.
(No zero block next to this can occur.) 

Theorem~\ref{T:diss} (c):
Without loss of generality, $B_{11}$  is $2 \times k$ and $B_{21}$ is $2 \times (n-k)$,
while $B_{12}$  is $1 \times k$ and $B_{22}$ is $1 \times (n-k)$.
All three types are easily seen to be orbit colorings. 

\subsection{Two or three options}

When the number of options is $2$ or $3$ the axial patterns
are the transposes of those discussed in \S\ref{S:2or3a}.
The interpretations of these patterns are different, because the roles
of agents and options are interchanged.

 \subsection{Exotic patterns}
\label{S:EP}

For large arrays it is difficult to determine which $2$-color Latin rectangles
are orbital and which are exotic, because
the combinatorial possibilities for Latin rectangles explode and
the possible subgroups of $\ES_\Na \times \ES_\No$ also grow rapidly.
Here we show that exotic patterns exist. This has implications for
bifurcation analysis: using only the Equivariant Branching Lemma omits
all exotic branches. These are just as important as the orbital ones.

Figure~\ref{F:4x6_exotic} shows an exotic $4 \times 6$ pattern $\bowtie$.  To prove 
that this pattern is exotic we find its isotropy subgroup $H$ in $\ES_4 \times \ES_6$ 
and show that $\Fix(H)$ is not $\bowtie$. Theorem~\ref{T:exotic_suff} below shows 
that there are many exotic $4 \times n$ patterns for larger $n$, and provides
an alternative proof that this pattern is exotic.  

Regarding stability, we note that Ihrig's Theorem~\cite{IG84} applies only to orbital 
patterns, and it is not clear whether a network analog is valid.  Currently, we cannot 
prove that an exotic pattern can be stable near bifurcation though in principle such 
solutions can be stable in phase space.  See Section~\ref{SS:stability_balanced}.  
However, simulations show that both orbital axial branches and exotic axial branches 
can regain stability. See the simulations in Section~\ref{SS:stability_dissensus}.

\begin{figure}[htb]
\centerline{
\includegraphics[width=.25\textwidth]{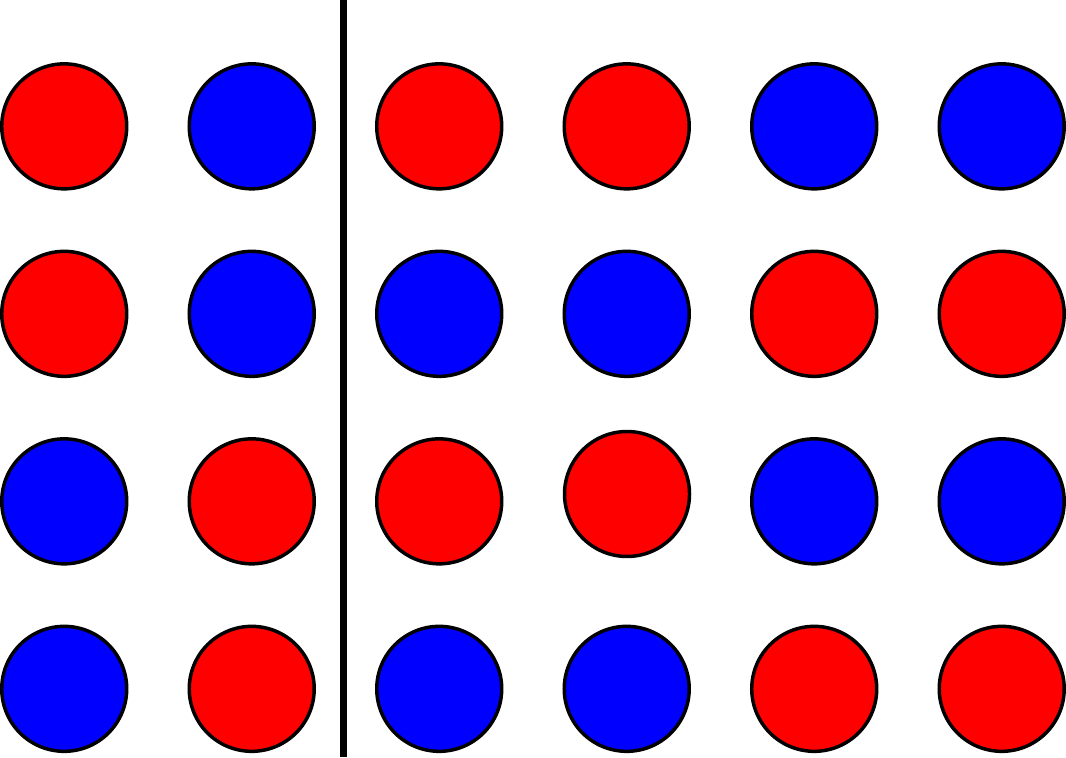} \qquad\qquad
\includegraphics[width=.25\textwidth]{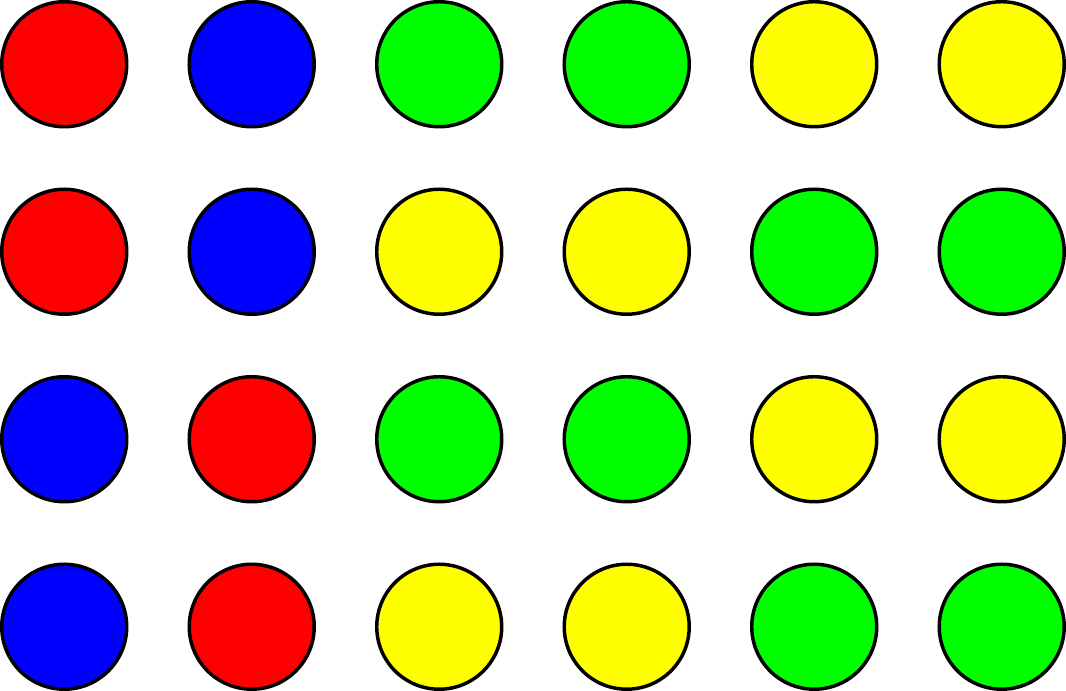}
}
\caption{(Left) Exotic $4 \times 6$ pattern. (Right) Pattern for $\Fix(H)$.}
\label{F:4x6_exotic}
\end{figure}

The general element of $\ES_4 \times \ES_6$ permutes both rows and columns.   
We claim: 
\begin{lemma}
Let $\sigma_{ij}$ interchange columns $i$ and $j$ in Figure~{\rm \ref{F:4x6_exotic}} and let $\rho_{kl}$ interchange rows $k$ and $l$.  Then the isotropy subgroup $H$ of $\bowtie$ is generated by the elements 
\[
\rho_{13}\rho_{24}\sigma_{12}, \ \ \rho_{12}\rho_{34}\sigma_{36}\sigma_{45},\ \ \sigma_{34}, \  \ \sigma_{56} .
\]
Also $\Fix(H)$ is not $\bowtie$, so $\bowtie$ is exotic.
\end{lemma}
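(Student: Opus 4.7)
The plan is to prove the lemma in three stages: (i) verify that each of the four listed permutations lies in the isotropy subgroup $H$, (ii) show that these four elements generate \emph{all} of $H$, and (iii) compute $\Fix(H)$ and observe that it does not coincide with $\Delta_{\bowtie}$.

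For stage (i), the verification is direct. The elements $\sigma_{34}$ and $\sigma_{56}$ each swap two columns whose color patterns down the four rows are identical, so they trivially preserve $\bowtie$. For the two composite elements $\rho_{13}\rho_{24}\sigma_{12}$ and $\rho_{12}\rho_{34}\sigma_{36}\sigma_{45}$, I would apply the permutation to the $4 \times 6$ array of Figure~\ref{F:4x6_exotic} (left) and check the $24$ entries; this is routine bookkeeping and no obstacle.

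For stage (ii), the strategy is: first note that any $(\rho,\sigma)\in\ES_4\times\ES_6$ in $H$ must map each column of $\bowtie$ to a column having the same multiset of colors, and likewise for rows. This gives a partition of the six columns and of the four rows into equivalence classes, and $H$ acts on each class separately. Modulo the obvious subgroup $\langle \sigma_{34}, \sigma_{56}\rangle \cong \Z_2\times\Z_2$ (which simply permutes columns within the classes $\{3,4\}$ and $\{5,6\}$), any remaining element of $H$ is determined by how it acts on a distinguished column of each equivalence class and on the four rows. A finite case analysis — checking, for each admissible choice of row permutation $\rho\in\ES_4$, which column permutations are compatible with $\rho$ entry-by-entry — leaves exactly two nontrivial cosets, represented by the two compound generators listed. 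This combinatorial enumeration is the main obstacle: it is not deep, but it is the step where mistakes are easiest, since the whole point of the lemma is that $|H|$ is smaller than one would naively guess from the color statistics alone.

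For stage (iii), I compute $\Fix(H)$ by imposing the constraints forced by the four generators on a generic $4\times 6$ matrix $(z_{ij})$. The elements $\sigma_{34}$ and $\sigma_{56}$ identify column pairs; the two compound elements then force further identifications between certain entries of different color in $\bowtie$. Reading off the resulting equivalence classes yields the pattern drawn in Figure~\ref{F:4x6_exotic} (right), which has strictly fewer colors than $\bowtie$. Hence $\Delta_{\bowtie}\subsetneq\Fix(H)$, so $\bowtie$ is not the synchrony subspace of any isotropy subgroup of $\Gamma=\ES_4\times\ES_6$; it is exotic in the sense of Section~\ref{S:SBSB}, and this axial branch is genuinely inaccessible from the Equivariant Branching Lemma but accessible from Theorem~\ref{T:simp_eigen_reg}.
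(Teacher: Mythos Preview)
Your three-stage plan is sound and matches the paper's structure, but there is a concrete weakness in stage (ii). The partition criterion you propose --- ``columns with the same multiset of colors'' --- is too coarse to do any work here: in the $4\times 6$ pattern of Figure~\ref{F:4x6_exotic} every column has exactly two $R$ nodes and two $B$ nodes, and every row has three of each. Your partition of columns (and of rows) is therefore the trivial one-block partition, and the sentence ``$H$ acts on each class separately'' imposes no constraint whatsoever. You then fall back on a brute-force enumeration over $\rho\in\ES_4$, which would eventually succeed but is exactly the tedium you were hoping the partition would avoid.

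The paper sharpens the partition by grouping columns into \emph{column components}: two columns lie in the same component if they are identical or become identical after swapping the two colors. With this criterion the six columns split as $\{1,2\}$ and $\{3,4,5,6\}$, and since any element of $H$ must send a column to one that is either equal or color-complementary, $H$ preserves this partition (the two components even have different sizes, so there is no mixing). This is the structural insight that collapses the case analysis: on $\{1,2\}$ the column action is either trivial or $\sigma_{12}$, and the latter forces a color swap, which pins down the row permutation as $\rho_{13}\rho_{24}$; on $\{3,4,5,6\}$ one argues similarly, separating the color-preserving elements ($\sigma_{34},\sigma_{56}$) from the color-swapping ones ($\rho_{12}\rho_{34}\sigma_{36}\sigma_{45}$). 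Your stage (iii) is fine and essentially what the paper does: it exhibits a strictly coarser pattern fixed by all four generators, so $\Delta_{\bowtie}\subsetneq\Fix(H)$.
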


\begin{proof}
The proof divides into four parts.

1.  The network divides into column components where two columns in a column component are either identical or can be made identical after color swapping. In this case there are two column components, one consisting of columns \{1,2\} and the other of columns \{3,4,5,6\}.  It is easy to see that elements in any subgroup $H \subseteq \ES_4\times\ES_6$ map column components to column-components; thus $H$ preserves column components because the two components contain different numbers of columns. Therefore $H$ is generated by elements of the form 
\[
(\rho,\alpha) \in \ES_4^{(1,2,3,4)} \times \ES_2^{(1,2)} \quad \mbox{or} \quad (\rho,\beta) \in \ES_4^{(1,2,3,4)} \times \ES_4^{(3,4,5,6)}
\]
where $\rho\in \ES_4^{(1,2,3,4)}$ permutes rows $\{1,2,3,4\}$, $\alpha \in\ES_2^{(1,2)}$ permutes columns $\{1, 2\}$, and  $\beta \in\ES_4^{(3,4,5,6)}$ permutes columns $\{3,4,5,6\}$.  

2. The only element of $H$ that contains the column swap $\sigma_{12}$ is 
$\rho_{13}\rho_{24}\sigma_{12}$, since $\sigma_{12}$ swaps colors in the first column component. This is the first generator in $H$.  

3. Elements in $H$ that fix colors in the second column component are generated by $\sigma_{34}, \; \sigma_{56}$.  The elements that swap colors are generated by $ \rho_{12}\rho_{34}\sigma_{36}\sigma_{45}$.

4. It is straightforward to verify that any element in $H$ fixes the pattern in Figure~\ref{F:4x6_exotic} (right).
\end{proof}

\subsection{$4\times k$ exotic dissensus value patterns}

In this section we show that the example of exotic value pattern in Section~\ref{S:EP} is not a rare case. Exotic $4\times k$ dissensus various patterns are indeed abundant.

\subsubsection{Classification of $4\times k$ 2-color Latin rectangles}

Let $B$ be a $4\times k$ Latin rectangle. With colors $R,B$ there are ten possible columns: four with one $R$ and six with two $R$s.

\begin{figure}[htb]
	\centering
	\includegraphics[width=.5\textwidth]{./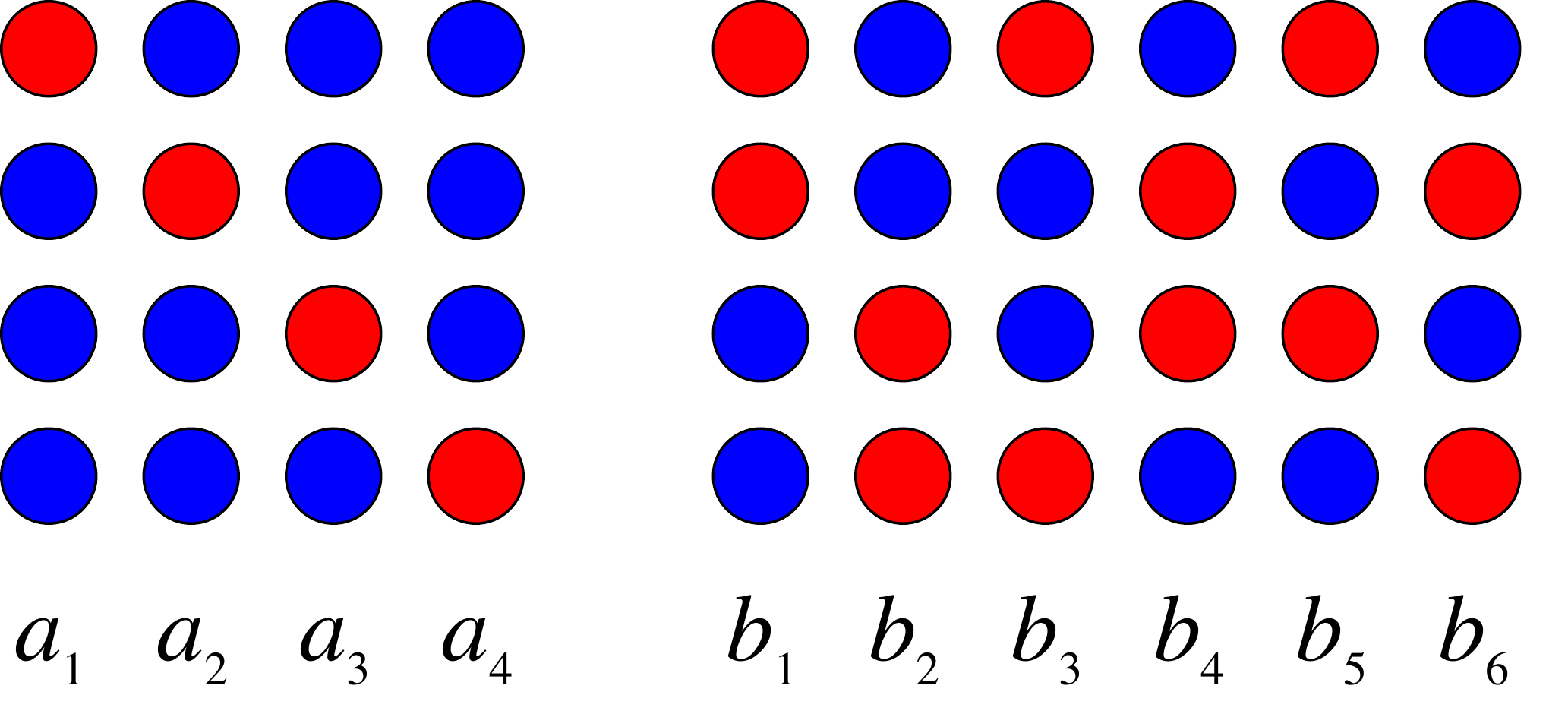}
	\caption{The ten $2$-color $4$-node columns divided into two sets.}
	\label{F:4xn_cols}
\end{figure}

Column balance and row balance imply that the columns are color-isomorphic, so either the columns are in the first set \hbox{$a_1$ -- $a_4$} or the second set $b_1$ -- $b_6$.

For the first set, row-balance implies that all four columns must
occur in equal numbers. Hence this type of Latin rectangle can only occur if the $k$ is a multiple of $4$. For $k=12$, the pattern looks like Figure~\ref{F:4xn_1R}. This pattern is always orbital.

\begin{figure}[htb]
	\centering
	\includegraphics[width=.55\textwidth]{./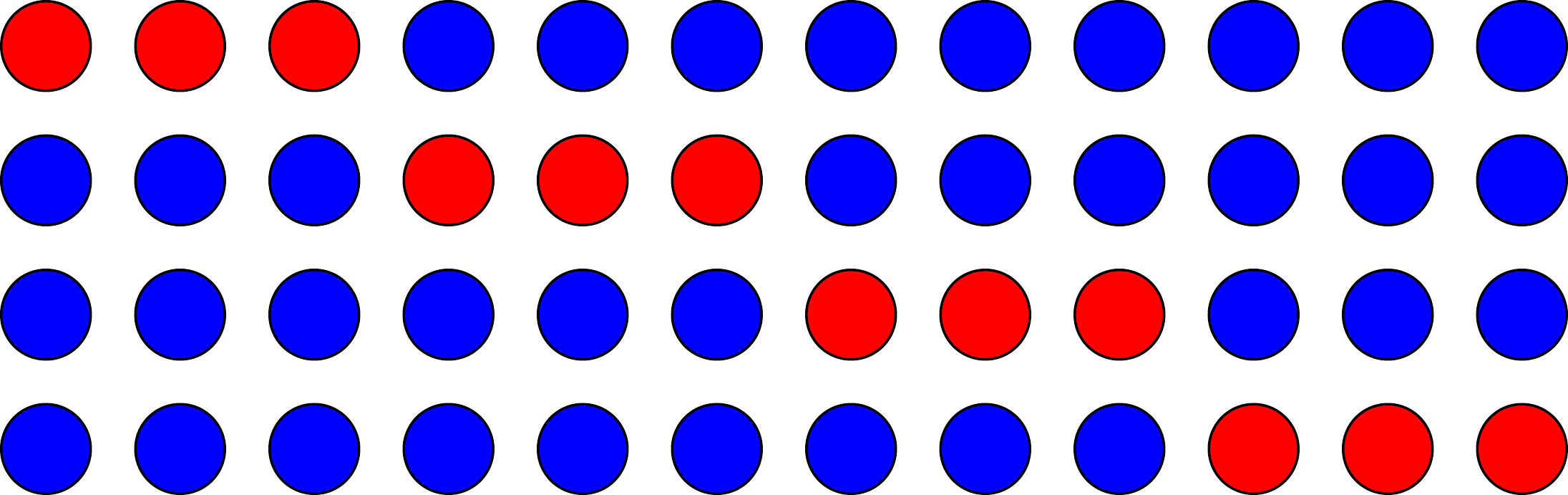}
	\caption{Typical pattern from first set of columns $a_1$ -- $a_4$ in Figure~\ref{F:4xn_cols}.}
	\label{F:4xn_1R}
\end{figure}

The second set is more interesting. We have grouped the column patterns in
color-complementary pairs: $(b_1,b_2)$,  $(b_3,b_4)$, and  $(b_5,b_6)$.

\begin{lemma}
\label{L:pair_lemma}
Suppose a $4\times n$ Latin rectangle has $\mu_i$ columns of type $b_i$.
Then
\begin{equation}
\label{E:equal_pairs}
\mu_1 = \mu_2=\nu_1 \qquad \mu_3 = \mu_4=\nu_2 \qquad \mu_5 = \mu_6 =\nu_3.
\end{equation}
\end{lemma}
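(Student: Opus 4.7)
\medskip

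\noindent\textbf{Proof proposal.}
The plan is purely linear-algebraic: translate the row-balance condition into a system of equations in the multiplicities $\mu_1,\ldots,\mu_6$ and solve. Column balance is automatic: by assumption every column of the rectangle lies in the second set of Figure~\ref{F:4xn_cols}, so every column contains exactly two $R$'s and two $B$'s, which already satisfies condition (b) of Definition~\ref{D:LatinRec}. Therefore the only non-trivial content of being a Latin rectangle is row balance, condition (a).

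The key step is to record, for each of the four rows, which of the six column types contribute an $R$ to that row. Labeling the patterns by the rows in which $R$ occurs, say $b_1=\{1,2\}$, $b_2=\{3,4\}$, $b_3=\{1,3\}$, $b_4=\{2,4\}$, $b_5=\{1,4\}$, $b_6=\{2,3\}$ (so complementary pairs are $(b_1,b_2)$, $(b_3,b_4)$, $(b_5,b_6)$, matching the grouping in Figure~\ref{F:4xn_cols}), the number of $R$'s in rows $1,2,3,4$ is respectively
\[
\mu_1+\mu_3+\mu_5,\quad \mu_1+\mu_4+\mu_6,\quad \mu_2+\mu_3+\mu_6,\quad \mu_2+\mu_4+\mu_5.
\]
Row balance equates all four of these quantities.

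Next I would extract the equalities in pairs. Equating rows $1$ and $4$ gives $\mu_1+\mu_3=\mu_2+\mu_4$; equating rows $2$ and $3$ gives $\mu_1+\mu_4=\mu_2+\mu_3$. Adding these two relations cancels $\mu_3+\mu_4$ on each side and yields $\mu_1=\mu_2$, whence also $\mu_3=\mu_4$ by substitution. Finally, equating rows $1$ and $3$ (now using $\mu_1=\mu_2$ and $\mu_3=\mu_4$) produces $\mu_5=\mu_6$. Setting $\nu_1=\mu_1$, $\nu_2=\mu_3$, $\nu_3=\mu_5$ gives \eqref{E:equal_pairs}.

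There is no real obstacle: the only mild subtlety is checking that no further row-balance equation gives an independent constraint (so that the three common values $\nu_1,\nu_2,\nu_3$ are genuinely free parameters subject only to $\nu_1+\nu_2+\nu_3=n/2$, which is the per-row count). A cleaner way to present this, which I would mention as a remark, is that the four row-sum equations over $\mathbb{Q}$ have rank $3$ and the kernel is spanned by the three complementary-pair vectors $e_1+e_2$, $e_3+e_4$, $e_5+e_6$, giving the same conclusion.
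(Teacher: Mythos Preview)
Your proposal is correct and follows essentially the same approach as the paper: write down the four row-$R$ counts as linear combinations of the $\mu_i$, set them equal, and read off \eqref{E:equal_pairs}. The paper simply lists the four sums and asserts ``it is not hard to see'' the conclusion, whereas you carry out the elimination explicitly and add the rank/kernel remark; your particular indexing of $b_3,\ldots,b_6$ differs from the figure (your rows 3 and 4 are swapped relative to the paper's), but this is an immaterial labeling choice and your computation is internally consistent.
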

\begin{proof}
Suppose the Latin rectangle has $\mu_i$ columns of type $b_i$. The row-balance
condition is that the total number of $R$ nodes in each row is the same. Therefore
the sums
\begin{align*}
	&\mu_1 + \mu_3 + \mu_5 \\
	&\mu_1 + \mu_4 + \mu_6 \\
	&\mu_2 + \mu_4 + \mu_5 \\
	&\mu_2 + \mu_3 + \mu_6 
\end{align*}
are all equal. This yields three independent equations in six unknowns.
It is not hard to see that this happens if and only if~\eqref{E:equal_pairs} holds.
\end{proof}

Lemma~\ref{L:pair_lemma} shows that in $4\times n$ Latin rectangles,
 color-complementary pairs of columns occur in equal numbers. For each color-complementary pair, we call the number $\nu_i$ its {\it multiplicity}. For purposes of illustration, we
can permute columns so that within the pattern identical columns occur in blocks. A typical pattern is Figure~\ref{F:4xn_2R}. Observe that this type of Latin rectangle exist only when $k$ is even.

\begin{figure}[htb]
	\centering
	\includegraphics[width=.55\textwidth]{./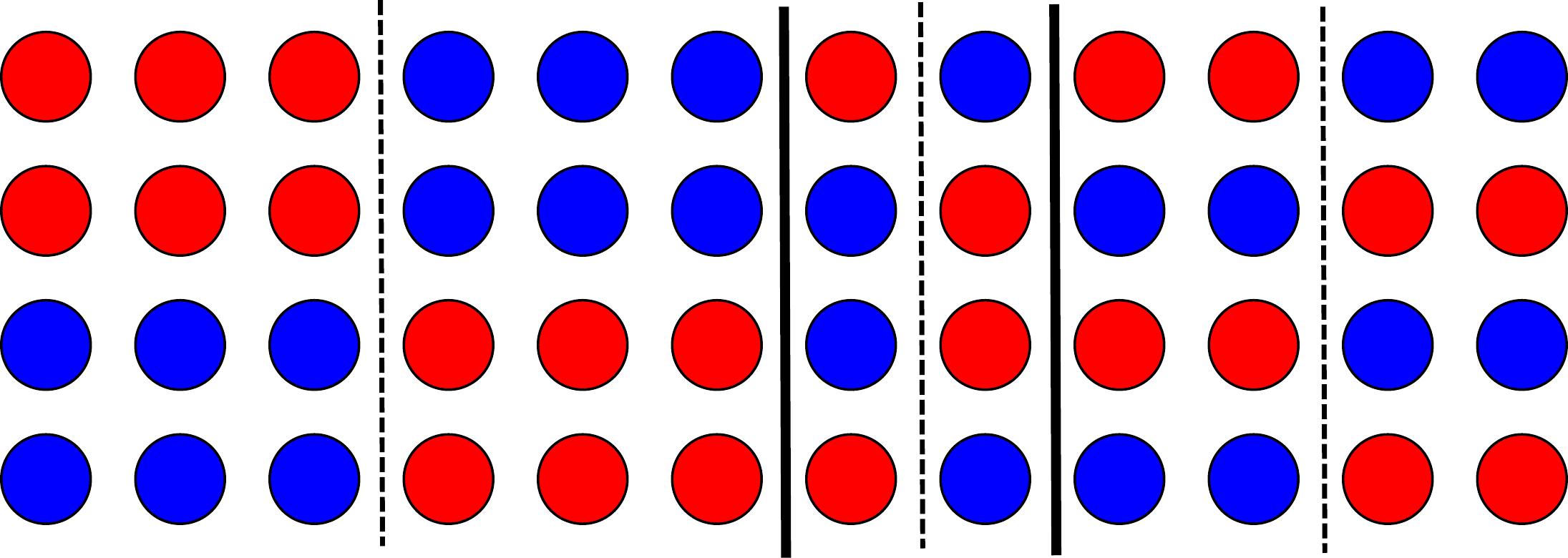}
	\caption{Typical pattern from second set of columns $b_1$ -- $b_6$ in Figure~\ref{F:4xn_cols}. The multiplicity of the color-complementary pair $(b_1,b_2)$ is 3. The multiplicity of the color-complementary pair $(b_3,b_4)$ is 1. The multiplicity of the color-complementary pair $(b_5,b_6)$ is 2.}
	\label{F:4xn_2R}
\end{figure}

\subsubsection{Sufficiency for a $\boldsymbol{4\times k}$ Latin rectangle with 2 colors to be exotic}

We now prove a simple sufficient condition for a $2$-coloring of a
$4\times n$ Latin rectangle to be exotic. In particular this gives another proof for the
$4\times 6$ example. 

\begin{theorem}
\label{T:exotic_suff}
Let $\bowtie$ be a $2$-coloring of a
$4\times n$ Latin rectangle in which the colors $R$ and $B$ occur in equal proportions.
Suppose that two color-complementary pairs have different multiplicities.
Then $\bowtie$ is exotic. 
\end{theorem}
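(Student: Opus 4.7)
The plan is to argue by contradiction. Suppose $\bowtie$ were orbital, so $\bowtie = \Fix(H)$ for some $H \subseteq \ES_4 \times \ES_n$. Because $\bowtie$ has exactly two colors, the orbital condition means the red nodes and the blue nodes are precisely the two $H$-orbits, and in particular $H$ acts transitively on the set of red nodes. The equal-proportions hypothesis excludes columns from the one-$R$ set $a_1, \ldots, a_4$ of Figure~\ref{F:4xn_cols} (those would give a $1{:}3$ color ratio), so every column of $\bowtie$ has type $b_i$ for some $i$, and Lemma~\ref{L:pair_lemma} organizes the multiplicities as $\mu_1 = \mu_2 = \nu_1$, $\mu_3 = \mu_4 = \nu_2$, $\mu_5 = \mu_6 = \nu_3$.

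The crux of the proof is a multiplicity-equalization argument. I would fix two red nodes $(r_1, c_1)$ and $(r_2, c_2)$ lying in columns of types $b_i$ and $b_j$ respectively, with $\mu_i, \mu_j > 0$; by transitivity of $H$ on red nodes, some $(\rho, \tau) \in H$ sends the first node to the second. Because $(\rho, \tau)$ fixes the coloring and acts via $(k, c) \mapsto (\rho(k), \tau(c))$, the column of $\bowtie$ at position $\tau(c)$ is obtained from the column at $c$ by applying the row permutation $\rho$, for \emph{every} column $c$. This forces $\rho \cdot b_i = b_j$ as column patterns, and simultaneously $\tau$ restricts to a bijection between the set of type-$b_i$ columns of $\bowtie$ and the set of type-$b_j$ columns, yielding $\mu_i = \mu_j$. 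Iterating over all pairs of types with positive multiplicity forces all positive $\nu_k$ to coincide, contradicting the hypothesis and showing $\bowtie$ is exotic.

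The main obstacle I expect is not conceptual but rather careful bookkeeping: the $\rho$ and $\tau$ come coupled as one element $(\rho, \tau) \in H$, so one must verify that the same $\rho$ that moves $r_1$ to $r_2$ necessarily sends every type-$b_i$ column pattern to the type-$b_j$ pattern, not merely the column containing $r_1$. This is immediate once one writes out the action, but it is the step where the global character of the group action does the work. A secondary clarification concerns the reading of the hypothesis: ``two color-complementary pairs have different multiplicities'' should be interpreted as two pairs with \emph{positive} but unequal multiplicities, since a pair with multiplicity zero contributes no columns to $\bowtie$ and therefore cannot participate in the equalization argument. With these two points spelled out, the proof reduces to the clean combinatorial contradiction above.
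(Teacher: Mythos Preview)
Your proof is correct and follows essentially the same route as the paper's: assume orbitality, so the isotropy subgroup $H$ acts transitively on the red nodes; then show that any $(\rho,\tau)\in H$ sends every column of type $b_i$ to a column of type $\rho\cdot b_i$, forcing the column-type multiplicities to be permuted and hence preserved, which contradicts transitivity when two occurring pairs have unequal multiplicity. The paper phrases the invariance at the level of complementary pairs (any element of $\ES_4\times\ES_n$ fixing $\bowtie$ maps complementary pairs to complementary pairs, preserving their multiplicities), whereas you work directly with the individual types $b_i$ and deduce $\mu_i=\mu_j$; this is the same argument with slightly different bookkeeping. Your explicit remark that the hypothesis must be read as two pairs with \emph{positive} unequal multiplicities is well taken---the paper's proof uses this implicitly when it asserts that both $C_1$ and $C_2$ contain a red node.
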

\begin{proof}
Each column contains two $R$ nodes and two $B$ nodes. By Lemma~\ref{L:pair_lemma}
color-complementary pairs of columns occur in equal numbers, so the multiplicity of 
such a pair is defined.
The isotropy subgroup $H$ of $\bowtie$ acts transitively on the set
of nodes of any given color, say $R$. Let $C_1$ be one color-complementary pair
of columns and let $C_2$ be the other.

The action of $\ONE \times S_n \subseteq \ES_m \times \ES_n$
permutes the set of columns. It preserves the pattern in each column, so
it preserves complementary pairs.
The action of $\ES_m \times \ONE \subseteq \ES_m \times \ES_n$
leaves each column fixed setwise. It permutes the pattern in each column
simultaneously, so it maps complementary pairs of columns to complementary pairs.

Therefore any element of $\ES_m \times \ES_n$, and in particular
of $H$, preserves the multiplicities of complementary pairs. Thus
$H$ cannot map any column of $C_1$ to a column of $C_2$,
so $H$ cannot map any node in $C_1$ to a node in $C_2$.
But both $C_1$ and $C_2$ contain an $R$ node by the Latin rectangle property,
which contradicts $H$ being transitive on $R$ nodes.
\end{proof}

To construct exotic $4 \times n$ axial $2$-color patterns
for all even $n \geq 6$, we concatenate blocks of complementary pairs
of columns that do not all have the same multiplicity. The $4 \times 6$
pattern of Figure~\ref{F:4x6_exotic} (left) is an example.

This proof relies on special properties of columns of length $4$, notably
Lemma~\ref{L:pair_lemma}. Columns of odd length cannot occur as 
complementary pairs in Latin rectangles. However, at least one exotic pattern 
exists for a $5\times5$ influence network~\cite[Stewart]{S21}, and it seems 
likely that they are common for larger $m,n$. More general sufficiency conditions 
than that in Theorem~\ref{T:exotic_suff} can probably be stated.

\section{An $\mathcal N_{mn}$-admissible value-formation ODE}

We propose the following $\mathcal N_{mn}$-admissible value-formation dynamics:
\begin{equation}\label{eq: admissible value formation}
\dot z_{ij} = -z_{ij} + \lambda \left( S_1 \left({\talpha z_{ij} + \sum_{k\not= i} \tgamma z_{kj}}\right) 
	+ \sum_{l\not= j} S_2 \left( \tbeta z_{il} + \sum_{k\not = i} \tdelta z_{kl} \right) \right) 
\end{equation}
Here $S_1$ and $S_2$ are sigmoidal functions, $1 \leq i \leq m$, and $1\leq j \leq n$. Model~\eqref{eq: admissible value formation} is intimately related to and inspired by the model of opinion dynamics introduced in~\cite[Bizyaeva {\it et al.}]{BFL22}.

Model~\eqref{eq: admissible value formation} has two main components: a linear degradation term (modeling resistance to change assigned values) and a saturated network interaction term whose strength is controlled by the bifurcation parameter $\lambda$. The network interaction term is in turn composed of four terms, modeling the four arrow types of  $\mathcal N_{mn}$ networks. The sigmoids $S_1$ and $S_2$ satisfy
\begin{equation}\label{eq:sigmoid conditions}
	S_i(0)=0,\quad S_i'(0)=1,\quad S_i^{(n)}(0)\neq 0,\ n\geq 2.
\end{equation}
With this choice, the trivial equilibrium for this model is $z_{ij}=0$ for all $i,j$.

In simulations, we use
\begin{equation}\label{eq: sigmoid definition}
	S_i(x)=\frac{\tanh(x-s_i)+\tanh(s_i)}{1-\tanh(s_i)^2},
\end{equation}
which satisfies~\eqref{eq:sigmoid conditions} whenever $s_i\neq 0$.

\subsection{Parameter interpretation}

 Parameter $\talpha$ tunes the weight of the node self-arrow, that is, the node's internal dynamics. When $\alpha<0$ resistance to change assigned value is increased through nonlinear self-negative feedback on option values. When $\alpha>0$ resistance to change assigned value is decreased through nonlinear self-positive feedback on option values.

 Parameter $\tbeta$ tunes the weight of row arrows, that is, intra-agent, inter-option interactions. When $\tbeta<0$, an increase in one agent's valuation of any given option tends to decrease its valuations of other options. When $\tbeta>0$, an increase in one agent's valuation of any given option tends to increase its valuations of other options.

Parameter $\tgamma$ tunes the weight of column arrows, that is, inter-agent, intra-option interactions. When $\tgamma<0$, an increase in one agent's valuation of any given option tends to decrease other agents' valuations of the same option. When $\tgamma>0$, an increase in one agent's valuation of any given option tends to increase other agents' valuations of the same option.

Parameter $\tdelta$ tunes the weight of diagonal arrows, that is, inter-agent, inter-option interactions. When $\tdelta<0$, an increase in one agent's valuation of any given option tends to decrease other agents' valuations of the all other options. When $\tdelta>0$, an increase in one agent's valuation of any given option tends to increase other agents' valuations of all other options.

\subsection{Bifurcation conditions}

Conditions for the various types of synchrony-breaking bifurcation can be computed by noticing that in model~\eqref{eq: admissible value formation}
\begin{equation}\label{EQ: model deadlocked jac eigvals}
\cd =  -1+\lambda\tcd \quad \cc =  -1 + \lambda\tcc\quad
\cdd =  -1 +\lambda\tcdd \quad \cdl =  -1 + \lambda\tcdl
\end{equation}
where
\begin{equation*}
	\begin{aligned}
		\tcd & =  \talpha-\tbeta-\tgamma+\tdelta \\
		\tcc & =   \talpha-\tbeta+(m-1)\left(\tgamma-\tdelta\right)\\
		\tcdd & =  \talpha-\tgamma+(n-1)\left(\tbeta-\tdelta\right)\\
		\tcdl & =  \talpha+(n-1)\tbeta+(m-1)\tgamma+(m-1)(n-1)\tdelta
	\end{aligned}
\end{equation*}

For instance, if $\tcd>0$ and $\tcd>\tcc,\tcdd,\tcdl$, then a dissensus  synchrony-breaking bifurcation happens at $\lambda=\frac{1}{\tcd}$.

\section{Stable solutions}
\label{S:stable_solutions}

This section divides into four parts:
\begin{itemize}

\item[\eqref{S:simulation}] Simulation of consensus and deadlock synchrony-breaking.

\item[{\eqref{S:simulation_dissensus}}]  Simulation of dissensus synchrony-breaking.

\item[\eqref{SS:stability_dissensus}] Discussion of the stability of dissensus bifurcation branches.

\item[\eqref{SS:stability_balanced}]  Stable equilibria can exist in balanced colorings.

\end{itemize}

\subsection{Simulation of consensus and deadlock synchrony-breaking}
\label{S:simulation}

To simulate consensus and deadlock synchrony-breaking we use $s_1=0.5$ and  $s_2=0.3$ 
in~\eqref{eq: admissible value formation} and \eqref{eq: sigmoid definition}. For consensus 
synchrony-breaking we set
\begin{equation} \label{e:consensus_bifurcation}
		\tcd =  \tcdd = \tcdl -1.0,\quad  \tcc = 1.0, \quad  \lambda = -1 +\varepsilon;
\end{equation}
and for deadlock synchrony-breaking we set
\begin{equation}
\label{e:deadlock_bifurcation}
		\tcd = \tcc  = \tcdl =  -1.0,  \quad\tcdd =  1.0, \quad\lambda = 1+\varepsilon, 
\end{equation}
where $0 \leq \varepsilon\ll1$. In other words, both for consensus and deadlock synchrony-breaking we let the bifurcation parameter $\lambda$ be slightly above the critical value
at which bifurcation occurs.  In simulations, we use $\varepsilon=10^{-2}$. Initial conditions are chosen 
randomly in a small neighborhood of the unstable fully synchronous equilibrium.

\begin{figure}
	\centering
	\begin{subfigure}[b]{0.8\textwidth}
		\centering
		\includegraphics[width=\textwidth]{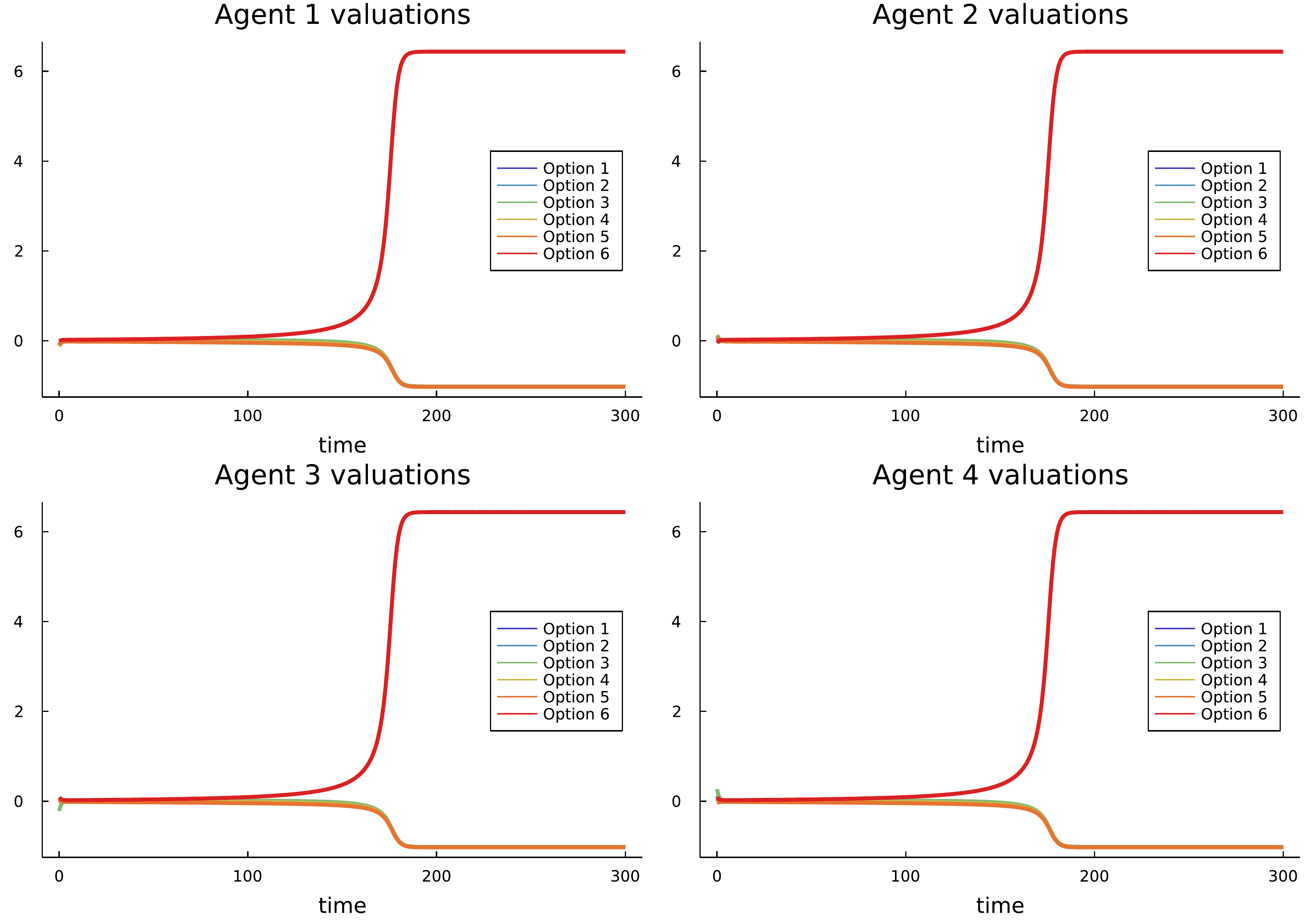}
		\caption{Evolution of valuations by 4 agents on 6 options at consensus symmetry-breaking. Simulation 
		of \eqref{eq: admissible value formation} with parameters~\eqref{e:consensus_bifurcation}.
		}
		\label{fig:4 6 cons time}
	\end{subfigure}
	\hfill
	\begin{subfigure}[b]{0.5\textwidth}
		\centering
		\includegraphics[width=\textwidth]{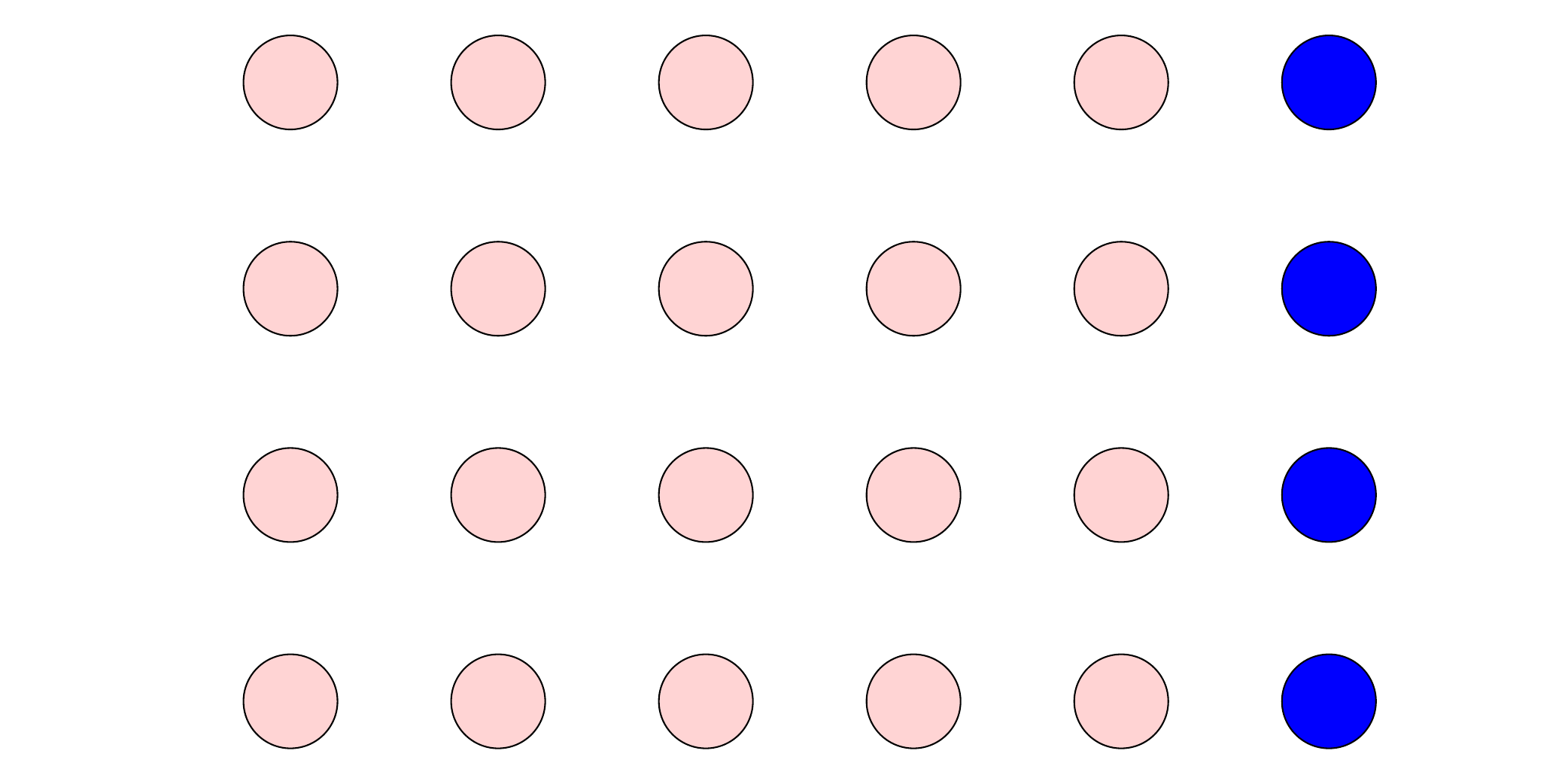}
		\caption{Final simulation pattern of valuations of 4 agents on 6 options at consensus symmetry-breaking. 
		Option 6 is chosen.}
		\label{fig:4 6 cons patt}
	\end{subfigure}
\caption{Possible consensus symmetry-breaking between 4 agents and 6 options. Transient value-assignment is shown through time series in (a). The final value pattern is shown in (b) and is of consensus type.}
\label{fig:4 6 cons}
\end{figure}

\begin{figure}
	\centering
	\begin{subfigure}[b]{0.8\textwidth}
		\centering
		\includegraphics[width=\textwidth]{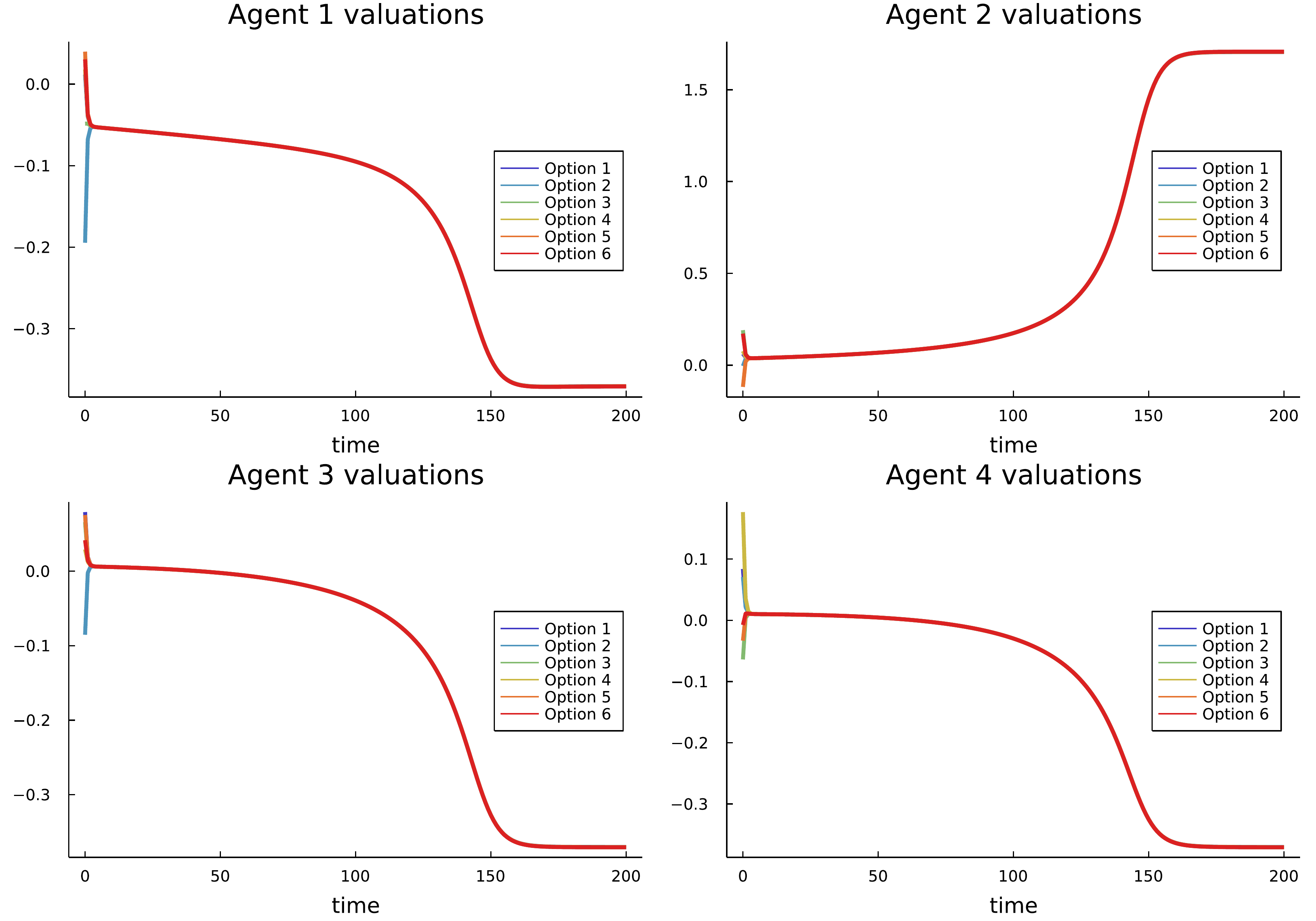}
		\caption{Evolution of valuation by 4 agents on 6 options at deadlock symmetry-breaking. Simulation 
		of \eqref{eq: admissible value formation} with parameters~\eqref{e:deadlock_bifurcation}.}
		\label{fig:4 6 deadl time}
	\end{subfigure}
	\begin{subfigure}[b]{0.5\textwidth}
		\centering
		\includegraphics[width=\textwidth]{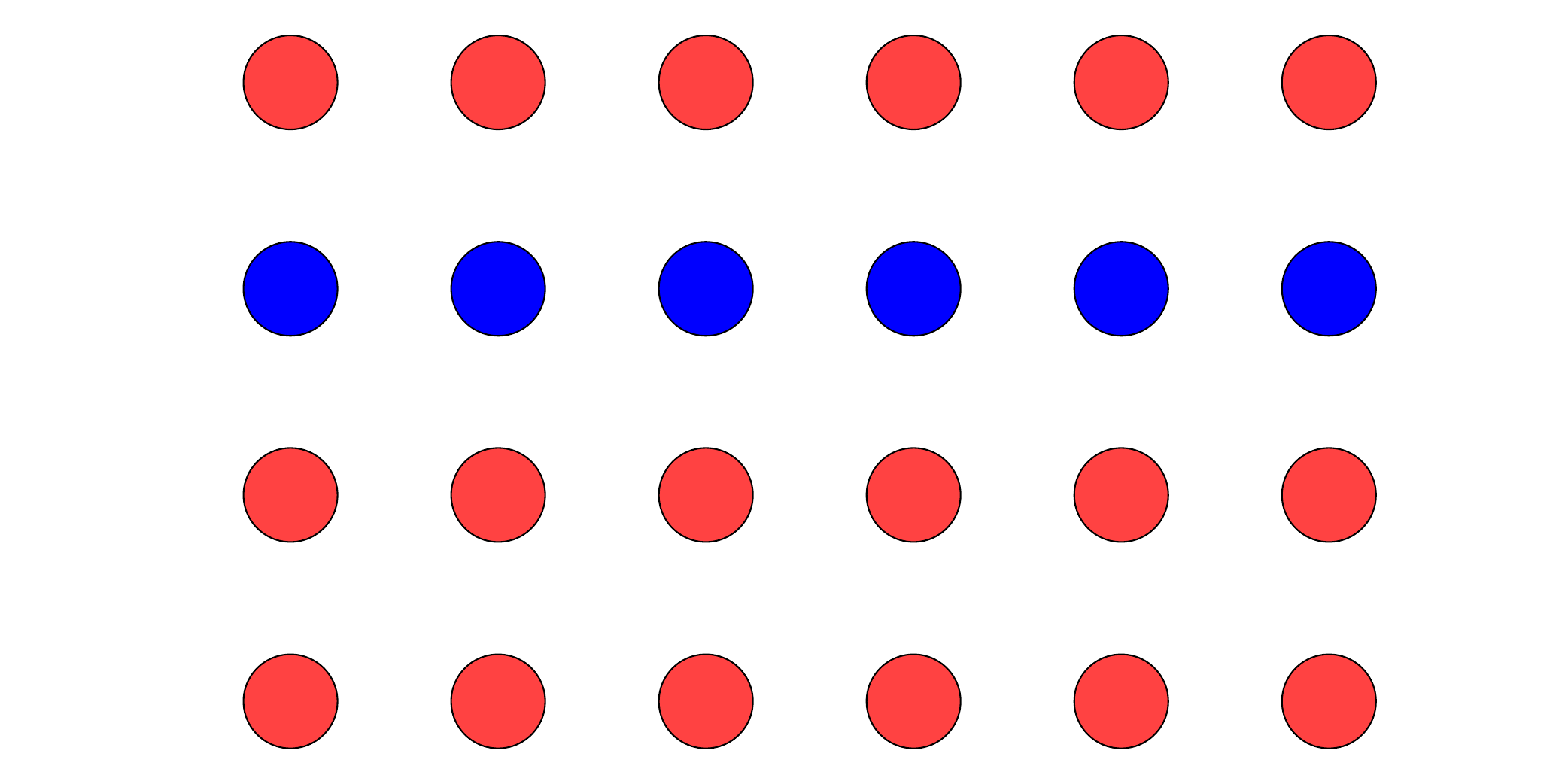}
		\caption{Final simulated value pattern by 4 agents on 6 options at deadlock symmetry-breaking. Agent 2 values all the options positively. Other agents value all the options negatively.}
		\label{fig:4 6 deadl patt}
	\end{subfigure}
	\caption{Possible deadlock symmetry-breaking pattern obtained by simulation of 4 agents on 6 options. Transient value-assignment is shown through time series in (a). The final value pattern is shown in (b) and is of deadlock type.}
	\label{fig:4 6 deadl}
\end{figure}

After exponential divergence from the neutral point (Figures~\ref{fig:4 6 cons time} and~\ref{fig:4 6 deadl time}), trajectories converge to a consensus (Figure~\ref{fig:4 6 cons patt}) or deadlock (Figure~\ref{fig:4 6 deadl patt}) value pattern, depending on the chosen bifurcation type. Observe that the value patterns trajectories converge to are `far' from the neutral point. In other words, the network `jumps' away from indecision to a value pattern with distinctly different value assignments compared to those before bifurcation. This is a consequence of model-independent stability properties of consensus and deadlock branches, as we discuss next. A qualitatively similar behavior would have been observed for $\varepsilon=0$, i.e., for $\lambda$ exactly at bifurcation, with the difference that divergence from the fully synchronous equilibrium would have been sub-exponential because of zero (instead of positive) eigenvalues of the linearization. Also, for $\varepsilon=0$, trajectories could have converged to different consensus or deadlock value patterns as compared to $\varepsilon=10^{-2}$ because of possible secondary bifurcations that are known to happen close to $\ES_n$-equivariant bifurcations and that lead to primary-branch switching~~\cite[Cohen and Stewart]{CS00}, \cite[Stewart {\it et al.}]{SEC03},\cite[Elmhirst]{E02}.

\subsection{Simulation of dissensus synchrony-breaking}
\label{S:simulation_dissensus}

We simulate dissensus synchrony-breaking for two sets of parameters:
\begin{eqnarray}
 \label{param1}
&&\qquad \tcd = 1.0,  \tcc = -1.0,  \tcdd = -0.5, \tcdl = -0.5, s_1 = 0.5,  s_2 = 0.3, \lambda = \frac{1}{\tcd}+\varepsilon \\
\label{param2}
&&\qquad \tcd = 1.0,  \tcc = -1.0,  \tcdd = -1.0, \tcdl = -1.0,  s_1 = -0.1, s_2 = -0.3, \lambda = \frac{1}{\tcd}+\varepsilon 
\end{eqnarray}
Initial conditions are chosen randomly in a small neighborhood of the unstable neutral point, and in simulations $\varepsilon=10^{-2}$. The resulting temporal behaviors and final value patterns are shown in 
Figures~\ref{fig:4 6 orb diss} and \ref{fig:4 6 exo diss}.

\begin{figure}
	\centering
	\begin{subfigure}[b]{0.8\textwidth}
		\centering
		\includegraphics[width=\textwidth]{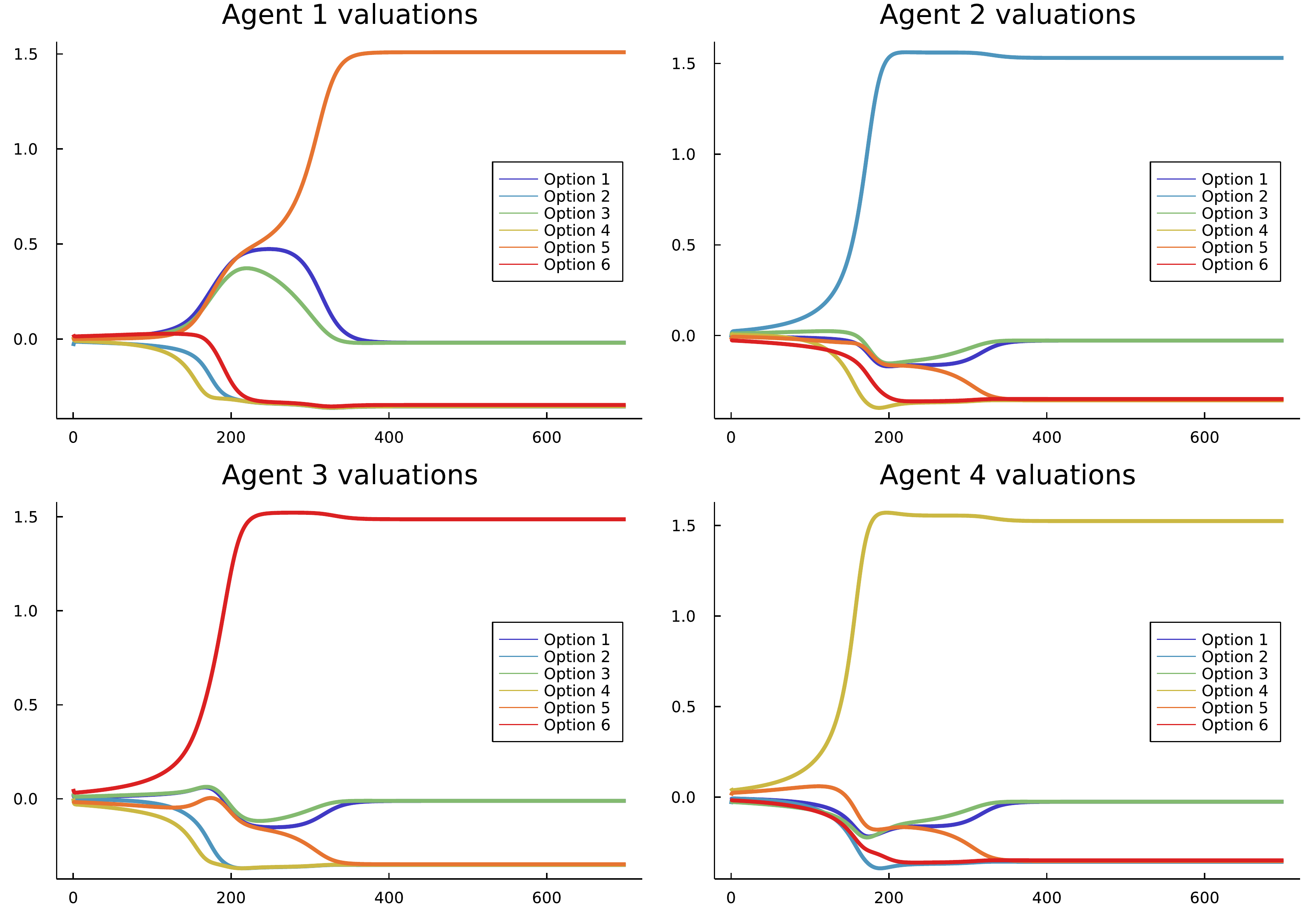}
		\caption{Evolution of 4 agents' valuations about 6 options at dissensus symmetry-breaking with 
		parameter set \eqref{param1}.}
		\label{fig:4 6 orb diss time}
	\end{subfigure}
	\hfill
	\begin{subfigure}[b]{0.5\textwidth}
		\centering
		\includegraphics[width=\textwidth]{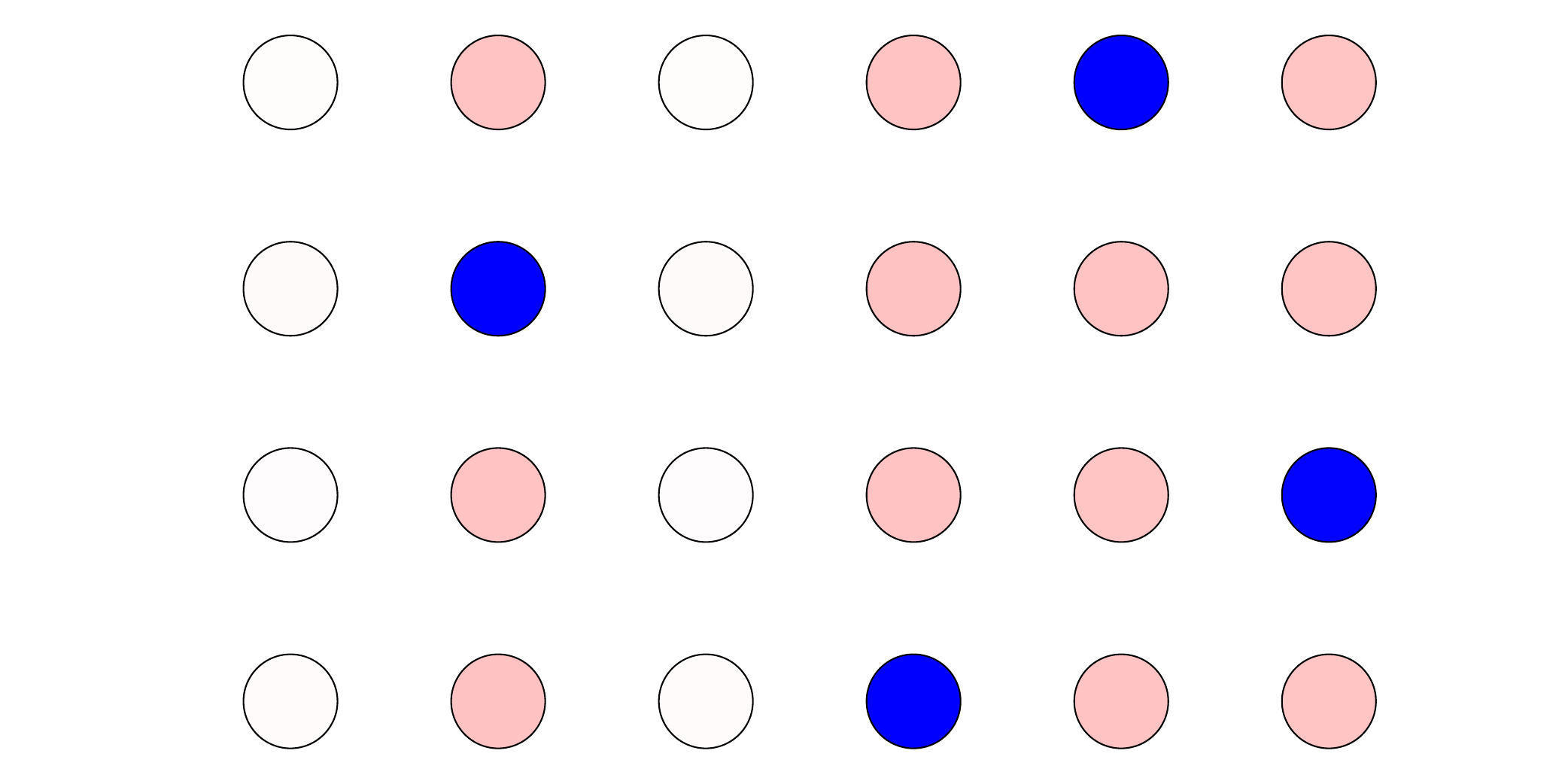}
		\caption{Final value pattern of 4 agents' valuations about 6 options at dissensus symmetry-breaking with parameter set \eqref{param1}. All agents are neutral about Options~1 and~3. Agent~1 favors Option~5. Agent~2 favors Option~2. Agent~3 favors Option~6. Agent~4 favors Option~4.}
		\label{fig:4 6 orb diss patt}
	\end{subfigure}
	\caption{Dissensus symmetry-breaking between 4 agents and 6 options  with parameter set  \eqref{param1}. Transient value-assignment is shown through time series in (a). The final value pattern is shown in (b) and is of orbital dissensus type.}
	\label{fig:4 6 orb diss}
\end{figure}

\begin{figure}
	\centering
	\begin{subfigure}[b]{0.8\textwidth}
		\centering
		\includegraphics[width=\textwidth]{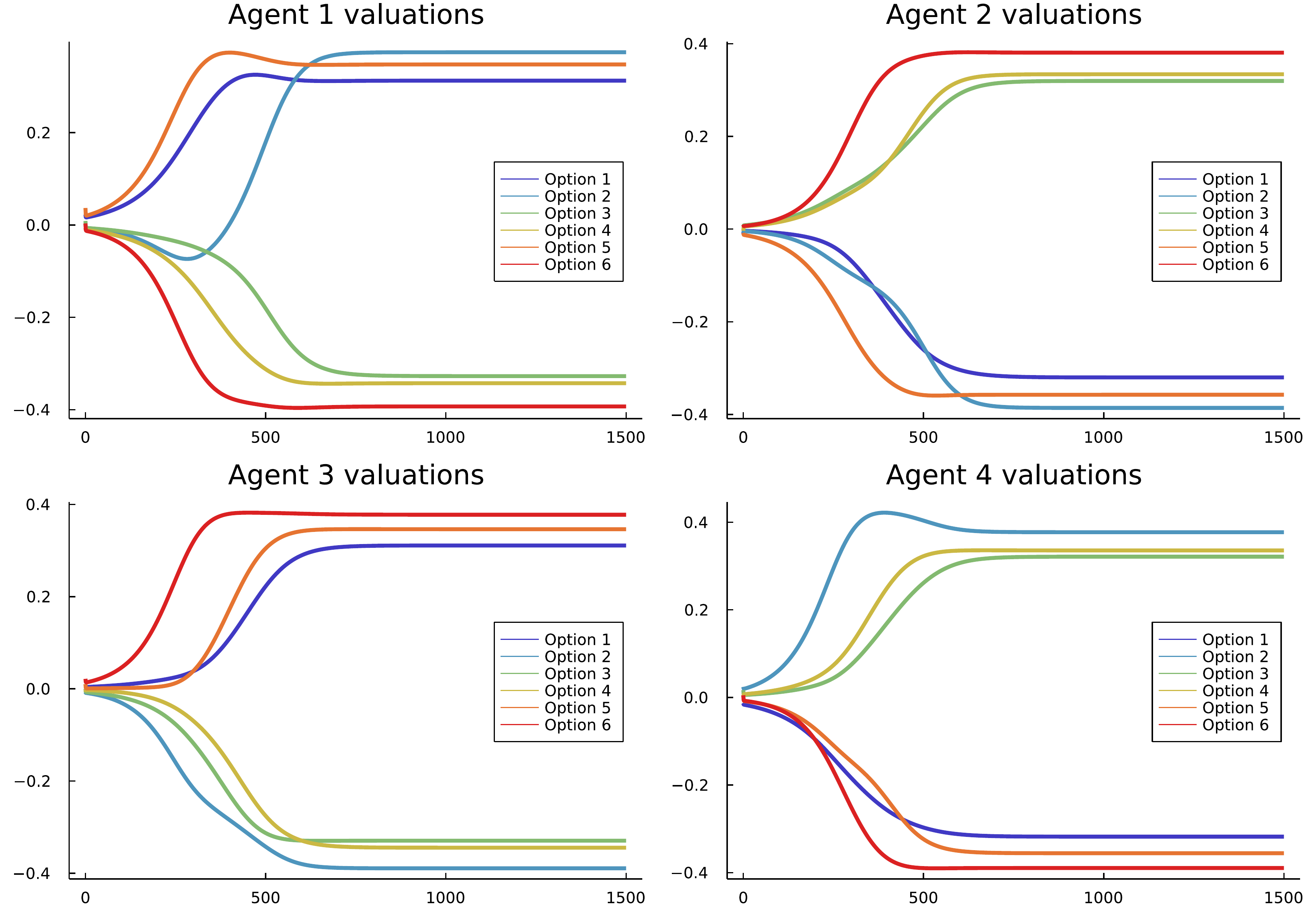}
		\caption{Evolution of 4 agents' valuations about 6 options at dissensus symmetry-breaking with 
		parameter set  \eqref{param2}.}
		\label{fig:4 6 exo diss time}
	\end{subfigure}
	\hfill
	\begin{subfigure}[b]{0.5\textwidth}
		\centering
		\includegraphics[width=\textwidth]{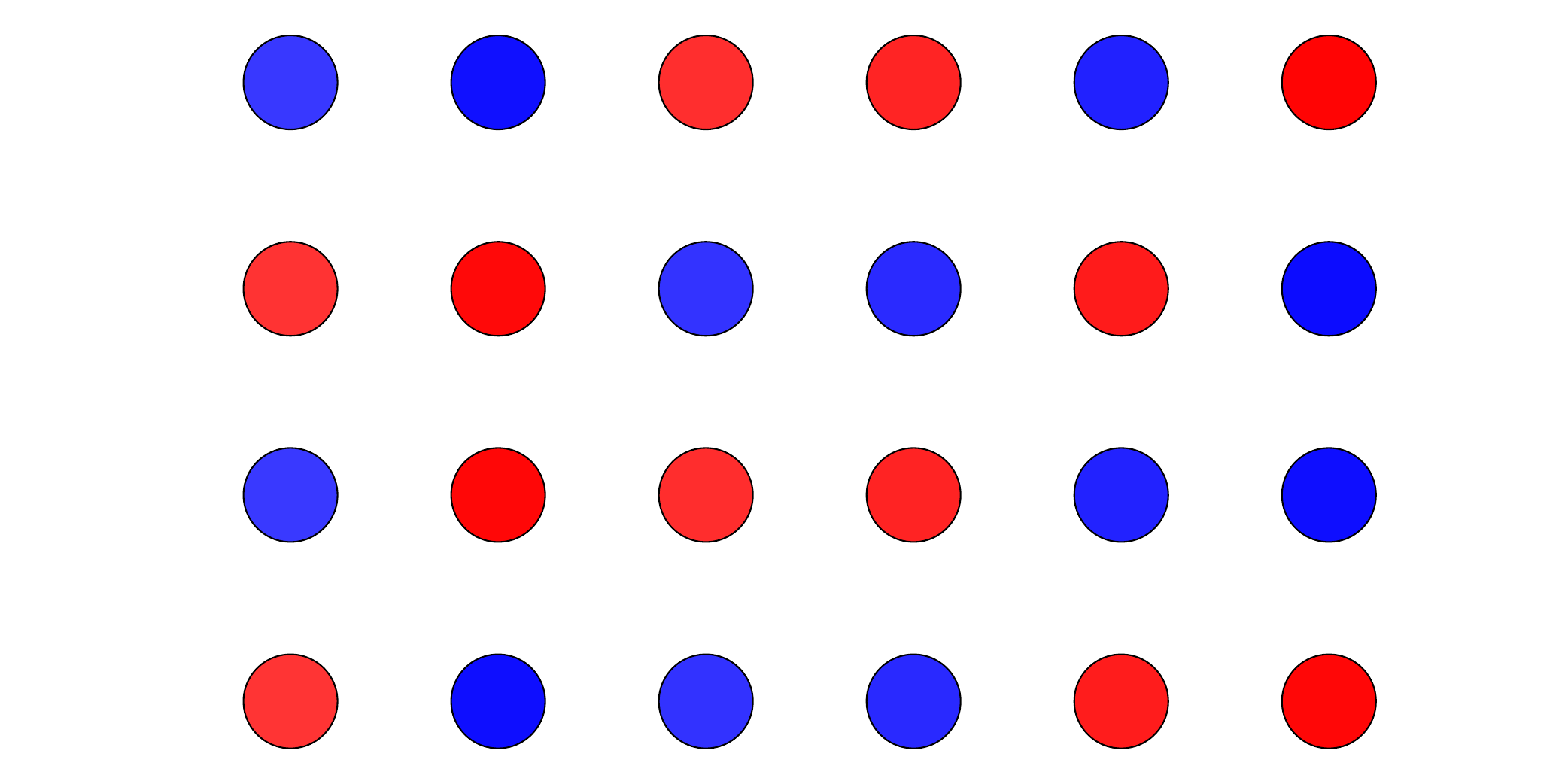}
		\caption{Final value pattern of 4 agents' valuations about 6 options at dissensus symmetry-breaking with parameter set \eqref{param2}. Agent~1 favors Options~1,2,5. Agent~2 favors Options~3,4,6. Agent~3 favors Options~1,5,6. Agent~4 favors Options~2,3,4.}
		\label{fig:4 6 exo diss patt}
	\end{subfigure}
	\caption{Dissensus symmetry-breaking between 4 agents and 6 options  with parameter set \eqref{param2}. Transient value-assignment is shown through time series in (a). The final value pattern is shown in (b) and is of exotic dissensus type.}
	\label{fig:4 6 exo diss}
\end{figure}

For both sets of parameters, trajectories jump toward a dissensus value pattern. For the first set of parameters (Figure~\ref{fig:4 6 orb diss}), the value pattern has a block of zeros (first and second columns) corresponding to options about which the agents remain neutral. The pattern in Figure~\ref{fig:4 6 orb diss patt} is orbital, with symmetry group $\Z_4$. For the second set of parameters (Figure~\ref{fig:4 6 exo diss}), each agent favors half the options and dislikes the other half, but there is disagreement about the favored options. The precise value pattern follows the rules of a Latin rectangle with two red nodes and two blue nodes per column and three red nodes and three blue nodes per row. The pattern in Figure~\ref{fig:4 6 exo diss patt} is exotic, and is a permutation of the pattern in Figure~\ref{F:4x6_exotic} (left). 

With parameter set \eqref{param2},
model~\eqref{eq: admissible value formation} has stable equilibria with synchrony patterns 
given by both exotic and orbital dissensus value patterns.
It must be stressed that here, for the same set of parameters but different initial conditions trajectories converge to value patterns corresponding to different Latin rectangles with the same column and row proportions of red and blue nodes, both exotic and orbital ones.
Therefore multiple (conjugacy classes of) stable states can coexist, even
when proportions of colors are specified.

\subsection{Stability of dissensus bifurcation branches}
\label{SS:stability_dissensus}

As discussed in Section~\ref{S:SCB} for consensus and deadlock bifurcations, 
solutions given by the Equivariant Branching Lemma are often unstable near bifurcation, but 
can regain stability away from bifurcation. The way this happens in dissensus bifurcations is 
likely to be similar to the way $\ES_N$ axial branches regain stability, that is, through saddle-node 
`turning-around' and secondary bifurcations. This observation is verified by simulation for 
both orbital and exotic axial solutions, Section~\ref{S:simulation_dissensus}. The 
analytic determination of stability or instability for dissensus axial solutions 
involves large numbers of parameters and is beyond our capability.
 
We know, by the existence of a non-zero quadratic equivariant~\cite[Section~6.1]{FGBL20}, 
that Ihrig's Theorem might apply to orbital dissensus bifurcation branches. We do not know 
if a similar result applies to exotic orbital dissensus bifurcation branches but our numerical 
simulations suggest that this is the case. Our simulations also show that both types of 
pattern can be stable in our model for values of the bifurcation parameter close to the 
bifurcation point.

\subsection{Stable equilibria can exist for any balanced coloring}
\label{SS:stability_balanced}

Furthermore, we show that whenever $\bowtie$ is a balanced
coloring of a network, there exists an admissible ODE having a linearly stable
equilibrium with synchrony pattern $\bowtie$. 

\begin{theorem}
	\label{T:BCequi_exist}
	Let $\bowtie$ be a balanced coloring of a network $\GG$. Then for any
	choice of node spaces there exists a $\GG$-admissible map $f$ such
	that the ODE $\dot x = f(x)$ has a linearly stable equilibrium 
	with synchrony pattern $\bowtie$.
\end{theorem}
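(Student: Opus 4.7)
The key observation is that admissibility permits (but does not require) each node's dynamics to use its inputs: a completely decoupled vector field, where every node runs the same single-node ODE, is a perfectly valid $\GG$-admissible map. This opens the door to a very short argument.

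Let $k$ be the number of colors used by $\bowtie$, and let $V$ denote the (fixed) common node space. I would first construct a smooth self-map $S:V\to V$ with $k$ hyperbolic asymptotically stable equilibria $s_1,\ldots,s_k\in V$, all distinct. If $V=\R$, take $S$ to be a polynomial of degree $2k-1$ with $2k-1$ simple zeros, so that the sign of $S'$ alternates and there are exactly $k$ linearly stable zeros. If $V=\R^d$ with $d\geq 1$, take $S=-\nabla U$ for a Morse function $U$ with $k$ strict non-degenerate local minima. Either construction is elementary.

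Next, define $f:\bigoplus_v V\to\bigoplus_v V$ by $f(x)_v = S(x_v)$ at every node $v$ of $\GG$. Since $\GG$ is homogeneous and the same function $S$ is applied at each node, and since $S$ is trivially invariant under every permutation of input arguments (it ignores them), $f$ is $\GG$-admissible. Put $x_0(v)=s_{c(v)}$, where $c(v)$ denotes the color of $v$ in $\bowtie$. Because the $s_i$ are pairwise distinct, the synchrony pattern of $x_0$ is exactly $\bowtie$, not a coarser coloring. One checks directly that $f(x_0)=0$ and that $Df(x_0)$ is block-diagonal with diagonal blocks $DS(s_{c(v)})$, each of which has all eigenvalues in the open left half-plane by the hyperbolic stability of the $s_i$. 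Hence $x_0$ is a linearly stable equilibrium of $\dot x=f(x)$ with synchrony pattern $\bowtie$.

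\textbf{Main obstacle.} There is essentially no technical obstacle: the proof reduces to the elementary fact that $S$ can be chosen to be multistable on any node space. The only conceptual point to flag is that the construction is genuinely decoupled and might seem to evade the network structure; this is, however, consistent with the definition of admissibility. If a fully coupled example is desired (and to emphasize robustness), one can perturb to $f_\epsilon(x)_v = S(x_v) + \epsilon\, h(x_v, \text{inputs})$ for any admissible coupling $h$ and $\epsilon>0$ small. By the hyperbolicity of $x_0$, the implicit function theorem produces a nearby equilibrium $x_\epsilon$, which by balance of $\bowtie$ necessarily lies in the flow-invariant subspace $\Delta_{\bowtie}$ and retains the synchrony pattern $\bowtie$, and the persistence of hyperbolicity preserves linear stability.
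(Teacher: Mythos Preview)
Your argument is essentially the same as the paper's: both construct a purely decoupled admissible map (each node's dynamics depending only on its own state) with prescribed stable equilibria at distinct values, one per color. The paper uses polynomial interpolation to arrange $f(y_d)=0$ and $Df(y_d)=-\mathrm{id}$ at each color value; you instead build a multistable $S$ via a degree-$(2k-1)$ polynomial or a Morse potential. These are cosmetic variations on the same idea.

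There is, however, one genuine gap. You write ``let $V$ denote the (fixed) common node space'' and ``since $\GG$ is homogeneous,'' but the theorem as stated is for an arbitrary network $\GG$ and \emph{any} choice of node spaces. In general $\GG$ has several input-equivalence classes $\KK$, each with its own node space $P_\KK$, and admissibility only requires that nodes in the \emph{same} class share dynamics. Your single map $S:V\to V$ applied uniformly to all nodes is not defined in this setting and need not be admissible. The paper's proof addresses exactly this point: it constructs a separate internal map $f^\KK:P_\KK\to P_\KK$ on each input-equivalence class (noting that balanced colorings refine input equivalence, so the colors partition cleanly across the $\KK$), and then assembles the global admissible map class by class. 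Your construction works verbatim once you make this per-class modification; the idea is unchanged, but as written the proof only covers the homogeneous case.
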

\begin{proof}
	Let $y$ be a generic point for $\bowtie$; that is, $y_c = y_d$ if and only if $c \bowtie d$.
	Balanced colorings refine input equivalence. Therefore
	each input equivalence class $\KK$ of nodes is a disjoint union of
	$\bowtie$-equivalence classes: $\KK = \KK_1 \dot\cup \cdots \dot\cup \KK_s$.
	If $c,d \in \KK$ then $y_c = y_d$ if and only if $c,d$ belong to the same $\KK_i$.
	Writing variables in standard order (that is, in successive blocks
	according to arrow-type) we may assume that $P_c = P_d$
	whenever $c \sim_I d$.
	
	Next, we define a map $f^\KK: P_\KK \to P_\KK$ such that
	\beqn
	f^\KK(y_d) &=& 0\quad \forall d \in \KK \\
	\mathrm{D}f^\KK(y_d) &=& -\id_d\quad  \forall d \in \KK
	\eeqn
	where $\id_d$ is the identity map on $P_d$.
	This can be achieved with a polynomial map by polynomial interpolation.
Now define $g:P\to P$ by
	\[
	g_c(x) = f^\KK(x_c) \quad \mbox{when}\ c \in \KK
	\]
	The map $g$ is admissible since it depends only on the node variable and its components on any 
	input equivalence class are identical. Now
	$g_c(y_d) = f^\KK(y_d) = 0$,
	so $y$ is an equilibrium of $g$. Moreover, $\mathrm{D}f|_y$ is a block-diagonal
	matrix with blocks $-\id_c$ for each $c \in \CC$; that is, 
	$\mathrm{D}f|_y = - \id_P$,
	where $\id_P$ is the identity map on $P$. The eigenvalues of $\mathrm{D}f|_y$
	are therefore all equal to $-1$, so the equilibrium $y$ is linearly stable.
\end{proof}

\end{document}